\documentclass[12pt,reqno]{amsart}
\usepackage[a4paper,margin=1in]{geometry}

\usepackage{microtype}
\usepackage[cal=euler,scr=boondoxo]{mathalfa}
\usepackage{tikz,tikz-cd}
\usetikzlibrary{decorations}
\usepackage{enumitem}

\usepackage[colorlinks,allcolors=blue]{hyperref}
\usepackage[capitalize]{cleveref}
\crefname{equation}{}{}

\input{commands}

\def\bi{\mathbf i}
\def\bn{\mathbf n}

\oper{QM}
\oper{loc}
\oper{edeg}
\oper{Frac}

\oper{Pole}
\oper{CT}
\def\setc#1#2{\set{#1\col#2}}

\oper{vect}
\def\d{\mathbf d} 
\def\qd{q^\d}

\def\NOP{normally ordered product\xspace}

\def\pr#1#2{#1^{(#2)}}
\def\ph#1{(#1;q)_\infty} 

\def\hh{\what H} 
\opr\N{\mathbf N} 

\def\hb#1#2{#1^{[#2]}}
\def\fat{\fa^\ts}

\def\bil{\rbr}
\def\W{\sW}

\def\bch{\mathfrak{ch}}
\def\top{\mathrm{top}}
\def\Zs{\bZ_*}
\oper{sh}

\opr\hom{\mathbf{Hom}}
\opr\iend{\mathbf{End}}

\oper{str}

\def\gr{\mathrm{gr}}

\def\F{\cF} 
\def\FG{\cF^g} 
\def\Fo{\cF_0} 

\def\BF{\phi} 
\def\BD#1{\phi^{[#1]}} 
\def\UD{\mathbf A} 
\def\AD{A} 

\def\loc{\mathrm{loc}}
\def\Ho{\cH^\circ} 
\def\D{\fD}
\def\Db{\D'}

\def\b#1{\boldsymbol{#1}}
\oper{Par}
\def\tD{\wtl D}
\def\vsi{\varsigma}

\begin{document}
\title{Quasi-modularity of $q$-traces and integrals over Hilbert schemes}

\author{Killian Hong-Minh}
\author{Sergey Mozgovoy}

\address{School of Mathematics, Trinity College Dublin, Dublin 2, Ireland
\newline\indent
Hamilton Mathematics Institute, Dublin 2, Ireland}

\email{hongmink@tcd.ie}
\email{mozgovoy@maths.tcd.ie}

\begin{abstract}
We study quasi-modularity of normalized $q$-traces on bosonic Fock spaces associated with finite-dimensional quadratic spaces and superspaces.
For a natural class of operators obtained from free-boson fields and their descendants, we prove that their normalized $q$-traces are quasi-modular, with weight bounded by the sum of the weights of the insertions.
As applications, we prove Qin's quasi-modularity conjecture for tautological integrals on Hilbert schemes of points of a surface with numerically trivial canonical class, obtain quasi-modularity of arbitrary zero-mode correlation functions in the Heisenberg vertex operator algebra, and give a new proof of the Bloch--Okounkov quasi-modularity theorem as a rank-one specialization of our general result.
\end{abstract}

\maketitle

\section*{Introduction}

Modular and quasi-modular forms arise throughout enumerative geometry,
representation theory, and the theory of vertex operator algebras.
A common source of this phenomenon is genus one:
geometric generating series can often be represented as $q$-traces, interpreted as torus correlation functions, on a suitable Fock space.
For free bosonic theories these correlation functions are governed by
the Heisenberg algebra, and their basic two-point functions are
expressed in terms of theta functions and Eisenstein series.

The purpose of this paper is to formulate a general quasi-modularity
theorem for such $q$-traces.
Let $H$ be a finite-dimensional quadratic space, or more generally a
quadratic superspace, and let $V=V(H)$ be the corresponding bosonic
Fock space.
We consider a natural class of admissible fields
obtained from the free boson fields and their descendants
\[
\BF_a(z)=\sum_{n\ne0}\frac{a_{-n}}n z^n,
\qquad
\BD{k}_a(z)=D^k\BF_a(z),
\qquad
D=z\dd_z,
\]
and admissible operators obtained by taking their constant coefficients.
For admissible fields
$\cT_1(z_1),\dots,\cT_r(z_r)$ we study the normalized
$q$-correlation functions
\[
\ta\rbr{\cT_1(z_1)\dots\cT_r(z_r)}
=\frac{\tr_q\rbr{\cT_1(z_1)\dots\cT_r(z_r)}}{\tr_q(1)}.
\]

Our main result has two levels.
At the level of fields, these $q$-correlation functions belong to a
filtered algebra of quasi-elliptic functions in the insertion
variables.
At the level of operators, taking simultaneous constant coefficients
produces quasi-modular forms.
Schematically,
\[
\text{Heisenberg $q$-correlation functions}
\longrightarrow
\text{quasi-elliptic functions}
\overset{\CT}{\longrightarrow}
\text{quasi-modular forms}.
\]
We prove that if $T_1,\dots,T_r$ are admissible operators of weights
at most $w_1,\dots,w_r$, then their normalized $q$-trace
is quasi-modular of weight at most $w_1+\cdots+w_r$.
We prove the same statement for a modified normalized $q$-trace
with an additional weight-zero exponential Heisenberg insertion;
this version is used in the Hilbert-scheme application.

The proof reflects the Gaussian nature of the free bosonic theory.
Normalized $q$-traces make the space of linear Heisenberg fields
Gaussian, so all higher $q$-correlation functions are determined by
pairwise contractions.
For bosonic descendants these contractions are logarithmic derivatives
of the Jacobi theta function together with Eisenstein series.
This gives the quasi-elliptic field-level statement, while a filtered
constant term theorem preserves the weight bound and yields
quasi-modularity.

This point of view is closely related to genus-one correlation
functions in vertex operator algebras.
Zhu \cite{zhu_modular} established foundational modularity results
for genus-one trace functions,
while Mason--Tuite \cite{mason_torus}
obtained explicit genus-one $r$-point functions for free bosonic and
lattice vertex operator algebras.
Our emphasis is different: we organize Heisenberg $q$-correlation
functions into a filtered algebra of quasi-elliptic functions and study their constant coefficients,
obtaining quasi-modularity statements for $q$-traces of arbitrary products of admissible operators.

We give three applications.

\medskip
\clm1
\textit{Qin's conjecture for Hilbert schemes.}
Let $X$ be a smooth projective surface.
The Nakajima--Grojnowski construction identifies
\[
\bigoplus_{n\ge0}H^*(X^{[n]},\bQ)
\]
with the Fock representation of the Heisenberg algebra associated with
the quadratic superspace $H^*(X,\bQ)$.
Thus intersection theory on all Hilbert schemes $X^{[n]}$ is encoded
in a single, generally higher-rank, Heisenberg algebra representation.

For vector bundles $E_1,\dots,E_r$ on $X$, consider the generating
series
\[
\ang{\ch_{k_1}^{E_1}\dots\ch_{k_r}^{E_r}}
=\sum_{n\ge0}q^n\int_{\hb Xn}
\ch_{k_1}(\hb En_1)\dots\ch_{k_r}(\hb En_r)\, \fe_t(T_{\hb  Xn}),
\]
where $\fe_t(F)=\prod_i(t+x_i)$
is the reverse Chern polynomial
of a vector bundle $F$ with Chern roots $(x_i)_i$.
For example, for $r=0$,
we have
\[\ang{}=\sum_{n\ge0}\hi(\hb Xn)q^n=\ph q^{-\hi(X)},
\]
where $\ph z=\prod_{i\ge0}(1-zq^i)$ is the $q$-Pochhammer symbol and
$\hi(X)$
is the Euler characteristic of~$X$.
Define the reduced (or normalized) series
\[
\ang{\ch_{k_1}^{E_1}\dots\ch_{k_r}^{E_r}}'
={\ang{\ch_{k_1}^{E_1}\dots\ch_{k_r}^{E_r}}}/\ang{}.
\]
For line bundles $E_i$ and $t=1$,
it was conjectured by Okounkov \cite{okounkov_hilbert}
that this series is a multiple $q$-zeta value,
and it was conjectured by Qin \cite{qin_quick,alhwaimel_hilbert}
that this series is quasi-modular if $K_X$ is numerically trivial.
See \cite{carlsson_vertex,carlsson_vertexa,alhwaimel_qins,alhwaimel_equivariant} for related results.
For a numerically trivial $K_X$ and arbitrary vector bundles $E_i$,
our results imply that $\ang{\ch_{k_1}^{E_1}\dots\ch_{k_n}^{E_n}}'$
is quasi-modular of weight at most $\sum_i(k_i+2)$.

While preparing this manuscript, we became aware of the preprint
\cite{alekseev_qins}, which independently proves Qin's conjecture
using closely related Wick and constant-term methods, directly in the
Hilbert-scheme setting.
We were also informed by Dominic Joyce that he has an alternative method of proving quasi-modularity of integrals over Hilbert schemes.

\medskip
\clm2
\textit{Zero-mode correlation functions of the Heisenberg vertex
operator algebra.}
Let $V=V(H)$ be the Heisenberg vertex operator algebra associated with
an arbitrary finite-dimensional quadratic space $H$.
For a homogeneous state $v\in V_d$, let
\[
o(v)=v_{(d-1)}
=[z^0]Y(z^{L_0}v,z)
\]
be its zero mode.
We prove that the admissible operators considered in this paper are
precisely the image of the zero-mode map $o:V\to\End(V)$.
Moreover, the admissible fields are precisely the degree-zero fields
$Y(z^{L_0}v,z)$.

It follows that for arbitrary states $v_1,\dots,v_r\in V$,
the zero-mode $r$-point correlation function
\[
\ta\bigl(o(v_1)\dots o(v_r)\bigr)
\]
is quasi-modular.
For $r=1$ this was proved by
Dong--Mason--Nagatomo \cite{dong_quasi}; our theorem treats arbitrary
products of zero modes together with their natural weight filtration.

\medskip
\clm3
\textit{Bloch--Okounkov quasi-modularity results.}
Bloch and Okounkov \cite{bloch_character} introduced $r$-point
correlation functions on the charge-zero infinite wedge and proved
quasi-modularity of $q$-brackets of shifted symmetric functions.
Under the boson--fermion correspondence, the charge-zero infinite
wedge is the \emph{rank-one} bosonic Fock space,
which in turn is isomorphic to the algebra~$\La$ of symmetric functions, with the basis consisting of Schur functions $\ket\la=s_\la$ for partitions $\la\in\Par$.

The correlation functions of Bloch and Okounkov form a distinguished rank-one specialization of the Heisenberg $q$-correlation functions considered in this paper.
For a function $f:\Par\to\bQ$ one may consider the operator $\hat f$ on $\La$ given by
\[
\hat f\ket\la=f(\la)\ket\la.
\]
Then the $q$-bracket of $f$ is exactly the normalized $q$-trace:
\[
\ang f_q
=\frac{\sum_\la f(\la)q^{\n\la}}{\sum_\la q^{\n\la}}
=\ta(\hat f).
\]
In \cite{bloch_character} one introduces functions
$Q_k:\Par\to\bQ$, which generate the algebra
$\La^*=\bQ[Q_2,Q_3,\dots]$ of shifted symmetric functions,
with $\wt(Q_k)=k$.
One proves that the $q$-brackets $\ang{Q_{k_1}\dots Q_{k_r}}_q$
are quasi-modular.
We will show that the corresponding operator $\what Q_k$ on the Fock
space $\La$ is admissible of weight at most $k$.
Therefore
$\ta(\what Q_{k_1}\dots\what Q_{k_r})=\ang{Q_{k_1}\dots Q_{k_r}}_q$
is quasi-modular of weight at most $\sum_i k_i$.
This gives a new proof of the quasi-modularity, together with the standard upper weight bound, of Bloch–Okounkov $q$-brackets.

\medskip

The paper is organized as follows.
We first develop the Heisenberg and Fock-space formalism and the
corresponding normalized $q$-traces and $q$-correlation functions.
We then recall the Nakajima description of Hilbert-scheme cohomology
and the operators of Li--Qin--Wang and Carlsson--Okounkov.
Next we prove Gaussianity of the Heisenberg $q$-correlation functions,
construct the filtered quasi-elliptic algebras, and establish the
constant term and quasi-modularity theorems.
We conclude with the three applications above: Qin's conjecture,
zero-mode correlation functions, and the Bloch--Okounkov theorem.

\subsection*{AI Declaration}
An LLM was used for exploratory discussions and editorial assistance during the preparation of this paper.

\subsection*{Acknowledgments}
The authors would like to thank
Dominic Joyce and Jan Manschot for many useful discussions.

\section{Fock spaces, fields and \tpdf{$q$}{q}-traces}

\subsection{Heisenberg algebra and Fock space}
Let $K$ be a field of characteristic zero.
A quadratic space over $K$ is a \fdd vector space $H$
equipped with a non-degenerate symmetric bilinear form $(-,-)$.
The \idef{Heisenberg algebra} of $H$ is the graded Lie algebra
\begin{equation}
\hh
=H[t^{\pm1}]\oplus K\bc
\end{equation}
with the central element $\bc$ and the Lie bracket
\begin{equation}\label{H-bracket}
[a_m,b_n]=m\de_{m,-n}(a,b)\bc,\qquad
a_m=at^m,\,b_n=bt^n,
\end{equation}
where $a,b\in H$ and $m,n\in\bZ$.
The degree on $\hh$ (also called conformal weight or energy)
is defined by
\[\deg a_n=-n,\qquad \deg\bc=0.\]

Let $\hh_+=H[t]$ and $\hh_-=t\i H[t\i]$.
Define the (bosonic) \idef{Fock space} to be the $\hh$-module
\begin{equation}
V=V(H)=\Ind_{\hh_+\oplus K\bc}^{\hh}K\iso S(\hh_-),
\end{equation}
where $\hh_+$ acts as zero
and $\bc$ acts as identity on $K$.
Since the pairing on $H$ is non-degenerate,
the module $V$ is irreducible over $\hh$.
The element $\one=\ket0\in V$ corresponding to $1\in K$ is called
the \idef{vacuum}.
The grading on $\hh$ induces the grading
\[V=\bop_{n\ge0}V_n,\]
where all $V_n$ are \fdd and $V_0=K\one$.
The operators $a_n$ for $n\ge0$ annihilate $\one$, and are called annihilation operators.
The operators $a_n$ for $n<0$ are called creation operators.

\begin{remark}
Let $P(V,q)=\sum_{n\ge0}\dim(V_n)q^n$
be the Poincar\'e series of $V$.
If $d=\dim H$,
then $P(Ht^{-n},q)=dq^n$, hence $P(S(Ht^{-n}),q)=\frac1{(1-q^n)^d}$.
Therefore
\begin{equation}
P(V,q)
=\prod_{n\ge1}\frac1{(1-q^n)^d}
=\frac1{\ph q^d}.
\end{equation}
\end{remark}

\begin{lemma}\label{rho}
The map
\[\rho_n:Ht^n\to\End(V),\qquad at^n\mto a_n,\]
is injective for all $n\ne0$.
Moreover, $\rho_0=0$.
\end{lemma}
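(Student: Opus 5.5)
The plan is to use the explicit realization $V\iso S(\hh_-)$ of the Fock space, together with the commutation relations \eqref{H-bracket}.

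\textbf{The case $n=0$.} By \eqref{H-bracket}, $[a_0,b_m]=0$ for all $b\in H$ and $m\in\bZ$, so $a_0$ commutes with every creation operator $b_m$, $m<0$. Since $a_0\in\hh_+$, it also annihilates $\one$. Every element of $V$ is a linear combination of monomials $b^1_{m_1}\cdots b^k_{m_k}\one$ with all $m_i<0$. Moving $a_0$ past the creation operators therefore gives $a_0 v=0$ for every $v\in V$, that is, $\rho_0=0$.

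\textbf{The case $n<0$.} Take $a\ne0$. Then $a_n\one$ is the element $at^{n}\in\hh_-\subset S(\hh_-)$, which is nonzero. Hence $\rho_n(at^n)\ne0$. Since $\rho_n$ is linear, it is injective.

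\textbf{The case $n>0$.} Take $a\ne0$. By non-degeneracy of the form, choose $b\in H$ with $(a,b)\ne0$. Consider $b_{-n}\one\in V_n$. Since $a_n\one=0$, \eqref{H-bracket} gives
\[
a_nb_{-n}\one=[a_n,b_{-n}]\one=n(a,b)\one.
\]
This is nonzero because $n\ne0$ and $K$ has characteristic zero. So $a_n\ne0$, and $\rho_n$ is injective.

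The only point requiring any care is the case $n>0$. There injectivity genuinely uses non-degeneracy of the pairing, together with $n\ne0$ in characteristic zero.
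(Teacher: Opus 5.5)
Your proof is correct and follows the paper's argument for $n<0$ and $n>0$. The paper phrases the $n>0$ case contrapositively: if $a_n=0$, then $0=a_nb_{-n}\one=n(a,b)\one$ for all $b\in H$, so $a=0$ by non-degeneracy. You also spell out why $\rho_0=0$ (the operator $a_0\in H[t]$ kills $\one$ and commutes with every $b_m$), which the paper states without proof.
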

\begin{proof}
If $n<0$ and $a\ne 0$, then $a_n\one\ne0$.
Assume that $n>0$ and $a_n=0$.
For all $b\in H$, we have
$0=a_nb_{-n}\one=[a_n,b_{-n}]\one=n(a,b)\one$, hence $(a,b)=0$.
Therefore $a=0$.
\end{proof}

\subsection{Graded vector spaces}
Given a category $\cC$ and a set $I$,
let $\cC^I=[I,\cC]$ denote the category of $I$-graded objects in~$\cC$,
with the objects being collections
$X=(X_i\in\cC)_{i\in I}$ (also written as $\bop_{i\in I}X_i$)
and morphisms
\[\Hom_{\cC^I}\rbr{X,Y}
=\prod_{i\in I}\Hom_\cC(X_i,Y_i).\]
If $\cC$ is an additive category (or has an initial object $0$),
define the restricted product $\cC^{(I)}$ to be the full subcategory of $\cC^I$ with objects $\bop_{i\in I}X_i$ such that $X_i\iso0$ for all but finitely many $i\in I$.

If $\Ga$ is a (cancellative) commutative monoid, $X\in\cC^\Ga$ and $n\in \Ga$,
define the shift
\[X[n]\in\cC^\Ga,\qquad X[n]_i=X_{n+i},\, i\in \Ga.\]

In particular, let $\Vect_K$ be the category of vector spaces over a field $K$ and let $\vect_K$ be the
category of \fdd vector spaces over $K$.
Then the category $\Vect_K^\Ga$ is a closed symmetric monoidal category,
with tensor products, internal Homs and symmetry defined by
\begin{gather}
V\ts W=\bop_{n\in\Ga}(V\ts W)_n,\qquad
(V\ts W)_n=\bop_{i+j=n}V_i\ts W_{j},\\
\hom(V,W)=\bop_{n\in\Ga}\hom_n(V,W),\qquad
\hom_n(V,W)
=\prod_{i\in\Ga}\Hom_K(V_i,W_{n+i}),\\
\si:V\ts W\to W\ts V,\qquad a\ts b\mto b\ts a.
\end{gather}
Define the $\Ga$-graded algebra $\iend(V)=\hom(V,V)$.
The subcategory $\vect_K^{(\Ga)}\sbs\Vect_K^\Ga$ is also closed monoidal.
For $V,W\in\Vect_K^\Ga$, we have
\[\Hom_{\Vect_K^\Ga}(V,W)=\hom_0(V,W)=\prod*_{i\in\Ga}\Hom_K(V_i,W_i)\]
which is different from the Hom space $\Hom_K(V,W)\iso\prod_{i\in\Ga}\Hom_K(V_i,W)$
in the category $\Vect_K$.
To avoid confusion, we will denote the former by $\hom_0(V,W)$ and the latter by $\Hom_K(V,W)$.
A $\bZ$-graded or $\bN$-graded vector space will be called graded.

\subsection{Superspaces}
For more details on superspaces see \eg
\cite[p.~41]{deligne_quantum}.
Consider the group $C_2=\bZ/2\bZ$.
The category $\Vect_K^{C_2}$ (resp.\ $\vect_K^{C_2}$) is called the category of superspaces (resp.\ \fdd superspaces).
Given a superspace $V=\bop_{i\in C_2}V^i$ and $a\in V^i$,
we call $p(a)=i$ the parity (or $C_2$-degree) of $a$.
Whenever we write $p(a)$, we automatically assume that~$a$ is homogeneous.
The elements of $V^{\b0}$ are called even
and the elements of $V^{\b1}$ are called odd.
The category $\Vect_K^{C_2}$ is equipped with symmetry
(the Koszul sign rule)
\[\si:V\ts W\to W\ts V,\qquad a\ts b\mto(-1)^{p(a)p(b)}b\ts a.\]
In what follows,
we write $\ep(a,b)=(-1)^{p(a)p(b)}$ and automatically assume that $a,b$ are homogeneous.
An (associative, unital) superalgebra $A$ is a monoid in the category $\Vect_K^{C_2}$.
It is called (super) commutative if the product $\mu:A\ts A\to A$ satisfies $\mu\si=\mu$.
If $A$ is a superalgebra, then it has a (super) Lie bracket
\eq{\label{super Lie}
[a,b]=(\mu-\mu\si)(a\ts b)
=ab-(-1)^{p(a)p(b)}ba.}

A pairing $\be=(-,-):V\ts V\to K$ is called super-symmetric if $\be\si=\be$, meaning that
\[(a,b)=(-1)^{p(a)p(b)}(b,a).\]
It is called even if it has degree zero, meaning that
\[
(a,b)\ne0\imp p(a)=p(b).
\]
A pairing is even and super-symmetric \ifft
\[(a,b)=(-1)^{p(a)}(b,a).\]

Define a quadratic superspace over $K$ to be a superspace $H\in\vect_K^{C_2}$ equipped with a non-degenerate,
even and super-symmetric bilinear form $(-,-)$.
The Heisenberg algebra $\hh=H[t^{\pm1}]\oplus K\bc$
is a (super) Lie algebra
equipped with the bracket~\cref{H-bracket}.
The Fock space
\[V=V(H)=\bop_{n\in\bN}\bop_{i\in C_2}V_n^i
\iso S(\hh_-)\iso S(\hh_-^{\b0})\ts \La(\hh_-^{\b1})
\]
is both $\bN$-graded and $C_2$-graded (an $\bN$-graded superspace).
It is a module over $\hh$ so that
\[[a_m,b_n]=a_mb_n-(-1)^{p(a)p(b)}b_na_m=m\de_{m,-n}(a,b)\id_V\]
in $\iend(V)\in\Vect_K^{\bZ\xx C_2}$.
The (super) Poincar\'e series of $V$ is
\[P(V,q)=\sum_{n\ge0}(\dim V_n^{\b0}-\dim V_n^{\b1})q^n
=\ph q^{-\dim H^{\b0}+\dim H^{\b1}}.
\]

\subsection{Graded fields}
Given a vector space $V$,
let
\eq{V\lser z
=\sets{\sum*_{n\in\bZ}v_nz^n}{v_n\in V,\, v_n=0\text{ for }n\ll0}
\iso \bigcup_N\prod_{i\ge N}Vz^i.
}
Define the \idef{space of fields} on $V$ to be
\eq{\cF(V)=\Hom_K(V,V\lser z).}
If $V=\bop_{i\in\bZ}V_i$ is a bounded below graded vector space,
we define $\deg z=-1$ and
\eq{V^\gr\lser z
=\bop_{d\in\bZ}\rbigg{\prod_{i\in\bZ}V_{d+i}z^{i}}\sbs V\lser z.
}
We define the \idef{space of graded fields} on $V$
\begin{gather}
\FG(V)
=\hom(V,V^\gr\lser z)
=\bop_{d\in\bZ}\F_d(V),\\
\F_d(V)
=\sets{\sum*_{n\in\bZ} f_nz^n:V\to V\lser z}
{\deg f_n-n=d\ \forall n}.
\end{gather}
In particular, let
\[\Fo(V)
=\sets{\sum*_{n\in\bZ} f_nz^n:V\to V\lser z}
{\deg f_n=n\ \forall n}\iso\prod*_{n\in\bZ}\iend_n(V)
\]
be the space of degree zero fields.

\begin{example}
Let $V=V(H)$ be the Fock space over the Heisenberg algebra $\hh$.
For every $a\in H$, the Heisenberg field
\eq{
Y(a,z)=\sum_{n\in\bZ}a_n z^{-n-1}
}
has degree $1$, since $\deg(a_nz^{-n-1})=-n+(n+1)=1$.
The \idef{free boson field} (without the zero mode)
\eq{\label{free bos}
\BF_a(z)=\sum_{n\ne0}\frac{a_{-n}}{n}z^n}
has degree zero.
More generally, the fields
\eq{\label{descend}
\BD k_a(z)=D^k\BF_a(z)
=\sum_{n\ne0}a_{-n}n^{k-1}z^n,\qquad
k\ge0,\, D=z\dd_z,
}
have degree zero.
We call them bosonic descendants of $\BF_a(z)$.
\end{example}

\subsection{Polarized algebras and normally ordered products}
In this section we develop a purely algebraic framework for \NOP{s}. The basic issue addressed by normal ordering is that arbitrary fields in $\cF(V)$ need not be composable.
A \NOP provides a canonical way to reorder the factors so that their composition is well defined.
We introduce \NOP{s} in a more general setting than $\F(V)$, isolating only the algebraic structure needed for their definition.

First, let $A=A_+\oplus A_-$ be an associative algebra, where $A_{\pm}$ are (not necessarily unital) subalgebras.
Every $a\in A$ has a unique decomposition $a=a_++a_-$
with $a_\pm\in A_\pm$ (sometimes we write $a^\pm$ instead).
Given $a_1,\dots,a_n\in A$,
define their \idef{\NOP}
\begin{equation}\label{nop}
\N(a_1\ts\dots\ts a_n)
=\nop{a_1\dots a_n}
:=\sum_{I\sqcup J=[n]}a^+_{I}a^{-}_{J},
\end{equation}
where
$a^\pm_{I}=a^\pm_{i_1}\dots a^\pm_{i_k}\in A$ for $I=\set{i_1<\dots<i_k}\sbs [n]$.

\begin{remark}
Note that traditionally the \NOP is defined by
\[\lb{\nop{ab}}=a_+b+ba_-\]
and then extended recursively.
See \eg \cite[2.3.5]{kac_vertex}, \cite[2.2.2]{frenkel_vertex},
\cite[3.8.5]{lepowsky_introduction}.
We have $\nop{ab}-\lb{\nop{ab}}=[a_-,b_-]$.
This commutator vanishes for Heisenberg fields.
\end{remark}

\begin{definition}
A \idef{polarized algebra} is a pair $(A_+,A_-)$ of associative (not necessarily unital) algebras together with a linear map
$\mu:A_+\ts A_-\to A=A_+\oplus A_-$
such that
\begin{enumerate}
\item the projection $\mu_+:A_+\ts A_-\to A_+$ is a morphism of left $A_+$-modules.
\item the projection $\mu_-:A_+\ts A_-\to A_-$ is a morphism of right $A_-$-modules.
\end{enumerate}
We will usually refer to $A=A_+\oplus A_-$ as a polarized algebra
and denote $\mu(a, b)$ by $ab$.
We assume that $A_+$ is unital and $\mu(1,a)=a$ for all $a\in A_-$.
Then the \NOP~\cref{nop} induces a linear map
\[\N:\bop_{n\ge0}A^{\ts n}\to A.\]
\end{definition}

\begin{remark}
A polarized superalgebra is defined in the same way.
The \NOP in $A=A_+\oplus A_-$ is defined by
\[
\N(a_1\ts\dots\ts a_n)
=\nop{a_1\dots a_n}
:=\sum_{I\sqcup J=[n]}\ep(I,J,a) a^+_{I}a^{-}_{J},
\]
where $\ep(I,J,a)=\prod_{i\in I,j\in J,i>j}\ep(a_i,a_j)$.
\end{remark}

For (even) $a\in A$, define the \idef{normally ordered exponential}
(assuming that $K$ has characteristic zero)
\begin{equation}
\nop{e^{ta}}
=e^{ta_+}e^{ta_-}
=\sum_{n\ge0}\frac{t^n}{n!}
\nop{\underbrace{a\dots a}_n}\in A\pser t.
\end{equation}

\begin{example}\label{gr alg pol}
Given a graded algebra $A=\bop_{n\in\bZ}A_n$,
define its \idef{degree polarization} by
\begin{equation}\label{super-polar}
A_+=\bop_{n\ge0}A_n,\qquad A_-=\bop_{n<0}A_n.
\end{equation}
Every $a\in A$ can be uniquely written in the form $a=\sum_{n\in\bZ}a_n$, where $a_n\in A_n$.
Then $a_+=\sum_{n\ge0}a_n$ and $a_-=\sum_{n<0}a_n$.
\end{example}

\begin{example}
Let $\F(V)$ be the space of fields on a vector space $V$.
It is equipped with the \idef{field polarization} $\F(V)=\F(V)_+\oplus \F(V)_-$ given by
\begin{equation}\label{field-polar}
\F(V)_+=\Hom(V,V\pser z),\qquad
\F(V)_-=\Hom(V,z\i V[z\i]).
\end{equation}
Both $\F(V)_\pm$ are algebras and there is a well-defined
composition product
\[\F(V)_+\ts \F(V)_-\to \F(V).\]
In contrast, the composition $\F(V)_-\ts \F(V)_+\to \F(V)$ is not well-defined
in general.
\end{example}

\begin{example}
Let $V=\bop_{i\in \bZ}V_i$ be a bounded below graded vector space.
Then the spaces of graded fields $\FG(V)$ and degree zero fields $\Fo(V)$ inherit polarization from $\F(V)$.
Thus
\begin{gather*}
\Fo(V)_+
=\sets{\sum_{n\ge0}f_nz^n:V\to V\lser z}{\deg f_n=n\ \forall n}
\iso\prod_{n\ge0}\iend_n(V),\\
\Fo(V)_-
=\sets{\sum_{n<0}f_nz^n:V\to V\lser z}{\deg f_n=n\ \forall n}
\iso\prod_{n<0}\iend_n(V).
\end{gather*}
Note that this polarization is compatible
with the polarization of the graded algebra $\iend(V)\sbs\prod_{n\in\bZ}\iend_n(V)\iso\Fo(V)$
from \cref{gr alg pol}.
\end{example}

\begin{lemma}\label{exp field}
Let $V$ be a bounded below graded vector space
and let $f=\sum_{n\ne0} f_nz^n\in\Fo(V)$.
Then
\[\nop{e^f}=e^{f_+}e^{f_-}\]
is a well-defined field of degree zero.
\end{lemma}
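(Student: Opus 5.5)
We need to show that $e^{f_+}$ and $e^{f_-}$ are well-defined degree-zero fields and that their composition $e^{f_+}e^{f_-}$ is well defined. We then check that it is again a degree-zero field, i.e.\ that each coefficient of $z^n$ is a well-defined operator of degree $n$ taking values in $V\lser z$. The plan works componentwise on a homogeneous vector $v\in V_i$, using that $V$ is bounded below, say $V_j=0$ for $j<j_0$.

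First consider $e^{f_-}=\sum_{k\ge0}\frac1{k!}f_-^k$ with $f_-=\sum_{n<0}f_nz^n$, where $f_n$ lowers degree by $|n|$. For $v\in V_i$, any word $f_{n_1}\cdots f_{n_k}v$ with all $n_j<0$ lies in $V_{i+n_1+\dots+n_k}$. This vanishes once $\sum n_j<j_0-i$, and since each $n_j\le-1$, also once $k>i-j_0$. Hence $e^{f_-}v$ is a finite sum in $V[z\i]$, and the coefficient of $z^m$ is a finite sum of compositions of degree $m$. So $e^{f_-}\in\Fo(V)_-$ plus the identity, well defined.

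Next consider $e^{f_+}=\sum_k\frac1{k!}f_+^k$ with $f_+=\sum_{n>0}f_nz^n$; the term $n=0$ is absent by hypothesis. Here each $f_n$ raises degree by $n\ge1$. The coefficient of $z^m$ in $f_+^k$ is $\sum_{n_1+\dots+n_k=m,\,n_j\ge1}f_{n_1}\cdots f_{n_k}$. This is a finite sum, and it is zero for $k>m$. Thus the $z^m$-coefficient of $e^{f_+}$ is a finite sum of operators of degree $m$. Since $f_+$ lies in the algebra $\Fo(V)_+\iso\prod_{n\ge0}\iend_n(V)$, we get $e^{f_+}\in 1+\Fo(V)_+$, again well defined. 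The hypothesis $f_0=0$ is exactly what makes this sum finite, and this is the key point.

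Finally, the composition $\F(V)_+\ts\F(V)_-\to\F(V)$ exists in general, as noted in the field polarization example. Concretely, $e^{f_-}v=\sum_{m\le 0}w_mz^m$ is a finite sum, and applying $e^{f_+}$ to each $w_m$ gives an element of $V\pser z$. The result $e^{f_+}e^{f_-}v$ is therefore a finite sum of elements of $z^{m}V\pser z$, which lies in $V\lser z$. Its $z^n$-coefficient is $\sum_{a+b=n}[z^a]e^{f_+}\,[z^b]e^{f_-}$ with $a\ge0$ and $b\le0$. On $V_i$ only $b\ge j_0-i$ contributes, so the sum is finite on each graded piece, and each term has degree $a+b=n$. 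This gives a field of degree zero.

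The only delicate step is the finiteness of the coefficients of $e^{f_+}$. It relies on $f$ having no $z^0$ term, together with the bounded-belowness of $V$ for $e^{f_-}$. Everything else is routine bookkeeping of degrees.
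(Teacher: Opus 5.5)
Your proof is correct and follows the same route as the paper. Bounded-belowness gives $f_-^k v=0$ for $k\gg0$, so $e^{f_-}v\in V[z^{-1}]$; the absence of $f_0$ makes $e^{f_+}$ a well-defined series in positive powers of $z$; and applying it coefficientwise gives an element of $V((z))$ whose $z^n$-coefficients have degree $n$. You simply supply more explicit degree bookkeeping than the paper's brief argument does.
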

\begin{proof}
We have $f_+=\sum_{n>0}f_nz^n$ and $f_-=\sum_{n<0}f_nz^n$,
where $\deg f_n=n$ for all $n$.
If $v\in V$, then $f_-^kv=0$ for $k\gg0$, since $V$ is bounded below.
Therefore $e^{f_-}v\in V[z\i]$ is well-defined.

On the other hand,
$e^{f_+}:V\to V\pser z$ is well-defined.
It may be applied coefficient-wise to
$e^{f_-}v\in V[z\i]$, giving
$e^{f_+}e^{f_-}v\in V\lser z$.
Finally, every term $f_nz^n$ has degree zero, so
$e^{f_+}e^{f_-}$ has degree zero.
\end{proof}

\def\Co{\cC'}
\subsection{Graded trace}
Given a category $\cC$, let $\Co$ 
be the category of pairs $(V,f)$, where $V\in\cC$ and $f\in\End_\cC(V)$,
with morphisms $u:(V,f)\to(W,g)$ given by
$u\in\Hom_\cC(V,W)$ satisfying $uf=gu$.
If $\cC$ is symmetric monoidal, then so is $\Co$.

In particular, let $\Ga$ be a (cancellative) commutative monoid and let $\cC=\vect_K^{(\Ga)}$ be the category of \fdd $\Ga$-graded vector spaces.
Then $\Co$ is a symmetric monoidal category and the Grothendieck group $K_0(\Co)$ inherits the commutative ring structure.
The graded trace
\eq{\tr_\Ga:K_0(\Co)\to K[\Ga],\qquad (V,f)\mto\sum_{\ga\in\Ga}\tr(f|_{V_\ga})e^\ga,}
is a ring morphism.
For every $K$-algebra $L$ with multiplicative character
$\hi:\Ga\to L^\xx$, there is an algebra morphism $K[\Ga]\to L$, $e^\ga\mto\hi(\ga)$.
Combining it with $\tr_\Ga$, we obtain the ring morphism
\eq{\tr_\hi:K_0(\Co)\to L,\qquad (V,f)\mto\tr_\hi(f)=\sum_{\ga\in\Ga}\hi(\ga)\tr(f|_{V_\ga}).}

In particular, let $\Ga=C_2=\bZ/2\bZ$,
and consider the sign character
$\sgn:C_2\to\set{\pm1}$, $i\mto(-1)^i$.
For $\cC=\vect_K^{C_2}$, the ring morphism
\eq{\str:K_0(\Co)\to K,\qquad (V,f)\mto \str(f)=\sum_{i\in C_2}(-1)^i\tr(f|_{V^i}),}
is called the supertrace.

Let $\Ga=\bZ$, $L=K[q^{\pm1}]$,
and consider the character $\hi:\bZ\to L^\xx$, $n\mto q^n$.
For $\cC=\vect_K^{(\bZ)}$, the ring morphism
\eq{\tr_q:K_0(\Co)\to L,\qquad
(V,f)\mto
\tr_q(f)=
\sum_{n\in\bZ}\tr(f|_{V_n})q^n,
}
is called the $q$-trace.

Let $\Ga=\bZ\xx C_2$, $L=K[q^{\pm1}]$,
and consider the character $\hi:\Ga\to L^\xx$, $(n,i)\mto(-1)^i q^n$.
The objects $V=\bop_{n\in\bZ}V_n=\bop_{n\in\bZ}\bop_{i\in C_2}V_n^i$
in $\cC=\vect_K^{(\bZ\xx C_2)}$ are called graded superspaces.
The ring morphism
\eq{\str_q:K_0(\Co)\to L,\qquad
(V,f)\mto\str_q(f)=\sum_{n\in\bZ}\str(f|_{V_n})q^n,
}
is called the (super) $q$-trace.

In the next sections we will discuss supertraces and $q$-traces in more detail.

\subsection{Supertrace properties}
If $Y$ is a smooth projective variety over $\bC$,
then the vector space $V=H^*(Y,\bQ)$ is cohomologically graded
and inherits a $C_2$-grading so that
$\hi(Y)=\sum_i(-1)^{i}\dim(V_i)=\str(\id_V)$,
where $\str$ is the supertrace.
More generally, if $V=\bop_{i\in\bZ}V^i$ is a \fdd $\bZ$-graded vector space,
then $V$ is also $C_2$-graded, where for $a\in V^i$ we define its parity ($C_2$-degree) $p(a)=i\pmod 2$.

Let $V=\bop_{i\in C_2}V^i\in\vect_K^{C_2}$ be a superspace.
For $f\in\End(V)$,
its supertrace is
\eq{
\str(f)=\tr(f_{00})-\tr(f_{11}),}
where $f_{ij}\in\Hom(V^i,V^j)$ are components of $f$
for $i,j\in C_2$.

If $V$ is a quadratic superspace,
we equip $V\ts V$ with the pairing
\eq{(a\ts b,x\ts y)=(-1)^{p(b)p(x)}(a,x)(b,y).}
For $f\in\End(V)$,
define its reproducing kernel
to be an element $C_f\in V\ts V$ such that
\eq{\label{repr ker}
(fa,b)=(a\ts b,C_f)\qquad \forall a,b\in V.}
If $(e_i)_i$ and $(e^i)_i$ are dual bases of $V$ satisfying $(e_i,e^j)=\de_{ij}$
and $f$ is even, then
\eq{C_{f}=\sum_i e^i\ts fe_i,}
since $(e_i\ts e^j,C_{f})=(-1)^{p(e_i)p(e_j)}(e^j,fe_i)=(fe_i,e^j)$.

\begin{lemma}\label{str and C}
Let $V$ be a quadratic superspace, $f\in\End(V)$ and $C_f=\sum_k x_k\ts y_k$ be the reproducing kernel of $f$.
Then
\[\str(f)=\sum_k(x_k,y_k).\]
\end{lemma}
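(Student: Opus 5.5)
The identity is linear in $f$, and the reproducing kernel $C_f$ is well defined and linear in $f$. Indeed, the pairing on $V\ts V$ is non-degenerate, and $(fa,b)$ is bilinear in $a,b$. Moreover, $\sum_k(x_k,y_k)$ is the image of $C_f$ under the linear map $V\ts V\to K$, $x\ts y\mto(x,y)$. So the expression does not depend on how $C_f$ is written as a sum of pure tensors. I will therefore split $f=f_{\mathrm{ev}}+f_{\mathrm{odd}}$ into its even part ($f_{00}+f_{11}$) and odd part ($f_{01}+f_{10}$), and treat the two cases separately.

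\emph{Even case.} The paper has already shown that for even $f$ and dual bases $(e_i)$, $(e^i)$ with $(e_i,e^j)=\de_{ij}$, we have $C_f=\sum_i e^i\ts fe_i$. I will choose both bases homogeneous, which is possible since the form is even. Then
\[\sum_i(e^i,fe_i)=\sum_i(-1)^{p(e_i)}(fe_i,e^i),\]
using $(a,b)=(-1)^{p(a)}(b,a)$ together with the facts that $p(fe_i)=p(e_i)$ and that the pairing is nonzero only between elements of equal parity. Finally, $(fe_i,e^i)$ is the $i$-th diagonal coefficient of $f$ in the basis $(e_i)$. Summing with the signs $(-1)^{p(e_i)}$ gives $\tr(f_{00})-\tr(f_{11})=\str(f)$.

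\emph{Odd case.} Here $\str(f)=0$, since $f$ has no diagonal blocks. It remains to show that $\sum_k(x_k,y_k)=0$. I will compute $C_f$ in the same way: the ansatz $C_f=\sum_i \pm e^i\ts fe_i$, with a sign depending on $p(e_i)$, satisfies \cref{repr ker} after tracking the Koszul sign $(-1)^{p(b)p(x)}$. Each term then pairs $e^i$ with $fe_i$, which has the opposite parity. Since the form is even, $(e^i,fe_i)=0$, so the sum vanishes.

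The only real difficulty is sign bookkeeping in the even case: checking that the super-symmetry sign turns the plain trace into the supertrace rather than cancelling it. The odd case is immediate once parity is taken into account.
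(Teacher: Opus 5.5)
Your proposal is correct and is essentially the paper's proof: reduce to even $f$, use $C_f=\sum_i e^i\otimes fe_i$, and convert $(e^i,fe_i)$ into $(-1)^{p(e_i)}(fe_i,e^i)$ to recover the supertrace. The only difference is in the odd case. The paper just observes that $C_f$ has the same parity as $f$, so every $(x_k,y_k)$ vanishes by evenness of the form. Your explicit ansatz also works, with sign $-(-1)^{p(e_i)}$ in front of each term $e^i\otimes fe_i$.
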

\begin{proof}
We can assume that $f$ is even
(if $f$ is odd, then $C_f$ is also odd and $\str(f)=0=\sum_k(x_k,y_k)$).
Therefore $C_f=\sum_k x_k\ts y_k=\sum_ie^i\ts fe_i$, hence
\[\str(f)=\sum_i (-1)^{p(e_i)}(fe_i,e^i)
=\sum_i (e^i,fe_i)=\sum_k(x_k,y_k).
\qedhere
\]
\end{proof}

\subsection{\tpdf{$q$}{q}-traces}
Let $K$ be a field of characteristic zero,
and let $V=\bop_{n\in\bZ}V_n\in\vect_K^\bZ$ be a nonzero bounded below $\bZ$-graded vector space over $K$ with $\dim V_n<\infty$ for all $n$.
Define the degree operator $\d$ on $V$ by $\d|_{V_n}=n\cdot\id$,
and define the operator
\[
q^\d:V\to V\ts K[q^{\pm1}],\qquad V_n\ni x\mto q^n x.
\]
For $f\in\iend_n(V)$, we have
\begin{equation}\label{qd-comm}
q^\bd f=q^n\cdot fq^\bd.
\end{equation}

Given $f\in\iend(V)$, define the $q$-trace
\eq{\tr_q(f)
=\tr(q^\d f)=\sum_{n\in\bZ}\tr(f_{nn})q^n
\in K\lser q,}
where $f_{ij}\in\Hom(V_i,V_j)$ is the component of $f$ for $i,j\in\bZ$.
Note that $\tr_q(f)$ depends only on the degree zero component $f_0\in\iend_0(V)$ of $f$.
We obtain a linear map
\[\tr_q:A=\iend(V)\to K\lser q\]
such that
\[\tr_q(1)=\sum_{n\in\bZ}\dim(V_n)q^n=P(V,q)\]
is the Poincar\'e series of $V$.
Define the normalized $q$-trace
\eq{\label{norm q-trace}
\ta:A=\iend(V)\to K\lser q,\qquad f\mto\frac{\tr_q(f)}{\tr_q(1)}
=\frac{\tr(q^\bd f)}{\tr(q^\bd)},
}
so that $\ta(1)=1$.

\begin{lemma}
\label{ta=0}
Let $a\in A_n$ and $b\in A$.
Then
\begin{enumerate}
\item $\ta(ab)=q^n\cdot\ta(ba)$.
\item
If $n\ne0$ and $[a,b]=0$, then
$\ta(ab)=\ta(a)=0$.
\end{enumerate}
\end{lemma}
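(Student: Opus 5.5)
The plan is to derive both parts from the commutation rule \cref{qd-comm} and cyclicity of the trace on each finite-dimensional piece $V_m$. Here $A=\iend(V)$ with its $\bZ$-grading, $a\in A_n$ is homogeneous of degree $n$, and $b\in A$ is arbitrary.

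For (1), note first that $\tr_q$ only sees the degree-zero component. So we may replace $b$ by its component $b_{-n}\in A_{-n}$; the other components contribute nothing to either $\tr_q(ab)$ or $\tr_q(ba)$. Now $ab_{-n}$ and $b_{-n}a$ both have degree zero. On each graded piece, $ab_{-n}$ restricts to $V_m\to V_{m-n}\to V_m$ and $b_{-n}a$ restricts to $V_{m-n}\to V_m\to V_{m-n}$. Cyclicity of the ordinary trace for maps between finite-dimensional spaces gives
\[
\tr(ab|_{V_m})=\tr(ba|_{V_{m-n}}).
\]
Multiplying by $q^m$ and summing over $m$ yields
\[
\tr_q(ab)=\sum_m q^m\tr(ba|_{V_{m-n}})=q^n\tr_q(ba).
\]
Equivalently, $\tr(q^\d ab)=\tr(b\,q^\d a)=q^n\tr(ba\,q^\d)$ by \cref{qd-comm}. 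The sums are finite in each degree because $V$ is bounded below and each $V_m$ is finite-dimensional, so this is legitimate as an identity in $K\lser q$. Dividing by $\tr_q(1)$ gives $\ta(ab)=q^n\ta(ba)$. In the super setting the same argument works with the supertrace, using its supercyclicity. The only point needing care is the bookkeeping of indices; otherwise this is a direct computation.

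For (2), assume $n\ne0$ and $[a,b]=0$, so $ab=ba$. Part (1) gives $\ta(ab)=q^n\ta(ab)$, that is, $(1-q^n)\ta(ab)=0$. Since $n\ne0$, the element $1-q^n$ is a nonzero element of the field $K\lser q$, hence invertible, so $\ta(ab)=0$. The statement $\ta(a)=0$ needs no hypothesis on $b$. Taking $b=1$, which commutes with $a$, the same argument gives it. Alternatively, $a$ has degree $n\ne0$, so its degree-zero component vanishes and $\tr_q(a)=0$ directly.

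For super-graded $a,b$ with the super bracket, the relation $[a,b]=0$ means $ab=\pm ba$. Supercyclicity $\str(ab)=\ep(a,b)\str(ba)$ exactly absorbs this sign, so the conclusion is unchanged. No step is a genuine obstacle; the main care is in the well-definedness of the infinite sums in $K\lser q$ and in tracking signs in the super case.
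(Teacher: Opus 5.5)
Your proof is correct and follows the paper's argument. Like the paper, you reduce to $b\in A_{-n}$, combine cyclicity of the trace with the commutation rule \cref{qd-comm} (which you also unpack degree by degree), and obtain (2) from $(1-q^n)\ta(ab)=0$, with $b=1$ giving $\ta(a)=0$.
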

\begin{proof}
\clm1
We can assume that $b\in A_{-n}$, hence $b\qd=q^n\cdot \qd b$.
Therefore
\[\ta(ab)
=\frac{\tr(\qd ab)}{\tr(\qd)}
=\frac{\tr(b\qd a)}{\tr(\qd)}
=q^n\frac{\tr(\qd ba)}{\tr(\qd)}
=q^n\ta(ba).
\]

\clm2
We have $\ta(ab)=q^n\ta(ba)=q^n\ta(ab)$, hence $\ta(ab)=0$.
For $b=1$, we obtain $\ta(a)=0$.
\end{proof}

\begin{lemma}\label{ta-exp}
Let $a\in A_k$ and $b\in A_{-k}$ for $k\ne0$ be such that $c=[b,a]\in K$.
Let $d\in A$ commute with $a,b$.
Then
\[\ta(e^{sa} e^{tb} d)
=\exp\rbr{\frac{q^kstc}{1-q^{k}}}\ta(d)\in K\lser q\pser{s,t}.\]
\end{lemma}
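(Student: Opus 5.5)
We prove the identity coefficientwise in $s$ and $t$. The main tool is the cyclicity relation of \cref{ta=0}(1), which we use to move a factor of $b$ around the trace. Write $F(s,t)=\ta(e^{sa}e^{tb}d)\in K\lser q\pser{s,t}$. All products make sense in $\iend(V)\pser{s,t}$, since each coefficient of $s^it^j$ is a finite product. We will derive a first-order differential equation in $t$ for $F$ and solve it, using the initial condition $F(s,0)=\ta(e^{sa}d)$. We then show $\ta(e^{sa}d)=\ta(d)$ by the same method applied in the variable $s$, or directly from \cref{ta=0}(2).

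\emph{Initial condition.} The element $a^i d$ has degree $ki+\deg$ contributions. More precisely, $d$ commutes with $a$, and for $i\ge1$ we have $a^i\in A_{ki}$ with $ki\neq0$. Moreover $[a^i,a^{i-1}\cdot]$ commutes appropriately: apply \cref{ta=0}(2) to the element $a\in A_k$ and $a^{i-1}d$, which commutes with $a$. This gives $\ta(a\cdot a^{i-1}d)=0$. Hence $\ta(e^{sa}d)=\ta(d)$.

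\emph{Differential equation.} Compute $\partial_t F=\ta(e^{sa}\,b\,e^{tb}d)$. Since $[b,a]=c$ is central, we have $e^{sa}b=(b-sc)e^{sa}$, which follows from $[b,a^n]=nca^{n-1}$ and hence $[b,e^{sa}]=sce^{sa}$. This yields
\[
\partial_tF=\ta(b\,e^{sa}e^{tb}d)-sc\,F.
\]
Now apply \cref{ta=0}(1) with $b\in A_{-k}$. It gives $\ta(b\,X)=q^{-k}\ta(X\,b)$ for $X=e^{sa}e^{tb}d$, where we may replace $X$ by its degree-$k$ component. Since $b$ commutes with $e^{tb}$ and with $d$, we get $\ta(Xb)=\ta(e^{sa}be^{tb}d)=\partial_tF$. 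Therefore
\[
\partial_tF=q^{-k}\partial_tF-scF,
\]
that is, $(1-q^{-k})\partial_tF=-scF$. Equivalently, $\partial_tF=\frac{q^ksc}{1-q^k}F$.

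\emph{Solving the equation.} Here $\frac{q^k}{1-q^k}$ is invertible in $K\lser q$ because $k\ne0$, so the equation is meaningful in $K\lser q$. The unique solution in $K\lser q\pser{s,t}$ with the initial value $F(s,0)=\ta(d)$ is $\exp\bigl(\frac{q^kstc}{1-q^k}\bigr)\ta(d)$. Uniqueness follows by comparing coefficients of $t^j$ recursively.

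\emph{Main obstacle.} The point requiring care is justifying the use of the cyclic relation inside formal power series. We apply \cref{ta=0}(1) coefficientwise in $s^it^j$, using that the homogeneous components $a^ib^jd$ behave correctly. We also need to check the direction of the commutator sign in $e^{sa}b=(b-sc)e^{sa}$: from $[b,a]=c$ we get $ba=ab+c$, hence $be^{sa}=e^{sa}(b+sc)$, and so $e^{sa}b=(b-sc)e^{sa}$. This sign determines the sign of the final exponent.
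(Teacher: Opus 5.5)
Your proof is correct. It uses the same two ingredients as the paper's proof, the twisted cyclicity $\tau(xy)=q^{n}\tau(yx)$ for $x\in A_n$ from \cref{ta=0} and the relation $[b,a]=c$, but organizes them differently.

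The paper works coefficientwise in two steps:
\begin{enumerate}
\item It first kills $\tau(a^mb^nd)$ for all $m\ne n$ by a degree argument.
\item It then derives the recursion $x_n=q^k(x_n+ncx_{n-1})$ for the diagonal terms $x_n=\tau(a^nb^nd)$, by cycling a factor $a$ and commuting it past $b^n$.
\end{enumerate}
You instead cycle the factor $b$, picking up $q^{-k}$, and commute it past $e^{sa}$. This gives the first-order equation $(1-q^{-k})\partial_tF=-scF$ for the whole generating function.

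What each buys: the only vanishing you need is $\tau(a^id)=0$ for $i\ge1$, as the initial condition at $t=0$. The vanishing of all other off-diagonal terms then falls out of the solution $F=\exp\bigl(\frac{q^kstc}{1-q^k}\bigr)\tau(d)$ automatically. The paper's version is more hands-on and exhibits the diagonal coefficients $x_n=n!\bigl(\frac{q^kc}{1-q^k}\bigr)^n\tau(d)$ explicitly. Yours is slightly more economical and closer to the standard generating-function derivation.

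Two small presentational fixes:
\begin{enumerate}
\item The opening sentences of your initial-condition paragraph are garbled. They should simply say that $a^{i-1}d$ commutes with $a\in A_k$, so \cref{ta=0}(2) gives $\tau(a^id)=0$.
\item The element you actually divide by is $1-q^{-k}$, not $\frac{q^k}{1-q^k}$. It is a unit because it is nonzero in the field $K((q))$ when $k\ne0$.
\end{enumerate}
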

\begin{proof}
If $m\ne n$, then $\deg(a^mb^n)=k(m-n)\ne0$
and $a^mb^n$ commutes with $d$,
hence $\ta(a^mb^nd)=0$ by \cref{ta=0}.
We have $b^na=ab^n+ncb^{n-1}$, since $\ad_a(b^n)=-ncb^{n-1}$.
Therefore $x_n=\ta(a^nb^nd)$ satisfies
\[x_n
=q^k\ta(a^{n-1}b^nda)
=q^k\rbr{\ta(a^nb^nd)+nc \ta(a^{n-1}b^{n-1}d)}
=q^k\rbr{x_n+ncx_{n-1}}.
\]
This implies $x_n=n\frac{q^kc}{1-q^k}x_{n-1}$.
Since $x_0=\ta(d)$, we obtain $x_n=n!\rbr{\frac{q^kc}{1-q^{k}}}^n\ta(d)$, hence
\[\ta(e^{sa}e^{tb}d)
=\sum_{n\ge0}\frac{s^nt^nx_n}{(n!)^2}
=\exp\rbr{\frac{q^kstc}{1-q^k}}\ta(d).
\qedhere
\]
\end{proof}

As an application,
let $V=V(H)$ be the Fock space for a quadratic vector space $H$.
For $a\in H$, consider the free boson field $\BF_a(z)$
\cref{free bos} with creation and annihilation parts
\[
\BF_a(z)_+=\sum_{n>0}\frac{a_{-n}}{n}z^n,\qquad
\BF_a(z)_-=\sum_{n>0}\frac{a_{n}}{-n}z^{-n}.
\]

\begin{lemma}\label{ta(f)}
Let $a,b\in H$ and let $f=e^{\BF_a(z)_+}e^{\BF_b(w)_-}$.
Then
\[\ta(f)=\ph{qz/w}^{(a,b)}.\]
\end{lemma}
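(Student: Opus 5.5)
The plan is to split the exponentials into commuting single-mode pieces and to apply \cref{ta-exp} once for each mode $n>0$. Throughout I take $a,b$ even (the case in which the normally ordered exponential was defined). Write
\[
\BF_a(z)_+=\sum_{n>0}\frac{a_{-n}z^n}{n},\qquad
\BF_b(w)_-=-\sum_{n>0}\frac{b_nw^{-n}}{n}.
\]
Modes with different $|n|$ commute by \cref{H-bracket}, and modes of the same sign commute with each other. Hence
\[
f=\prod_{n>0}e^{a_{-n}z^n/n}\,e^{-b_nw^{-n}/n},
\]
where the factors for different $n$ commute with one another.

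The first step is to see that only finitely many factors matter for each coefficient. The product is expanded as a formal series in $z^{\pm1},w^{\pm1}$. By \cref{ta=0}, only terms of total degree zero can have nonzero trace. A monomial $\prod_n a_{-n}^{m_n}b_n^{m_n'}$ has degree $\sum_n n(m_n-m_n')$, and more precisely \cref{ta=0} kills each $n$-block with $m_n\ne m_n'$. The surviving terms therefore carry $(z/w)^{\sum_n nm_n}$. So the coefficient of $(z/w)^k$ only involves modes with $n\le k$, and it suffices to prove the identity with $f$ replaced by the finite product $f_N$ over $n\le N$, for every $N$, working in $K\lser q\pser{z/w}$.

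The main step is induction on the number of factors using \cref{ta-exp}. For a fixed $n$, apply it with
\[
a\leftarrow a_{-n}\in A_n,\qquad b\leftarrow b_n\in A_{-n},\qquad s=z^n/n,\qquad t=-w^{-n}/n.
\]
Here $c=[b_n,a_{-n}]=n(b,a)=n(a,b)$, and $d$ is the product of the remaining factors, which commutes with $a_{-n}$ and $b_n$. Substituting $s,t$ into \cref{ta-exp} peels off the factor
\[
\exp\Bigl(-(a,b)\frac{q^n(z/w)^n}{n(1-q^n)}\Bigr).
\]
Iterating gives $\ta(1)=1$ times the product of these factors over $n\le N$. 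Letting $N\to\infty$, the total exponent is
\[
-(a,b)\sum_{n>0}\frac{(q z/w)^n}{n(1-q^n)}=-(a,b)\sum_{m\ge1}\sum_{n>0}\frac{(q^mz/w)^n}{n}=(a,b)\sum_{m\ge1}\log(1-q^mz/w).
\]
Exponentiating yields $\prod_{m\ge1}(1-q^mz/w)^{(a,b)}=\ph{qz/w}^{(a,b)}$.

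I expect the main obstacle to be making the infinite product and the application of \cref{ta-exp} legitimate. That lemma is formulated with formal parameters $s,t$ and finitely many operators, so the finite-truncation argument in the first step is essential, together with checking that substituting $s=z^n/n$ and $t=-w^{-n}/n$ is compatible with the expansion in $z/w$.
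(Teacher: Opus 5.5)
Your proposal is correct and follows essentially the paper's argument. Like the paper, you factor $f$ into commuting single-mode blocks, apply \cref{ta-exp} to each block of the truncation to modes $n\le N$, and pass to the limit $N\to\infty$ coefficientwise; your use of \cref{ta=0} to show that the coefficient of $(z/w)^k$ involves only the modes $n\le k$ simply spells out the step the paper leaves implicit.
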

\begin{proof}
We have $V\iso S(t\i H[t\i])\iso\bts_{n\ge1}S(Ht^{-n})$.
For $n\ge1$, let $a'=\frac{a_{-n}}{n}z^n$
and $b'=\frac{b_n}{-n}w^{-n}$.
Then $c=[b',a']=-\frac{(a,b)}n(z/w)^n$.
Applying \cref{ta-exp} to the product over $1\le n\le N$ and then
passing to the limit $N\to\infty$ coefficientwise, we obtain
\[\ta(f)
=\prod_{n\ge1}\exp\rbr{\frac{-(a,b)(qz/w)^n}{n(1-q^n)}}
=\ph{qz/w}^{(a,b)}.
\qedhere
\]
\end{proof}

\begin{remark}
Similarly, let $V=\bop_{n\in\bZ}V_n
=\bop_{n\in\bZ}(V_n^0\oplus V_n^1)\in(\vect_K^{C_2})^{\bZ}$
be a bounded below graded superspace.
Then we define the (super) $q$-trace
\[\str_q:A=\iend(V)\to K\lser q,\qquad
f\mto\str(q^\d f)
=\sum_{n\in\bZ}\str(f_{nn})q^n,
\]
where $f_{ij}\in\Hom(V_i,V_j)$ is the component of $f$ for $i,j\in\bZ$.
Assuming that $\str_q(1)\ne0$, define the normalization
\eq{\label{norm super-q-trace}
\ta:A=\iend(V)\to K\lser q,\qquad f\mto \frac{\str_q(f)}{\str_q(1)}.}
The preceding $q$-trace identities have corresponding super versions
(where exponential identities are applied to even elements).
All of the above results admit superspace analogues.
In particular, if $a\in A_n$ and $b\in A$ are parity-homogeneous, then
\[\ta(ab)=\ep(a,b)q^n\cdot\ta(ba).\]
The analogue of \cref{ta-exp} can be formulated for even $a,b$.
\end{remark}

\section{Integrals over Hilbert schemes}

\subsection{Nakajima operators}
Let $X$ be a smooth projective surface over $\bC$.
Then $H=H^*(X,\bQ)$ is a quadratic superspace
equipped with the pairing
\begin{equation}\label{pair}
\rbr{a,b}
=-\int_X ab,\qquad a,b\in H.
\end{equation}

Let $\hb Xn=\Hilb^n(X)$ be the $n$-th Hilbert scheme of $X$, and let
\begin{equation}
\bH=\bop_{n\ge0}\bH_n,\qquad
\bH_n=H^*(\hb Xn,\bQ).
\end{equation}
The identity element $\one\in\bH_0=H^*(\hb X0,\bQ)\iso\bQ$
is called the \idef{vacuum}.
For $n>0$ and $a\in H$, define the operator
$\fa_{-n}(a):\bH\to\bH$ of degree $n$,
called the \idef{Nakajima operator}, as follows.
For $m\ge0$, let
\[\hb Q{m+n,m}
=\sets{(\ksi,x,\eta)\in\hb X{m+n}\xx X\xx \hb Xm}
{\ksi\sps\eta,\,\supp(\ksi-\eta)=\set x}\]
and let $p_1,p_2,p_3$ denote the corresponding projections.
Then
\[\fa_{-n}(a):\bH_m\to\bH_{m+n},\qquad
u\mto p_{1*}\rbr{[\hb Q{m+n,m}]\cdot p_2^*a\cdot p_3^*u}.
\]
Let $\fa_{n}(a)=(-1)^n\fa_{-n}(a)^*$,
where $f^*:\bH_{m+n}\to \bH_m$ for $f:\bH_m\to\bH_{m+n}$
is the (super)adjoint \wrt the intersection pairings on $\bH_k$
\[(fu,v)=(-1)^{p(f)p(u)}(u,f^*v),\qquad u\in \bH_m,\, v\in\bH_{m+n}.\]
Let $\fa_0(a)=0$.
This defines linear maps
\[\fa_n:H\to\End(\bH),\qquad n\in\bZ.\]
In what follows,
we consider $\bH=\bop_{n\ge0}\bop_{i\in C_2}\bH_n^i$ as an $\bN$-graded superspace.
The (super) Lie bracket on $\iend(\bH)$ is defined as in
\cref{super Lie}.

\begin{theorem}
The Nakajima operators satisfy the commutation relations
\[[\fa_m(a),\fa_n(b)]=m\de_{m,-n}(a,b)\cdot \id_\bH,\]
hence define an action of the Heisenberg algebra $\hh$ on $\bH$.
Moreover, for the Fock space~$V$,
there is a unique $\hh$-module isomorphism
\[V\isoto \bH\]
sending the vacuum vector of $V$ to the vacuum vector of $\bH$.
In this way we identify the Nakajima operators $\fa_n(a)$
and the Heisenberg operators $a_n$ on the Fock space.
\end{theorem}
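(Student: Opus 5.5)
The statement splits into two parts of very different nature: a geometric computation of the commutators, and a formal representation-theoretic argument identifying $\bH$ with $V$. I would treat the first as an input from Nakajima, Grojnowski and Ellingsrud--Str{\o}mme, checking only that the normalizations in \cref{pair} and in the definition of $\fa_n(a)$ are consistent with it. The second I would deduce from the irreducibility of $V$ and Göttsche's formula.

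\emph{Commutation relations.} Nakajima's argument runs as follows. The composites $\fa_m(a)\fa_n(b)$ and $\fa_n(b)\fa_m(a)$ are given by convolution of the correspondences $\hb Q{\cdot,\cdot}$ (or their transposes). Off the locus where the two supports coincide, the two convolutions agree up to the Koszul sign, so the super-commutator is supported on the diagonal in $X$. A dimension count then shows that the commutator vanishes unless $m=-n$. In that case it is a multiple of the identity, and the factor is $c_m\int_X ab$ for a universal constant $c_m$. The constant is then computed by Ellingsrud--Str{\o}mme (equivalently, by Lehn's boundary-operator calculus) to be $c_m=(-1)^{m-1}m$, with Nakajima's original adjoint convention. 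I would then verify the signs. The convention $\fa_n(a)=(-1)^n\fa_{-n}(a)^*$ contributes one factor of $(-1)^{m}$, and the pairing $(a,b)=-\int_X ab$ contributes one more factor of $-1$. Together these turn $(-1)^{m-1}m\int_X ab$ into $m\,(a,b)$, which gives exactly \cref{H-bracket}. I expect this step to be the main obstacle: in particular, pinning down the constant $c_m$ with all signs correct, including the super-signs for odd classes. I would cite it rather than reprove the excess-intersection computation.

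\emph{The isomorphism.} Given the relations, $\bH$ is an $\hh$-module with $\bc$ acting as $1$. The vacuum $\one\in\bH_0$ is killed by $\fa_n(a)$ for $n>0$, because $\fa_n(a)$ lowers the degree by $n$ and $\bH_{-n}=0$. It is also killed by $\fa_0(a)=0$. By the universal property of the induced module $V=\Ind K$, there is therefore a unique $\hh$-homomorphism $\psi:V\to\bH$ with $\one\mapsto\one$. It is unique because $V$ is generated by its vacuum. Since $V$ is irreducible and $\psi\ne0$, the map $\psi$ is injective. It is also homogeneous of degree zero, since it commutes with the creation operators.

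For surjectivity it suffices to compare $\dim V_n$ and $\dim\bH_n$ for each $n$, or better the dimensions in each parity. Göttsche's formula gives
\[\sum_n \dim \bH_n^{\b0}\,q^n,\quad \sum_n\dim\bH_n^{\b1}\,q^n\]
as the even and odd parts of $\prod_{k\ge1}(1+q^k)^{b_1+b_3}/(1-q^k)^{b_0+b_2+b_4}$ (with the appropriate parity refinement). This is precisely the bigraded character of $S(\hh_-^{\b0})\ts\La(\hh_-^{\b1})$. Since $\psi$ is an injective map preserving both gradings between pieces of equal finite dimension, it is an isomorphism.
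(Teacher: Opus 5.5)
Your proposal is correct and matches how the paper treats this result. The paper states the theorem without proof as the standard Nakajima--Grojnowski theorem, with the Ellingsrud--Str{\o}mme constant absorbed into the conventions $(a,b)=-\int_X ab$ and $\fa_n(a)=(-1)^n\fa_{-n}(a)^*$, consistent with your sign check; your deduction of the isomorphism from the universal property and irreducibility of $V$ plus G\"ottsche's formula is the standard argument behind that citation, and since only the total dimensions $\dim V_n=\dim\bH_n$ are needed for surjectivity, the parity refinement is optional.
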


For every smooth projective variety $Y$,
the cohomology $H^*(Y,\bQ)$ is a (super) Frobenius algebra.
The idea which goes back to \cite{lehn_cup} is that the intersection theory of the Hilbert schemes~$\hb Xn$ is mostly controlled by the Frobenius algebra $H=H^*(X,\bQ)$.
Because of our choice of pairing~\cref{pair},
we equip $H=H^*(X,\bQ)$
with the coproduct and the counit
\begin{equation}
\de=-\De_*:H\to H\ts H,\qquad \eps=-\int_X:H\to\bQ,
\end{equation}
where $\De:X\to X\xx X$ is the diagonal.
We define $\de_n:H\to H^{\ts n}$
inductively by
$\de_0=\eps$, $\de_1=\id$ and $\de_{n+1}=(\de_n\ts\id)\de$.
If $\De_n:X\to X^n$ is the diagonal,
then $\de_n=(-1)^{n-1}\De_{n*}$.

\begin{remark}
More generally, let $X$ be a smooth quasi-projective surface equipped with an action of a torus~$T\iso\Gm^n$.
Then by the localization theorem,
the inclusion of the $T$-invariant locus $i:X^T\emb X$
induces an algebra isomorphism
\[i^*:H^*_T(X,\bQ)_\loc\to H^*_T(X^T,\bQ)_\loc
\iso H^*(X^T,\bQ)\ts_\bQ K,
\]
where $K$ is the fraction field of the algebra
\[
A=H^*_T(\pt,\bQ)=H^*(BT,\bQ)\iso S(X^*(T)\ts_\bZ\bQ),\qquad
X^*(T)=\Hom(T,\Gm),
\]
and $H^*_T(X,\bQ)_\loc=H^*_T(X,\bQ)\ts_AK$.
If $X^T$ is projective, then the right-hand side becomes a Frobenius
algebra over $K$ with the counit (trace)
\[
\eps(a)=-\int_{X^T}\frac{a}{\fe(N_{X^T/X})}.
\]
Thus the equivariant intersection theory of the Hilbert schemes of $X$
can be studied using this Frobenius algebra.
The constructions and results below admit corresponding localized equivariant versions.
\end{remark}

\subsection{Tautological bundles}
Given a vector bundle $E$ on $X$,
its \idef{tautological bundle} on $X^{[n]}$ is
\begin{equation}
\hb En=p_{1*}\rbr{\cO_{Z_n}\ts p_2^*E}),
\end{equation}
where $Z_n\sbs X^{[n]}\xx X$ is the universal subscheme
and $p_1,p_2$ are projections.
This is a vector bundle of rank $n\rk(E)$.
On the other hand, for $a\in H$, define
\begin{equation}\label{G}
G(a,n)
=\hb an
=p_{1*}\rbr{\ch(\cO_{Z_n})\cdot p_2^*\rbig{a\td(X)}}\in\bH_n.
\end{equation}
By Grothendieck--Riemann--Roch,
\begin{equation}
\ch(E^{[n]})
=G(\ch(E),n).
\end{equation}
For $a\in H^i$,
let $G_k(a,n)$ denote the component of $G(a,n)$ in $\bH_n^{i+2k}$.
Define the \idef{Chern character operator}
as the cup product operator
\begin{equation}
\fG_k(a)\in\End(\bH),\qquad
\bH_n\ni u\mto G_k(a,n)\cdot u\in\bH_n,\qquad n\ge0.
\end{equation}
We have
\begin{equation}\label{ch as G}
\ch_k(\hb En)
=\sum_{i=0}^k G_{k-i}(\ch_i(E),n),
\end{equation}
hence the cup products with $\ch_k(\hb En)$
can be reduced to the operators $\fG_{k-i}(\ch_i(E))$.

Our ultimate goal is to calculate the integrals
\[
\sum_{n\ge0}q^n\int_{\hb Xn}
\ch_{k_1}(\hb En_1)\dots\ch_{k_r}(\hb En_r)\,\fe_t(T_{\hb  Xn})
=\sum_{n\ge0}q^n\int_{\hb Xn}\Phi(n)\cdot \fe_t(T_{\hb Xn}),
\]
where $\Phi(n)=\ch_{k_1}(\hb En_1)\dots\ch_{k_r}(\hb En_r)\in\bH_n$ for $n\ge0$.
As we will see in \cref{int as trace},
the above expression can be written as a (super) $q$-trace
\[\str\rbr{\qd \W(L,z)\Phi_\bul},\]
where $\Phi_\bul\in\End(\bH)$ is the cup product operator corresponding to
$\Phi=(\Phi(n))_{n\ge0}$
and $\W(L,z)$ is a certain vertex operator.
Therefore we need to understand better the cup products with $\ch_k(\hb En)$
or, equivalently, the Chern character operators $\fG_k(a)$ for $a\in H$.

\subsection{Chern character operators and Nakajima operators}
\label{LQW:sec}
Let $\Zs=\bZ\ms\set0$.
For $\bn=(n_1,\dots,n_r)\in\Zs^r$, define the linear map
\begin{equation}
\fat_\bn
=\fa_{n_1}\ts\dots\ts\fa_{n_r}:H^{\ts r}\to\iend(\bH)^{\ts r}.
\end{equation}
We will compose it with the \NOP \cref{nop}
\[\N:\iend(\bH)^{\ts r}\to \iend(\bH)\]
associated with the degree polarization of $\iend(\bH)$ (see \cref{gr alg pol}).
The composition
\[\N\fat_\bn:H^{\ts r}\to\iend(\bH)\]
can be described as follows.
For $a=a_1\ts\dots\ts a_r\in H^{\ts r}$,
we reorder the factors in $\fat_\bn(a)=\fa_{n_1}(a_1)\ts\dots\ts\fa_{n_r}(a_r)$
so that the degrees are non-increasing.
Since $\deg \fa_{n_i}(a_i)=-n_i$,
we choose $\si\in S_r$
with
$n_{\si (1)}\le\dots \le n_{\si (r)}$.
Then
\begin{equation}
\N\fat_\bn(a_1\ts\dots\ts a_r)
=
\ep(\si,a)
\fa_{n_{\si(1)}}(a_{\si(1)})\dots\fa_{n_{\si(r)}}(a_{\si(r)})
\in\End(\bH),
\end{equation}
where $\ep(\si,a)
=\prod_{i<j,\si(i)>\si(j)}(-1)^{p(a_{\si(i)})p(a_{\si(j)})}$.
Different choices of $\si$ give the same expression,
since $[\fa_n(a_i),\fa_n(a_j)]=0$ (the super-bracket).
We have $\N\fat_{\ta\bn}=\N\fat_\bn\circ\ta\i$ for all $\ta\in S_r$,
where
\[\ta\i(a_1\ts\dots\ts a_r)=\ep(\ta,a)a_{\ta(1)}\ts\dots\ts a_{\ta(r)}.\]

Finally, using the coproduct $\de_r:H\to H^{\ts r}$, we define
\begin{equation}
\fa_\bn=\N\fat_\bn\de_r:H\to\End(\bH).
\end{equation}
Since $\de$ is a (super) cocommutative coproduct, the map $\fa_\bn$ is independent of the order of $\bn$.
Therefore we can define $\fa_\la:H\to\End(\bH)$
for every $\la\in \Zs^r/S_r$, called
a \idef{generalized partition} of length $r$.
We can write $\la$ in the form
\[\la=(\dots(-2)^{m_{-2}}(-1)^{m_{-1}}1^{m_{1}}2^{m_{2}}
\dots)=(i^{m_i})_{i\ne0},\]
where $m:\Zs\to\bN$ is a map with finite support,
such that $m_i$ is the multiplicity of $i$ in $\la$.
Such a map is called a \idef{multiset} on $\Zs$.
Define $\la!=\prod_{i\ne0}m_i!$ and
\begin{equation}
p_k(\la)=\sum_{i\ne0}i^km_i,\qquad
\ell(\la)= p_0(\la)=\sum_{i\ne0}m_i,\qquad
\n\la=p_1(\la)=\sum_{i\ne0}im_i.
\end{equation}

By abuse of notation, we denote the first Chern class
$c_1(K_X)\in H^2$ of the canonical bundle by $K_X$.
Let $e_X=c_2(X)\in H^4$.

\begin{theorem}[See {\cite[Theorem 4.6]{li_hilbert}}]
\label{LQW:th}
Let $a\in H$ satisfy $K_Xa=0$.
For $k\ge1$, we have
\[
\fG_k(a)
=(-1)^{k}\sum_{\ov{\n\la=0}{\ell(\la)=k+2}}
\frac{1}{\la!}\fa_\la(a)
+(-1)^{k-1}
\sum_{\ov{\n\la=0}{\ell(\la)=k}}
\frac{p_2(\la)-2}{24\la!}
\fa_\la(e_Xa).
\]
Moreover,
$\fG_0(a)=\sum_{\n\la=0,\ell(\la)=2}\frac1{\la!}\fa_\la(a)$.
\end{theorem}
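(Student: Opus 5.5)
This statement is quoted from Li--Qin--Wang \cite[Theorem 4.6]{li_hilbert}, so within this paper it would simply be cited. If one wanted to reprove it, the plan would follow the Lehn--Li--Qin--Wang strategy in three steps.

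\emph{Step 1: commutators.} I would start from Lehn's commutator formula, $[\fG_1(1),\fa_n(a)] = \tfrac{n^2}{2}\cdots$. This describes $\partial=\fG_1(1)$ acting by commutator on Nakajima operators: a cubic normally ordered term $\tfrac{n}{2}\sum_{i+j=n}\nop{\fa_i\fa_j}\de(a)$, plus a boundary term $\tfrac{n(|n|-1)}{2}\fa_n(K_Xa)$. Li--Qin--Wang then prove a more general formula for $[\fG_k(a),\fa_n(b)]$. In that formula the correction terms involve $K_X$ and $e_X$, and the hypothesis $K_Xa=0$ kills all terms containing $K_X$.

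\emph{Step 2: reduction to the vacuum.} Both sides of the identity are cup-product-type operators. The left side is given, and the right side is a sum of normally ordered expressions with $\n\la=0$, hence of degree zero. I would then argue that an operator $\cO$ with known commutators with all $\fa_n(b)$, $n<0$, is determined by $\cO\one$. Since $\bH$ is generated from $\one$ by creation operators, it suffices to check two things: that both sides annihilate the vacuum (or agree there, using $G_k(a,0)=0$), and that both sides have the same commutators with every $\fa_{-n}(b)$.

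\emph{Step 3: the main computation.} For the right-hand side, the commutator $[\fa_\la(a),\fa_{-n}(b)]$ is computed by Wick-type contraction using \cref{H-bracket} and the coproduct $\de_r$. This produces terms $\fa_{\la'}(ab)$, with $\la'$ obtained by removing one part $n$ of $\la$, and a combinatorial weight equal to the multiplicity. For the left-hand side, the commutator $[\fG_k(a),\fa_{-n}(b)]$ would be computed by induction on $k$ via the geometric recursion for $G(a,n)$. The main obstacle is matching the two: verifying that the generating function in $k$ of the commutators reproduces the term $\tfrac{p_2(\la)-2}{24}$ multiplying $e_X$. I would try a generating-series argument, packaging the $\fG_k(a)$ as the coefficient extraction of $\nop{\exp(\sum_n \fa_n z^{n}/n)}$-type vertex operators with Todd-class corrections.

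The base case is $\fG_0(a)=\sum_{\ell(\la)=2}\fa_\la(a)/\la!$, which is the well-known formula for multiplication by $G_0(a,n)$.
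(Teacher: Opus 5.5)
Citing \cite[Theorem 4.6]{li_hilbert} is exactly what the paper does; it gives no proof of its own. The only thing it adds is the remark that the signs $(-1)^k$ and $(-1)^{k-1}$ differ from the source because here $\de_r=(-1)^{r-1}\Delta_{r*}$, which comes from the choice $\de=-\Delta_*$ adapted to the pairing $(a,b)=-\int_Xab$. Thus $\fa_\la(a)$ in this paper is $(-1)^{\ell(\la)-1}$ times the source's $\fa_\la$ applied to $\Delta_{\ell(\la)*}a$. You should state this translation when you cite, since without it the displayed signs do not match the cited formula.

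Your optional reproof is not a proof, and its gap is exactly where the content lies. Step~2 needs $[\fG_k(a),\fa_{-n}(b)]$ for every $n>0$, but the \emph{geometric recursion for $G(a,n)$} you invoke is not an available input. What geometry supplies (Lehn's argument on the nested Hilbert scheme $X^{[n-1,n]}$, extended by Li--Qin--Wang) is only the case $n=1$, namely $[\fG_k(a),\fa_{-1}(b)]=\frac1{k!}\ad_\partial^k\fa_{-1}(ab)$ with $\partial=\fG_1(1)$. In addition, $[\fG_k(a),\partial]=0$, since both are cup products. To close the argument you would still have to do two things.

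First, expand $\ad_\partial^k\fa_{-1}(ab)$ into normally ordered Heisenberg monomials using Lehn's formula for $[\partial,\fa_n]$. Its main term is quadratic in the $\fa$'s, not cubic as you wrote; it is $\partial$ itself that is cubic. You must track the contractions that glue two coproduct legs and produce $\mu\de(x)=-e_Xx$, because these generate the $e_X$-term and its coefficient $\frac{p_2(\la)-2}{24}$.

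Second, pass from $\fa_{-1}$ to all $\fa_{-n}$ via Lehn's relation $[[\partial,\fa_{-1}(u)],\fa_{-n}(v)]\propto n\,\fa_{-n-1}(uv)$ and the Jacobi identity. This is where $[\fG_k(a),\partial]=0$ enters.

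Your Step~3 defers the first of these to an unspecified generating-series argument. So the sketch only covers the easy half: the Wick computation $[\fa_\la(a),\fa_{-n}(b)]=\pm n\,m_n\,\fa_{\la'}(ab)$ for the right-hand side. Note that the factor $n$, coming from $[a_n,b_{-n}]=n(a,b)$, is missing from your \emph{weight equal to the multiplicity}.
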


\begin{remark}
Our sign discrepancy with \cite{li_hilbert}
stems from the fact that $\de_n=(-1)^{n-1}\De_{n*}$.
\end{remark}

We will show in \cref{univ current} that expressions of the form
\begin{equation}\label{p-expr}
f_{j,r}(a)=\sum_{\ov{\n\la=0}{\ell(\la)=r}}\frac{p_j(\la)}{\la!}\fa_\la(a)
\end{equation}
can be written in terms of vertex operators.
Note that for $j=0$, the expression \cref{p-expr} is
equal to
\[
f_{0,r}(a)=r\sum_{\ov{\n\la=0}{\ell(\la)=r}}
\frac{1}{\la!}\fa_\la(a).\]
Therefore all terms in \cref{LQW:th} are scalar multiples of expressions of the form \cref{p-expr}
with $j=0$ or $j=2$.

\def\bil{\rbr} 

\subsection{Universal bosonic descendant}
\label{univ current}
Let $\F=\Fo(V)$ be the space (polarized algebra) of degree zero fields on~$V$.
For $a\in H$, consider the bosonic descendant \cref{descend}
\eq{\AD_a(z)=\BD1_a(z)=\sum_{n\ne0}a_nz^{-n}\in\F.}

Let $(e_i)_{i\in I}$ and $(f_i)_{i\in I}$
be (homogeneous) dual bases of $H$ \wrt the pairing,
satisfying $(e_i,f_j)=\de_{ij}$.
The \idef{universal bosonic descendant}
\eq{\UD(z)
=\sum_{i\in I} \AD_{e_i}(z)f_i=\sum_{i\in I}\sum_{n\in\Zs} \fa_n(e_i)f_iz^{-n}\in\F\ts H}
is independent of the basis choices,
since the same is true for the Casimir $\sum_{i\in I}e_i\ts f_i$.
For all $a\in H$, we have
\[\bil{a,\UD(z)}=A_a(z).\]

\begin{lemma}
For $a\in H$, we have
\[\de_r(a)
=\sum_{\bi\in I^r}\ep(\bi)\rbr{a,f_{i_1}\dots f_{i_r}}
e_{i_1}\ts\dots\ts e_{i_r},
\]
where $\ep(\bi)=\prod_{k<l}\ep(e_{i_k},e_{i_l})$.
\end{lemma}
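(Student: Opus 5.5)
Both sides are linear in $a$, so the plan is to compare them by pairing with an arbitrary test vector in $H^{\ts r}$. Write $\de_r(a)=\sum_{\bi}c_\bi\, e_{i_1}\ts\dots\ts e_{i_r}$, which is possible because $\{e_{i_1}\ts\dots\ts e_{i_r}\}$ is a basis of $H^{\ts r}$. The coefficients $c_\bi$ are then extracted by pairing against the dual basis $f_{j_1}\ts\dots\ts f_{j_r}$ in the pairing on $H^{\ts r}$. This pairing is the iterated version of $(a\ts b,x\ts y)=(-1)^{p(b)p(x)}(a,x)(b,y)$. The goal is to show
\[
\rbr{\de_r(a),\,f_{j_1}\ts\dots\ts f_{j_r}}=(a,f_{j_1}\dots f_{j_r}),
\]
and then invert the pairing matrix of the two tensor bases, which is diagonal up to signs.

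\textbf{Step 1 (adjunction).} I would first prove the Frobenius identity $(\de_r(a),x_1\ts\dots\ts x_r)=(a,x_1\cdots x_r)$ by induction on $r$. The cases $r=0,1$ are the definitions $\de_0=\eps$ and $\de_1=\id$; for $r=0$ one uses $\eps(a)=(a,1)$, which holds with the paper's sign conventions $\eps=-\int_X$ and $(a,b)=-\int_Xab$. For $r=2$, $\de=-\De_*$ is by definition adjoint to the cup product under $(\,,)$ on $H$ and $H\ts H$. Indeed $(\De_*a,x\ts y)$ equals $\int_{X\times X}\De_*a\cdot(x\boxtimes y)$ up to the Koszul sign built into the tensor pairing, and this is $\int_X axy$. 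The two minus signs, one from $\de$ and one from the pairing, must be checked to produce exactly one overall minus, matching $(a,xy)=-\int_X axy$. The inductive step uses $\de_{r+1}=(\de_r\ts\id)\de$ together with compatibility of the tensor pairing with the decomposition $H^{\ts (r+1)}=H^{\ts r}\ts H$.

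\textbf{Step 2 (dual basis pairing).} Next I compute $(e_{i_1}\ts\dots\ts e_{i_r},f_{j_1}\ts\dots\ts f_{j_r})$. By the iterated tensor-pairing rule this equals $\prod_k\de_{i_kj_k}$ times the sign $\prod_{k<l}(-1)^{p(e_{i_l})p(f_{j_k})}$. On the diagonal $i=j$, parities agree because the pairing is even, so this sign is $\prod_{k<l}\ep(e_{i_k},e_{i_l})=\ep(\bi)$. Hence $c_\bi\,\ep(\bi)=(a,f_{i_1}\cdots f_{i_r})$, and since $\ep(\bi)^2=1$ this gives the claimed formula.

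\textbf{Main obstacle.} The only real difficulty is sign bookkeeping. Two things need care: the correct iterated form of the pairing on $H^{\ts r}$, which is general Koszul reordering when $x_1\ts\dots\ts x_r$ is paired against $y_1\ts\dots\ts y_r$; and the consistency of the overall minus signs in Step 1. I would fix the iterated pairing as $\prod_{k>l}(-1)^{p(a_k)p(x_l)}\prod_k(a_k,x_k)$, check that it is compatible with the inductive splitting, and verify the case $r=2$ explicitly.
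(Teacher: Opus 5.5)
Your proposal is correct and is essentially the paper's argument. The paper also combines the adjunction $(\de_r(a),f_{i_1}\ts\dots\ts f_{i_r})=(a,f_{i_1}\cdots f_{i_r})$ with the dual-basis identity $(e_{i_1}\ts\dots\ts e_{i_r},f_{i_1}\ts\dots\ts f_{i_r})=\ep(\bi)$ to read off the coefficients. It merely asserts both facts, whereas you verify them (by induction via $\de_{r+1}=(\de_r\ts\id)\de$ and by the iterated Koszul pairing). One remark on wording in Step 1: the pairing on $H\ts H$ contributes no net sign, since its two minus signs cancel, so the single minus from $\de=-\De_*$ is what matches $(a,xy)=-\int_X axy$.
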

\begin{proof}
The product and coproduct on $H$ are compatible \wrt the pairing,
hence
$\rbr{\de_r(a),f_{i_1}\ts\dots\ts f_{i_r}}
=\rbr{a,f_{i_1}\dots f_{i_r}}$.
We have
$(e_{i_1}\ts\dots\ts e_{i_r},f_{i_1}\ts\dots\ts f_{i_r})=\ep(\bi)$,
hence
\[\de_r(a)
=\sum_{\bi\in I^r}\ep(\bi)\rbr{\de_r(a),f_{i_1}\ts\dots\ts f_{i_r}}
e_{i_1}\ts\dots\ts e_{i_r}.
\qedhere
\]
\end{proof}

In what follows, we define the \NOP on $\cF\ts H$
by applying the \NOP on $\cF$ and the cup product on $H$
(with the usual Koszul signs).
For homogeneous $g_i\in\cF$ and $a_i\in H$, we have
\[
\nop{(g_1\ts a_1)\dots(g_r\ts a_r)}
=\prod_{i<j}(-1)^{p(a_i)p(g_j)}
\nop{g_1\dots g_r}\ts a_1\dots a_r.
\]

\begin{lemma}\label{A(z)^r}
For $a\in H$ and $r\ge1$, we have
\[\bil{a,\nop{\UD(z)^r}} =\sum_{\bn\in\Zs^r}\fa_\bn(a)z^{-\n\bn},
\qquad
\n\bn=\sum_{i=1}^rn_i.
\]
\end{lemma}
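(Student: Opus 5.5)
We expand $\nop{\UD(z)^r}$ multilinearly, commute the normal ordering past the $H$-factors, and recognize the coproduct formula of the preceding lemma. All the real content is bookkeeping of Koszul signs.

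\emph{Step 1 (expansion).} Write $\UD(z)=\sum_{i\in I}\sum_{n\in\Zs}\fa_n(e_i)z^{-n}\ts f_i$. By multilinearity and the stated rule for the normally ordered product on $\cF\ts H$,
\[
\nop{\UD(z)^r}=\sum_{\bi\in I^r}\sum_{\bn\in\Zs^r}z^{-\n\bn}
\prod_{k<l}(-1)^{p(f_{i_k})p(e_{i_l})}\,
\nop{\fa_{n_1}(e_{i_1})\cdots\fa_{n_r}(e_{i_r})}\ts f_{i_1}\cdots f_{i_r}.
\]
Here I use that $\fa_n$ preserves parity, so $p(\fa_n(e_i))=p(e_i)$. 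I also use that the polarization of $\Fo(V)$ restricted to the coefficient of $z^{-\n\bn}$ is the degree polarization of $\iend(V)$, so the normally ordered product of fields is computed coefficientwise as $\N\fat_\bn(e_{i_1}\ts\cdots\ts e_{i_r})$. Since the pairing is even, $p(f_i)=p(e_i)$, and the sign becomes exactly $\ep(\bi)$.

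\emph{Step 2 (pairing).} Apply $\bil{a,-}$ to the $H$-factor. This is linear, and pulling the operator coefficient out of the pairing introduces at most a sign $(-1)^{p(a)p(\text{operator})}$. Since only terms with $\rbr{a,f_{i_1}\cdots f_{i_r}}\ne0$ survive, parity forces $p(a)=\sum_k p(e_{i_k})$. Depending on the convention for $\bil{a,g\ts b}$, this sign should either be trivial or cancel. I would check the convention against the identity $\bil{a,\UD(z)}=A_a(z)$ in the case $r=1$: that identity itself says the convention is $\bil{a,g\ts b}=g\,(a,b)$, possibly with a Koszul sign built into it. This sign check is the main obstacle and the only place needing care.

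\emph{Step 3 (conclusion).} We obtain
\[
\sum_{\bn}z^{-\n\bn}\,\N\fat_\bn\Bigl(\sum_{\bi}\ep(\bi)\rbr{a,f_{i_1}\cdots f_{i_r}}e_{i_1}\ts\cdots\ts e_{i_r}\Bigr)
=\sum_{\bn}z^{-\n\bn}\,\N\fat_\bn\de_r(a)=\sum_{\bn\in\Zs^r}\fa_\bn(a)z^{-\n\bn},
\]
by the preceding lemma and the definition $\fa_\bn=\N\fat_\bn\de_r$.

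Finally, the sum over $\bn$ is a well-defined degree-zero field. Each coefficient $\fa_\bn(a)$ has degree $-\n\bn$, and on a fixed vector only finitely many terms with a given $\n\bn$ act nontrivially, since there are only finitely many annihilation indices bounded by the vector's degree. This justifies the formal rearrangements above.
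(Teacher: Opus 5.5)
Your proposal is correct and follows the paper's proof essentially line by line. You expand $\nop{\UD(z)^r}$ multilinearly, identify the Koszul sign with $\ep(\bi)$ using $p(f_i)=p(e_i)$, pull $\N\fat_\bn$ out of the sum, and recognize $\de_r(a)$ via the preceding lemma. Your Step 2 worry resolves as you suggest: the paper implicitly uses the sign-free convention $\bil{a,g\ts b}=g\,(a,b)$, which is forced by $\bil{a,\UD(z)}=A_a(z)$, since a Koszul sign would give the same factor $(-1)^{p(a)}$ for every $r$, including $r=1$.
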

\begin{proof}
We have
\begin{multline*}
\bil{a,\nop{\UD(z)^r}}
=\sum_{\bn\in\Zs^r}\rbr{
\sum_{\bi\in I^r}\ep(\bi)\bil{a,f_{i_1}\dots f_{i_r}}
\N\rbig{\fa_{n_1}(e_{i_1})\dots\fa_{n_r}(e_{i_r})}
}z^{-\n\bn}\\
=\sum_{\bn\in\Zs^r}\N\fat_\bn
\rbr{
\sum_{\bi\in I^r}
\ep(\bi)\bil{a,f_{i_1}\cdots f_{i_r}}e_{i_1}\ts\dots\ts e_{i_r}
}z^{-\n\bn}\\
=\sum_{\bn\in\Zs^r}
\N\fat_\bn\de_r(a)z^{-\n\bn}
=\sum_{\bn\in\Zs^r}\fa_\bn(a)z^{-\n\bn}.
\tag*{\qedhere}
\end{multline*}
\end{proof}

Every summand $\AD_{e_i}(z)f_i$ of $\UD(z)$
has parity $p(e_i)+p(f_i)=0$, hence $\UD(z)$ is even.
Therefore we can consider the field $e^{t\UD(z)}$.

\begin{theorem}
For every $a\in H$ and $k\ge0$, we have
\[
\bil{a,\nop{{(-D)^k\UD(z)}\cdot e^{t\UD(z)}}}
=\sum_{\ell(\la)\ge1} \frac{p_k(\la)}{\la!}\fa_\la(a)
t^{\ell(\la)-1}z^{-\n\la}.
\]
\end{theorem}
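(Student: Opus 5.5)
Expand $(-D)^k\UD(z)\cdot e^{t\UD(z)}$ inside the normally ordered product and reduce everything to \cref{A(z)^r}. Since $\UD(z)=\sum_{n\ne0}\sum_i\fa_n(e_i)f_iz^{-n}$ and $D z^{-n}=-nz^{-n}$, we get
\[
(-D)^k\UD(z)=\sum_{n\ne0}\sum_i n^k\fa_n(e_i)f_iz^{-n}.
\]
Expanding the exponential formally, the normally ordered product becomes
\[
\sum_{r\ge0}\frac{t^r}{r!}\nop{(-D)^k\UD(z)\,\UD(z)^r}.
\]
The normally ordered product is multilinear, and it is (super)symmetric in its arguments. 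Symmetry holds because the degree polarization only reorders factors, and factors of equal degree supercommute. Every summand of $\UD$ is even, so no signs arise when moving these composite factors.

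Next apply the argument of \cref{A(z)^r} verbatim, with $r+1$ factors, where the first factor carries the extra weight $n_1^k$. This gives
\[
\bil{a,\nop{(-D)^k\UD(z)\,\UD(z)^r}}=\sum_{\bn\in\Zs^{r+1}}n_1^k\,\fa_\bn(a)\,z^{-\n\bn}.
\]
The proof of \cref{A(z)^r} only uses the expansion of $\de_{r+1}(a)$ in dual bases (the preceding lemma) and the definition $\fa_\bn=\N\fat_\bn\de_{r+1}$. A scalar weight depending on $n_1$ passes through unchanged.

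Now symmetrize. Since $\fa_\bn$ depends only on the multiset of $\bn$ (cocommutativity of $\de$ together with $\N\fat_{\ta\bn}=\N\fat_\bn\circ\ta\i$), we may replace $n_1^k$ by its average $\frac1{r+1}\sum_j n_j^k=\frac{p_k(\bn)}{r+1}$. Then group the sequences $\bn$ by their generalized partition $\la$ of length $r+1$. The number of sequences with underlying multiset $\la$ is $(r+1)!/\la!$. Collecting terms,
\[
\frac{t^r}{r!}\cdot\frac{1}{r+1}\cdot\frac{(r+1)!}{\la!}\,p_k(\la)\,\fa_\la(a)\,z^{-\n\la}
=\frac{p_k(\la)}{\la!}\,\fa_\la(a)\,t^{\ell(\la)-1}z^{-\n\la},
\]
with $\ell(\la)=r+1\ge1$. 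This is exactly the claimed formula.

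The main point requiring care is the symmetry step. The normally ordered product is well defined, and independent of the ordering, as a map on $\F\ts H$ with the stated Koszul signs. Moreover, the weighted sum $\sum_\bn n_1^k\,\N\fat_\bn\de_{r+1}(a)$ may be symmetrized. I would justify this by the identity $\fa_{\ta\bn}=\fa_\bn$, proved from $\de_{r+1}\circ\ta\i$-invariance (super-cocommutativity and coassociativity of $\de$). I would also check that the formal exponential is legitimate: coefficientwise in $t$ only finitely many terms contribute, and each coefficient of $z$ is a well-defined degree zero field by \cref{exp field}.
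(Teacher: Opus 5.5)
Your proposal is correct and follows the paper's proof. You expand $(-D)^k\UD(z)=\sum_{n\ne0}\sum_i n^k\fa_n(e_i)f_iz^{-n}$, apply the argument of \cref{A(z)^r} with the extra weight $n_1^k$ on the first factor, and regroup the sum over $\bn$ by generalized partitions, using that $\fa_\bn$ depends only on the multiset of $\bn$. The only difference is in the final count: the paper counts tuples with prescribed first entry $n$ (there are $(r-1)!\,m_n/\la!$ of them), whereas you first average $n_1^k$ to $p_k(\bn)/(r+1)$ and then count all $(r+1)!/\la!$ rearrangements, which gives the same coefficient.
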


\begin{proof}
We have
$(-D)^k \AD_b(z)=\sum_{n\ne0}n^k\fa_n(b)z^{-n}$.
Applying \cref{A(z)^r}, we obtain
\[
\bil{a,\nop{{(-D)^k\UD(z)}\cdot e^{t\UD(z)}}}
=\sum_{r\ge1}\frac{t^{r-1}}{(r-1)!}
\sum_{\bn\in\Zs^r}n_1^k\fa_{\bn}(a)z^{-\n\bn}.
\]
Let $\la=(i^{m_i})_{i\ne0}$ be a generalized partition
of length $r$.
For a fixed $n\in\Zs$ with $m_n>0$, the number of tuples $\bn\in\Zs^r$ with multiplicities $(m_i)_i$ and first entry $n_1=n$ is
\[
\frac{(r-1)!}{(m_n-1)!\prod_{i\ne n}m_i!}
=\frac{(r-1)!m_n}{\la!}.
\]
Therefore
\[\bil{a,\nop{{(-D)^k\UD(z)}\cdot e^{t\UD(z)}}}
=\sum_{\ell(\la)\ge1}\frac{t^{\ell(\la)-1}}{\la!}
\sum_{n\in\Zs}n^km_n\fa_\la(a)z^{-\n\la}
=\sum_{\ell(\la)\ge1}\frac{p_k(\la)t^{\ell(\la)-1}}{\la!}\fa_\la(a) z^{-\n\la}.
\qedhere
\]
\end{proof}

\begin{corollary}
\label{pk:cor}
For $a\in H$, $k\ge0$ and $r\ge1$, we have
\[
\sum_{\ov{\n\la=0}{\ell(\la)=r}}\frac{p_k(\la)}{\la!}\fa_\la(a)
=(-1)^k[z^0t^{r-1}]\bil{a,\nop{D^k\UD(z)\cdot e^{t\UD(z)}}}
\]
which is a linear combination of expressions
\[[z^0]\nop{D^k\AD_{a_1}(z)\cdot \AD_{a_2}(z)\dots \AD_{a_r}(z)},
\qquad
a_1,\dots,a_r\in H.\]
\end{corollary}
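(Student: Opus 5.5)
The plan is to read the corollary off the preceding theorem by extracting coefficients, and then to expand the pairing in a basis.

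\textbf{Step 1: the identity.} In the theorem just proved, extract the coefficient of $z^0t^{r-1}$ on both sides. On the right-hand side, the power $t^{\ell(\la)-1}$ forces $\ell(\la)=r$, and the power $z^{-\n\la}$ forces $\n\la=0$. This leaves exactly $\sum_{\n\la=0,\ell(\la)=r}\frac{p_k(\la)}{\la!}\fa_\la(a)$. On the left-hand side, $(-D)^k=(-1)^kD^k$ because $D$ is linear. The pairing $\bil{a,-}$ is $\F$-linear in the $H$-factor, so it commutes with taking the coefficients $[z^0t^{r-1}]$. The displayed identity follows.

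\textbf{Step 2: reduce to finitely many normally ordered products.} Expand $e^{t\UD(z)}=\sum_m\frac{t^m}{m!}\UD(z)^m$ inside the normal ordering. The coefficient of $t^{r-1}$ is then $\frac1{(r-1)!}\nop{D^k\UD(z)\cdot\UD(z)^{r-1}}$. Substitute $\UD(z)=\sum_i\AD_{e_i}(z)f_i$; since $D$ acts only on the field factor, $D^k\UD(z)=\sum_i D^k\AD_{e_i}(z)f_i$. By the rule for normally ordered products on $\F\ts H$, we get
\[
\nop{D^k\UD(z)\cdot\UD(z)^{r-1}}=\sum_{\bi\in I^r}\pm\,\nop{D^k\AD_{e_{i_1}}(z)\AD_{e_{i_2}}(z)\dots\AD_{e_{i_r}}(z)}\ts f_{i_1}\cdots f_{i_r},
\]
where the signs are the Koszul signs.

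\textbf{Step 3: pair with $a$.} Pairing with $a$ gives a finite sum, with scalar coefficients $\pm\bil{a,f_{i_1}\cdots f_{i_r}}$, of the fields $\nop{D^k\AD_{e_{i_1}}(z)\dots\AD_{e_{i_r}}(z)}$. Take $a_j=e_{i_j}$. Applying $[z^0]$ coefficientwise then exhibits the left-hand side as a linear combination of the operators $[z^0]\nop{D^k\AD_{a_1}(z)\cdots\AD_{a_r}(z)}$.

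\textbf{Main obstacle.} The only point needing care is well-definedness. The normally ordered products must be well-defined degree-zero fields, so that taking $[z^0]$ is legitimate. This holds because each $\AD_b(z)$ lies in the polarized algebra $\Fo(V)$, as in \cref{exp field}, and because $I$ is finite. Sign bookkeeping is irrelevant, since the claim is only that the left-hand side lies in the linear span.
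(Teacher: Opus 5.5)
Your proposal is correct and is essentially the argument the paper intends. The paper gives no separate proof: it reads the identity off as the $[z^0t^{r-1}]$-coefficient of the preceding theorem, using $(-D)^k=(-1)^kD^k$, and the spanning claim follows by expanding $\UD(z)=\sum_i\AD_{e_i}(z)f_i$ in dual bases with Koszul signs, exactly as in the proof of \cref{A(z)^r}.
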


In particular, we have
\begin{gather}
\sum_{\ov{\n\la=0}{\ell(\la)=r}}\frac{\fa_\la(a)}{\la!}
=\frac{1}{r!}[z^0]\bil{a,\nop{\UD(z)^r}},\\
\sum_{\ov{\n\la=0}{\ell(\la)=r}}
\frac{p_2(\la)}{\la!}\fa_\la(a)
=\frac{1}{(r-1)!}
[z^0]\bil{a,\nop{D^2\UD(z)\cdot \UD(z)^{r-1}}}.
\end{gather}

\subsection{Operator \tpdf{$\W$}W}
Given a line bundle $L$ on $X$, let
\[\sE=\sE(m,n)\in K(\hb Xm\xx \hb Xn)\]
be the class (virtual bundle) such that its fiber over
$(\eta,\ksi)\in \hb Xm\xx \hb Xn$ is
\[\sE|_{(\eta,\ksi)}
=\hi(\cO_X,L)-\hi(I_\ksi,I_\eta\ts L),
\]
where $I_\ksi\sbs\cO_X$ is the ideal sheaf of $\ksi\in\hb Xn$
(see \cite{carlsson_vertex,carlsson_exts}).
In particular, if $L=\cO_X$,
then $\De_n^*\sE(n,n)\iso T_{\hb Xn}$
for the diagonal $\De_n:\hb Xn\to\hb Xn\xx\hb Xn$.
More generally, if $L=\cO_X\ts e^t$
is the trivial line bundle equipped with the $\Gm$-action such that $c_1(L)=t\in X^*(\Gm)\iso\bZ$,
then
\begin{equation}\label{E on diag}
\De_n^*\sE(n,n)\iso T_{\hb Xn}\ts e^t.
\end{equation}

Define the even operator $\W(L,z):\bH\to \bH\lser z$ by
\eq{\bil{\W(L,z)a,b}
=z^{n-m}\int_{\hb Xm\xx\hb Xn}(a\ts b)\cdot \fe(\sE(m,n)),\qquad
a\in\bH_m,\, b\in\bH_n,
}
where $\fe=c_{\top}$ is the Euler class.
Note that $\W(L,z)a\in z^{-m}\bH\pser z$.
Writing $\W(L,z)=\sum_{n\in\bZ} f_n z^{n}$,
we obtain $f_{n-m}(a)\in\bH_n$, hence $\deg f_{n-m}=n-m$ and
$\W(L,z)$ is a field of degree zero.

The importance of the operator $\W$ stems from the following observation (\cf \cite[\S2.1.1]{carlsson_exts}).

\begin{lemma}\label{int as trace}
Let $\Phi=(\Phi(n))_{n\ge0}\in\prod_{n\ge0}\bH_n$
and let $\Phi_\bul\in\End(\bH)$ be the corresponding cup product operator.
Then
\[\sum_{n\ge0}q^n\int_{\hb Xn}\Phi(n)\cdot \fe_t(T_{\hb Xn})
=\str\rbr{\qd \W(L,z)\Phi_\bul}
=\str\rbr{\qd \Phi_\bul \W(L,z)},
\]
where $L=\cO_X\ts e^t$
and $\fe_t(F)=\fe(F\ts e^t)=\prod_{i}(t+x_i)$ for a vector bundle $F$ with the Chern roots $(x_i)_i$.
\end{lemma}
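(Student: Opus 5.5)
We will compute the supertrace on the right-hand side degree by degree and reduce it to an integral over the diagonal. Since $\Phi_\bul$ is a cup product operator, it preserves each $\bH_n$. Only the degree-zero component of $\qd\W(L,z)\Phi_\bul$ contributes to the supertrace. Writing $\W(L,z)=\sum_k f_kz^k$ with $\deg f_k=k$, that component is $f_0$, the coefficient of $z^0$. On $\bH_n$ it is the map $u\mto f_0(u)$, whose pairing with $v\in\bH_n$ is
\[
\bil{f_0 u,v}=\int_{\hb Xn\xx\hb Xn}(u\ts v)\cdot\fe(\sE(n,n)),
\]
because the factor $z^{n-m}$ equals $1$ for $m=n$. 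So $\str(\qd\W(L,z)\Phi_\bul)=\sum_n q^n\str(f_0\Phi_\bul|_{\bH_n})$, and in particular the result is independent of $z$.

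Next we identify the supertrace on $\bH_n$ with a diagonal integral, using \cref{str and C}. The operator $g=f_0\circ\Phi(n)\cdot$ on $\bH_n$ is a correspondence given by the class $\ka=(\Phi(n)\ts1)\cdot\fe(\sE(n,n))\in H^*(\hb Xn\xx\hb Xn)$, in the sense that $\bil{gu,v}=\int(u\ts v)\ka$.

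\begin{itemize}
\item By Künneth and Poincaré duality, $\ka$ is the reproducing kernel $C_g$ in the sense of \cref{repr ker}, up to the normalization of the pairing.
\item The pairing on $\bH_n$ is the intersection pairing, with the sign conventions that were used to define $\fa_n$ as superadjoints.
\item \Cref{str and C} then gives $\str(g)=\sum_k(x_k,y_k)$ for $\ka=\sum_k x_k\ts y_k$.
\item This sum is precisely $\int_{\hb Xn}\De_n^*\ka$, which is the standard Lefschetz-type formula: the supertrace of a correspondence is the integral of its restriction to the diagonal.
\end{itemize}

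Finally, $\De_n^*\ka=\Phi(n)\cdot\fe(\De_n^*\sE(n,n))$. By \cref{E on diag}, $\De_n^*\sE(n,n)\iso T_{\hb Xn}\ts e^t$, so its Euler class is $\fe_t(T_{\hb Xn})$, which yields the first equality. For the second equality we use the cyclicity $\str(ab)=\str(ba)$ on each $\bH_n$. This applies because $f_0$ and $\Phi_\bul$ are both degree zero and even (parity of $\Phi$ is irrelevant, since odd parts contribute zero on both sides). Alternatively, one can repeat the computation with the kernel $(1\ts\Phi(n))\fe(\sE)$, which has the same diagonal restriction.

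The main obstacle is the sign and normalization bookkeeping in the second step. We must check that the intersection pairing on $\bH_n$, with Koszul signs in the pairing on $\bH_n\ts\bH_n$, makes the Künneth class $\ka$ literally equal to $C_g$. We must also check that $\sum_k(x_k,y_k)$ equals $\int\De_n^*\ka$, and not its negative or a variant twisted by parity. We would verify this first on a single factor $x\ts y$, where $\int\De^*(x\ts y)=\int xy$, and compare it with the pairing convention. This step also needs $\hb Xn$ to be smooth and projective, or equivariantly localized in the torus setting.
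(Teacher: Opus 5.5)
Your proposal is correct and follows essentially the paper's argument. You restrict to $[z^0]\W(L,z)$ on each $\bH_n$ and recognize $(\Phi(n)\ts 1)\fe(\sE(n,n))$ as the reproducing kernel of $[z^0]\W\circ\Phi_\bul$. You then apply \cref{str and C}, identify $\sum_k(x_k,y_k)$ with $\int\De_n^*\bigl((\Phi(n)\ts 1)\fe(\sE(n,n))\bigr)$, and finish with \cref{E on diag}, getting the second equality from cyclicity of the supertrace for even degree-zero operators. The sign check you flag works out exactly: $\int_{Y\times Y}(a\ts b)\cdot(x\ts y)=(-1)^{p(b)p(x)}\int_Y ax\int_Y by$ is precisely the pairing on $V\ts V$ used in \cref{repr ker}, so no normalization correction is needed.
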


\begin{proof}
We can assume that $\Phi(n)\in\bH_n^{\b0}$, since the contribution of odd components is zero.
The operators $q^\bd$ and $\Phi_\bul$ commute,
hence
$\str\rbr{\qd \W\Phi_\bul}
=\str\rbr{\Phi_\bul \qd \W}
=\str\rbr{\qd \Phi_\bul \W}
.$
To determine the trace we can restrict to $[z^0]\W(L,z)\in\End(\bH)$
which has degree zero.

Let $A=H^*(\hb Xn,\bQ)=\bH_n$ with its Frobenius structure $(\mu,\eta,\de,\eps)$
and the pairing
\[\eps\mu:A\ts A\to\bQ,\qquad a\ts b\mto\int_{\hb Xn}ab.\]
Let $u=\Phi(n)$,
$f=[z^0]\W(L,z)|_{\bH_n}$ and $C_f=\fe(\sE)\in A\ts A$
(\cf \cref{repr ker}).
\def\u{u}
For $a,b\in A$, we have
\[(f\Phi_\bul a,b)=(\u a\ts b,C_f)=(a\ts b,(\u\ts 1)C_f).\]
Therefore $(\u\ts 1)C_f$ is the reproducing kernel of $f\Phi_\bul$
\cref{repr ker},
hence by \cref{str and C}
\[\str(f\Phi_\bul)=\eps\mu((\u\ts 1)C_f)=\eps(\u\cdot\mu (C_f))
=\int_{\hb Xn}\Phi(n)\cdot \mu (C_f)
.\]
We have $\mu (C_f)=\De_n^* \fe(\sE)
=\fe_t(T_{\hb Xn})$ by \cref{E on diag}.
\end{proof}

Crucially, the operator $\W$ is a vertex operator.
Let $\fa_n(L)=\fa_n(a)$ for $a=c_1(L)$.

\begin{theorem}
[See {\cite[Theorem 1]{carlsson_vertex}}
and {\cite[Theorem 1]{carlsson_exts}}]
We have
\[\W(L,z)
=\exp\rbr{\sum_{n>0}\frac{z^n}n\fa_{-n}(L-K_X)}
\exp\rbr{\sum_{n>0}\frac{z^{-n}}n\fa_n(-L)}
\]
\end{theorem}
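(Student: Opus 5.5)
The plan is to characterize $\W(L,z)$ by its commutation relations with the Nakajima operators together with a single matrix coefficient. This determines it uniquely, because $\bH\iso V$ is an irreducible $\hh$-module. Write $\Gamma(z)$ for the right-hand side of the theorem; by \cref{exp field} it is a well-defined even field of degree zero. Using $[\fa_m(a),\fa_n(b)]=m\de_{m,-n}(a,b)$ and the fact that $\ad_{\fa_k(b)}$ acts on each exponential as a scalar derivation, one gets directly
\[
[\fa_k(b),\Gamma(z)]=(b,L-K_X)\,z^{k}\,\Gamma(z)\quad(k>0),\qquad
[\fa_{-k}(b),\Gamma(z)]=(b,L)\,z^{-k}\,\Gamma(z)\quad(k>0).
\]
The signs have to be checked against the pairing \cref{pair}. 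Moreover $(\Gamma(z)\one,\one)=1$. So it suffices to prove the same two statements for $\W(L,z)$.

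The vacuum coefficient is immediate. For $m=n=0$ we have $\sE(0,0)=\hi(\cO_X,L)-\hi(\cO_X,L)=0$, so $\fe(\sE)=1$ and $(\W(L,z)\one,\one)=1$.

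For the commutators, I would follow \cite{carlsson_exts}. By adjointness ($\fa_k=(-1)^k\fa_{-k}^*$) and the symmetric roles of the two factors, it is enough to treat creation operators $\fa_{-k}(b)$ acting on either side. Pairing with $a\in\bH_m$ and $c\in\bH_{n}$ expresses $(\fa_{-k}(b)\W a,c)$ and $(\W\fa_{-k}(b)a,c)$ as integrals over $\hb Q{n,n-k}\xx\hb Xm$ and $\hb Xn\xx\hb Q{m+k,m}$ of $\fe(\sE)$ pulled back from the two ends. On the nested correspondence, with $\ksi\sps\ksi'$ and $\supp(\ksi-\ksi')=\set x$, the difference $\sE(m,n)-\sE(m,n-k)$ is the class of $-\hi(I_{\ksi'}/I_\ksi,I_\eta\ts L)$ (up to sign conventions). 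This class is supported at the point $x$ and has length $k$ in $\ksi$. Its Euler class should factor as the pullback from $X$ of the appropriate class ($c_1(L)$, respectively $c_1(L)-K_X$ after Serre duality $\Ext^i(F,G)\iso\Ext^{2-i}(G,F\ts K_X)^\vee$), times the class of the correspondence. This is the step producing the factor $z^{\pm k}$ and $(b,L)$ or $(b,L-K_X)$.

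The main obstacle is this geometric step: making the Euler-class factorization rigorous on the (singular) nested Hilbert schemes. Here I would first try, as Carlsson–Okounkov do, to reduce to a torus-equivariant statement for $X=\bC^2$ via localization and a universality argument. For $\bC^2$, the fixed points are pairs of partitions, and the identity can be checked on the generators $\fa_{\pm1}$ plus the Heisenberg relations; this should suffice because $\fa_{\pm k}$ are generated from $\fa_{\pm1}$ and cup products with $\fG_k$. Once the commutation relations hold, $\Gamma(z)^{-1}\W(L,z)$ is a degree-zero field that commutes coefficientwise with all of $\hh$. By irreducibility of $V$ (graded, finite-dimensional pieces, Schur's lemma) it is a scalar series, and the vacuum coefficient forces it to equal $1$.
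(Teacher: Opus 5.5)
The paper gives no proof of this statement; it is imported from Carlsson and Carlsson--Okounkov. Your outer frame (commutation relations with $\hh$, the vacuum coefficient, irreducibility) is sound after two repairs.

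\begin{itemize}
\item The adjoint step needs Serre duality, which identifies $\W(L,z)^*$ with $\W(K_X-L,z^{-1})$ up to signs. The two factors do not play symmetric roles.
\item $\Gamma(z)^{-1}\W(L,z)$ is not a well-defined field in general. Write $\Gamma=\Gamma_-\Gamma_+$, with $\Gamma_-$ the creation and $\Gamma_+$ the annihilation exponential. Then $\Gamma_+^{-1}\Gamma_-^{-1}$ is anti-normally ordered, and reordering it produces the divergent scalar $\exp\bigl(-(L,L-K_X)\sum_{n>0}\frac1n\bigr)$. Use $\Gamma_-^{-1}\W(L,z)\Gamma_+^{-1}$ instead. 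It is well defined and, given the relations, supercommutes with $\hh$.
\end{itemize}

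The genuine gap is that the relation $[\fa_{-k}(b),\W(L,z)]=(b,L)z^{-k}\W(L,z)$ is never proved, and it is the entire content of the theorem. Up to signs, its matrix coefficient between $a\in\bH_m$ and $c\in\bH_n$ reads
\[
\int_{\hb Xm\times\hb Q{n,n-k}}a\,b\,c\,\fe(\sE(m,n-k))
-\int_{\hb Q{m+k,m}\times\hb Xn}a\,b\,c\,\fe(\sE(m+k,n))
=(b,L)\int_{\hb Xm\times\hb Xn}a\,c\,\fe(\sE(m,n)).
\]
This is an identity between integrals over three different spaces.

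The class you isolate is $\sE(m,n)-\sE(m,n-k)=\hi(I_{\ksi'}/I_\ksi,I_\eta\ts L)$, with a plus sign, since it has rank $k$. It only relates $\fe(\sE(m,n))$ to $\fe(\sE(m,n-k))$ multiplicatively on the first correspondence, so it cannot by itself produce the identity above. For $k\ge2$ its Euler class has degree $k$. It involves the Chern classes of the rank-$k$ bundle $I_{\ksi'}/I_\ksi$ on a correspondence that is singular in general, so it is not ``$c_1(L)$ times the correspondence class''. One must also control the locus $x\in\supp\eta$.

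Your fallbacks do not close the gap either.
\begin{itemize}
\item The Heisenberg relations do not produce $\fa_{\pm k}$ from $\fa_{\pm1}$. Producing them with Lehn's boundary operator $\fG_1(1)$ (cup product with $c_1(\cO^{[n]})$) requires knowing $[\fG_1(1),\W]$, which is not a scalar multiple of $\W$. By the Jacobi identity, $[[\fG_1(1),\fa_{-1}(b)],\W]$ contains $[\fa_{-1}(b),[\fG_1(1),\W]]$. So you would need an explicit Heisenberg formula for $[\fG_1(1),\W]$, which amounts to knowing $\W$ already.
\item The reduction to equivariant $\bC^2$ by ``localization and universality'' is asserted, not argued. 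This is an operator identity on a projective surface with arbitrary cohomological insertions, not a tautological integral to which a universality theorem applies directly. Even on $\bC^2$, the fixed-point check against $\fa_{\pm1}$ is a nontrivial combinatorial identity.
\end{itemize}
What is missing is an actual geometric computation of $[\fa_{-k}(b),\W(L,z)]$ for every $k\ge1$.
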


\begin{remark}
Our discrepancy with the signs in \cite{carlsson_vertex}
stems from the fact that our Nakajima operators are related to
\cite{carlsson_vertex} via
$\fa_{-n}=\fa_{-n}^C$ and $\fa_n=-\fa^C_n$ for $n>0$.
\end{remark}

The creation and annihilation parts
of the free boson field $\BF_a(z)$ \cref{free bos} are
\begin{equation}
\BF_a(z)_+=\sum_{n>0}\frac{z^n}n a_{-n},\qquad
\BF_a(z)_-=\sum_{n>0}\frac{z^{-n}}{-n}a_{n}.
\end{equation}
Writing $\BF_L(z)=\BF_a(z)$ for $a=c_1(L)$, we obtain
\begin{equation}
\W(L,z)
=\exp\rbr{\BF_{L-K_X}(z)_+}\cdot \exp\rbr{\BF_L(z)_-}.
\end{equation}
In particular, for $K_X=0$, we obtain
\begin{equation}\label{W for K=0}
\W(L,z)=e^{\BF_{L}(z)_+} e^{\BF_L(z)_-}
=\nop{e^{\BF_L(z)}}.
\end{equation}
If $L=\cO_X\ts e^t$, then $a=c_1(L)=t$
(in $H\ts_\bQ\bQ[t]$)
and $(a,a-K_X)=0$.
Therefore \cref{ta(f)} implies
\eq{\ta(\W)=\frac{\str(q^\d \W)}{\str(q^\d)}=1.}


\subsection{Summary}
To summarize,
by \cref{int as trace}, we have
\[\ang{\ch_{k_1}^{E_1}\dots\ch_{k_r}^{E_r}}
=\sum_{n\ge0}q^n\int_{\hb Xn}\prod_{i=1}^r\ch_{k_i}(\hb{E_i}n)
\cdot\fe_t(T_{\hb Xn})\\
=\str_q\rbr{\W(L,z)\prod_{i=1}^r\bch_{k_i}^{E_i}},
\]
where $L=\cO_X\ts e^t$ and $\bch_k^E\in\End(\bH)$ denotes the cup product operator
corresponding to $\ch_k(\hb En)\in\bH_n$ for $n\ge0$.
We want to prove quasi-modularity of the normalized $q$-trace
\[\ang{\ch_{k_1}^{E_1}\dots\ch_{k_r}^{E_r}}'
=\ta\rbr{\W(L,z)\prod_{i=1}^r\bch_{k_i}^{E_i}}.
\]
Because of \cref{ch as G},
we can consider instead, for $a_1,\dots,a_r\in H$
and $k_1,\dots,k_r\ge0$,
the generating series
\[
\sum_{n\ge0}q^n\int_{\hb Xn}\prod_{i=1}^rG_{k_i}(a_i,n)
\cdot\fe_t(T_{\hb Xn})
=\str_q\rbr{\W(L,z)\prod_{i=1}^r\fG_{k_i}(a_i)}.
\]
and the normalized $q$-trace
\begin{equation}
\ta\rbr{\W(L,z)\prod_{i=1}^r\fG_{k_i}(a_i)}.
\end{equation}
For $K_X=0$, we have seen that
$\W(L,z)$ can be expressed in terms of the free boson field~$\BF_L(z)$.
On the other hand, the operators $\fG_k(a)$ can be expressed in terms of descendants of free boson fields by
\cref{LQW:th} and \cref{pk:cor}.
Therefore our ultimate goal is to prove quasi-modularity of normalized $q$-traces of operators expressible in terms of free boson fields and their descendants.

\section{Constant term theorem}

\subsection{Special functions}
Given a domain $D$ (open connected subset of $\bC^n$),
let $\cO(D)$ denote the ring of holomorphic functions on $D$ and let  $\cM(D)$ denote the field of meromorphic functions on $D$, which is the field of fractions of $\cO(D)$.
Consider the domains in $\bC^{r+1}$
\begin{gather}
X_r=(\bC^\xx)^r\xx\sets{q\in\bC}{0<|q|<1},\\
U_r=\sets{(z_1,\dots,z_r,q)\in \bC^{r+1}}
{1>\n{z_1}>\cdots>\n{z_r}>\n q>0}\sbs X_r.
\end{gather}

Define the $q$-Pochhammer symbol (we use formal expansion)
\begin{equation}\label{poch}
\ph z=\prod_{i\ge0}(1-q^iz)
=\exp\rbr{-\sum_{n>0}\frac1n\frac{z^n}{1-q^n}}.
\end{equation}

Define the theta function
\begin{equation}
\Te(z,q)=\frac{\ph z\ph{z\i q}}{\ph q^2}
\in\cO(X_1),
\end{equation}
which has zeros only at $z=q^n$, $n\in\bZ$.
Using \cref{poch},
we obtain
\begin{gather}
\log\Te(z,q)=\Phi_0(z,q)+2F_0(q),\\
\Phi_0(z,q)
=-\sum_{n>0}\frac1n\frac{z^n+q^nz^{-n}}{1-q^n},\qquad
F_0(q)=\sum_{n>0}\frac1n\frac{q^n}{1-q^n}.
\end{gather}
More generally, define (using $D=z\dd_z$)
\begin{gather}
\label{Phi_k}
\Phi_k(z,q)=D^k\Phi_0(z,q)
=D^{k}\log\Te(z,q)\qquad k\ge1,\\
\label{F_k}
F_k(q)=\sum_{n>0}n^{k-1}\frac{q^n}{1-q^n}
=\sum_{d,n>0}n^{k-1}q^{nd},\qquad k\ge0.
\end{gather}

The function
\[\Phi_1=D\log\Te=D\Te/\Te
\in\cM(X_1)\]
has poles only at $z=q^n$ and the same applies to $\Phi_k=D^{k-1}\Phi_1$, $k\ge1$.
Moreover,
\begin{gather}
\Te(qz,q)=-z\i\Te(z,q)=\Te(z\i,q),\\
\Phi_1(qz,q)=\Phi_1(z,q)-1=-\Phi_1(z\i,q),\\
\Phi_k(qz,q)=\Phi_k(z,q)
=(-1)^{k}\Phi_k(z\i,q),\qquad k\ge2.
\label{Phi(1/z)}
\end{gather}

Define the Eisenstein series (for even $k\ge1$)
\begin{equation}
E_k(q)=1-\frac{2k}{B_k}\sum_{n,d\ge1}n^{k-1}q^{nd}
=1-\frac{2k}{B_k}F_k(q),
\end{equation}
where $B_k\in\bQ$ are Bernoulli numbers.
The algebra of quasi-modular forms
\begin{equation}
\cC_0=\QM(1,\bQ)
=\bQ[E_2,E_4,E_6]\sbs\bQ\pser q,
\end{equation}
contains all $E_{2k}$ for $k\ge1$, hence $F_{2k}\in \cC_0$ for $k\ge1$.
The Eisenstein series are holomorphic on the unit disc,
hence $\cC_0\sbs \cO(X_0)$.

\begin{lemma}\label{Phi coeff}
We have $\Te(e^s,q)\in s \cC_0\pser s$ and
$\Phi_k(e^s,q)\in \cC_0\lser s$ for $k\ge1$.
\end{lemma}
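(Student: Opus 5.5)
We need to show that $\Te(e^s,q)\in s\,\cC_0\pser s$ and that $\Phi_k(e^s,q)\in\cC_0\lser s$ for $k\ge1$. The plan is to compute the Laurent expansion of $\Phi_1(e^s,q)$ in $s$ explicitly. The coefficients will turn out to be Eisenstein series $F_{2m}$ (plus rational constants), and the claims for $\Theta$ and for $\Phi_k$ then follow by integrating and exponentiating, respectively by differentiating in $s$.

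\emph{Step 1: expansion of $\Phi_1$.} Write $\Te(z,q)=(1-z)\,\ph{qz}\ph{qz\i}/\ph q^2$. Since $D=z\dd_z$ becomes $\dd_s$ under $z=e^s$, applying $D\log$ and using \cref{poch} gives
\[
\Phi_1(z,q)=\frac{-z}{1-z}-\sum_{n>0}\frac{q^n(z^n-z^{-n})}{1-q^n}.
\]
Substituting $z=e^s$, the first term is $\frac{1}{s}-\frac12+\sum_{m\ge1}\frac{B_{2m}}{(2m)!}s^{2m-1}\in\bQ\lser s$. This is the Bernoulli generating function $\frac{1}{e^{s}-1}$, up to sign and shift.

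For the second term, expand $z^n-z^{-n}=2\sum_{j\text{ odd}}\frac{n^js^j}{j!}$. This gives
\[
-2\sum_{j\text{ odd}}\frac{s^j}{j!}\sum_{n>0}n^j\frac{q^n}{1-q^n}=-2\sum_{j\text{ odd}}\frac{s^j}{j!}F_{j+1}(q).
\]
Each $F_{j+1}$ with $j+1$ even lies in $\cC_0$. Interchanging the sums over $n$ and $j$ is legitimate: for fixed $q$ with $|q|<1$ and small $|s|$ the double series converges absolutely, and it also holds as an identity of formal series in $\bQ\pser q\pser s$. Hence $\Phi_1(e^s,q)\in s^{-1}\bQ+\cC_0\pser s\sbs\cC_0\lser s$.

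\emph{Step 2: derivatives.} Since $\Phi_k=D^{k-1}\Phi_1$ and $D=\dd_s$, we get $\Phi_k(e^s,q)=\dd_s^{k-1}\Phi_1(e^s,q)$. The ring $\cC_0\lser s$ is closed under $\dd_s$, which settles the claim for all $k\ge1$.

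\emph{Step 3: the theta function.} Write $\Phi_1(e^s,q)=\frac1s+g(s)$ with $g\in\cC_0\pser s$. Then
\[
\dd_s\log\bigl(\Te(e^s,q)/s\bigr)=g(s),
\]
so $\Te(e^s,q)=s\,c(q)\exp\bigl(\int_0^s g\bigr)$, where $c(q)=\lim_{s\to0}\Te(e^s,q)/s$. From the product formula, $\Te(e^s,q)/s\to -\ph q^2/\ph q^2=-1$, so $c=-1$. Because $\cC_0$ is a $\bQ$-algebra, $\int_0^s g\in s\,\cC_0\pser s$ and its exponential lies in $\cC_0\pser s$. Therefore $\Te(e^s,q)\in s\,\cC_0\pser s$.

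The only real care point is the $q\leftrightarrow s$ interchange in Step 1 together with the constant $c(q)$. Both are straightforward, so no serious obstacle is expected.
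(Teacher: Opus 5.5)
Your proof is correct and is essentially the paper's argument run in reverse order: the paper expands $\log\bigl(\Te(z,q)/(1-z)\bigr)=\sum_{n\ge1}\frac1n\frac{(2-z^n-z^{-n})q^n}{1-q^n}$ at $z=e^s$ to get $\Te(e^s,q)\in s\,\cC_0\pser s$ first and then takes $\Phi_1=\dd_s\Te/\Te$, whereas you expand $\Phi_1$ first and recover $\Te$ by integrating. This costs you the separate constant $c(q)=-1$, but in return gives the explicit formula $\Phi_1(e^s,q)=\frac1s+\frac12+\sum_{m\ge1}\frac{B_{2m}}{(2m)!}E_{2m}(q)\,s^{2m-1}$. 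One harmless slip: $-e^s/(1-e^s)=1+\frac{1}{e^s-1}$, so its constant term is $+\frac12$, not $-\frac12$ as you wrote.
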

\begin{proof}
We have $\log\frac1{(z;q)_\infty}=\sum_{n\ge1}\frac1n\frac{z^n}{1-q^n}$, hence
\[\log\frac{\Te(z,q)}{1-z}
=\sum_{n\ge1}\frac1n\frac{(2-z^n-z^{-n})q^n}{1-q^n}.
\]
Using $z=e^s$,
the coefficient of $s^{k}/k!$ is zero for odd $k$
and it is
\[-2\sum_{n\ge1}\frac{n^{k-1}q^n}{1-q^n}
=-2F_k\in\cC_0\]
for even $k\ge2$.
Therefore $\frac{\Te(e^s,q)}{1-e^s}\in \cC_0\pser s$, hence $\Te(e^s,q)\in s \cC_0\pser s$.
Using $D=z\dd_z=\dd_s$,
we obtain $\Phi_1(e^s)=D\Te(e^s)/\Te(e^s)\in\cC_0\lser s$.
Therefore $\Phi_k(e^s)=D^{k-1}\Phi_1(e^s)\in\cC_0\lser s$ for $k\ge1$.
\end{proof}

\begin{remark}
We assign weight $2k$ to $E_{2k}$
and equip the algebra $\cC_0$ with the increasing weight filtration $(W_m\cC_0)_{m\in\bZ}$, where $W_m\cC_0$ consists of quasi-modular forms whose homogeneous components have weight at most $m$.
In particular, $F_{2k}\in W_{2k}\cC_0$.
If $A$ is an algebra equipped with an increasing filtration $(W_mA)_{m\in\bZ}$,
we equip the algebra $B=A\lser s$ with the increasing filtration
\[W_mB=\sets{\sum*_{i}f_is^i}{f_i\in W_{m+i}A},\qquad m\in\bZ,\]
so that $s$ has weight $-1$.
We have
\[
\log\frac{\Te(e^s,q)}{1-e^s}
=-2\sum_{\ov{m\ge2}{2\mid m}}F_m(q)\frac{s^m}{m!}.
\]
Since $\dd_s^k\log(1-e^s)\in\bQ\lser s$ has order $-k$,
applying $\dd_s^k$ and taking the coefficient of $s^l$ gives
\eq{\label{weights of Phi_k}
[s^l]\Phi_k(e^s,q)\in W_{k+l}\cC_0,\qquad k\ge1,\, l\in\bZ.}
This implies that $\Phi_k(e^s,q)\in\cC_0\lser s$ has weight $\le k$.
\end{remark}

\subsection{Special algebras}
For $i\in[r]$, define the operators on $\cM(X_r)$
\[T_if=f(z_1,\dots,qz_i,\dots,z_r).\]
A function $f$ is called elliptic if $T_if=f$ for all $i$.
Note that $\Phi_k(z_i)$ and $\Phi_k(z_i/z_j)$ are elliptic for $k\ge2$, while $T_i\Phi_1(z_j)=\Phi_1(z_j)-\de_{ij}$.
Define the quasi-elliptic algebra
\begin{equation}\label{Cr}
\begin{aligned}
\cC_r=\cC_r(z_0,\dots,z_r)
&=\cC_0\sbr{\Phi_k(z_j/z_i)\col 0\le i\ne j\le r,\,k\ge1}\\
&\iso \cC_0\sbr{\Phi_k(z_i),\Phi_k(z_j/z_i)\col 1\le i\ne j\le r,\,k\ge1}
\sbs\cM(X_r),
\end{aligned}
\end{equation}
where the isomorphism is obtained by fixing $z_0=1$.
Similarly, define the elliptic algebra
\[\cE_r
=\cC_0\sbr{\Phi_k(z_i),\Phi_k(z_j/z_i)\col
1\le i\ne j\le r,\,k\ge2}\sbs \cM(X_r).\]
In particular, $\cE_0=\cC_0$.
Note that it is enough to consider only $\Phi_k(z_j/z_i,q)$ for $i<j$ because of~\cref{Phi(1/z)}.
The operators $T_i$ preserve $\cC_r$.

We equip the algebra $\cC_r$ with the increasing weight filtration
$(W_m\cC_r)_{m\in\bZ}$, where $W_m\cC_r=0$ for $m<0$,
while $E_{2k}$ has weight $2k$ and
$\Phi_k(z_j/z_i)$ has weight $k$.
The operators $T_i$ preserve the weight filtration.

\begin{lemma}
\label{T is surj}
The map $T_i-1:W_{m+1}\cC_r\to W_m\cC_r$ is surjective for all $m\ge0$.
In particular, $T_i-1:\cC_r\to\cC_r$ is surjective.
\end{lemma}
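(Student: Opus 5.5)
The plan is to single out one generator that carries all of the non-invariance under $T_i$, and then invert $T_i-1$ degree by degree in that generator. I work with the presentation of $\cC_r$ obtained by fixing $z_0=1$. I take $W_m\cC_r$ to be the $\cC_0$-span (with $\cC_0$-coefficients filtered as before) of monomials in the generators $\Phi_k(z_l)$, $\Phi_k(z_l/z_j)$ of total weight at most $m$. Set $y=\Phi_1(z_i)$, which has weight $1$ and satisfies $T_iy=y-1$. The other generators behave as follows under $T_i$:
\begin{itemize}
\item $\Phi_k$ with $k\ge2$ is fixed, since it is elliptic.
\item A generator not involving $z_i$ is fixed.
\item $T_i\Phi_1(z_i/z_j)=\Phi_1(z_i/z_j)-1$.
\item $T_i\Phi_1(z_j/z_i)=\Phi_1(q^{-1}z_j/z_i)=\Phi_1(z_j/z_i)+1$, by the quasi-periodicity $\Phi_1(qz)=\Phi_1(z)-1$.
\end{itemize}
Accordingly, I introduce the $T_i$-invariant weight-one elements
\[
u_j=\Phi_1(z_i/z_j)-y,\qquad v_j=\Phi_1(z_j/z_i)+y.
\]
I let $B\sbs\cC_r$ be the $\cC_0$-subalgebra generated by $u_j$, $v_j$, all $\Phi_k$ with $k\ge2$, and all $\Phi_1$ not involving $z_i$. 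I filter $B$ by the total weight of these generators.

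Every generator of $\cC_r$ of weight $w$ is a sum of an element of $W_wB$ and, for the weight-one generators involving $z_i$, the term $\pm y$. Substituting these expressions into a monomial of weight $\le m$ and expanding shows
\[
W_m\cC_r=\sum_{n\ge0}(W_{m-n}B)\,y^n .
\]
No algebraic independence is needed: I only manipulate representatives. Since $T_i$ is an algebra automorphism fixing $B$ pointwise, it sends $b\,y^n$ to $b\,(y-1)^n$ for $b\in B$. Note that $W_{m-n}B=0$ for $n>m$, so every sum is finite.

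Next I construct a preimage of $b\,y^n$ with $b\in W_{m-n}B$. Let $g=-\frac{1}{n+1}b\,y^{n+1}$, which lies in $W_{m+1}\cC_r$. Expanding,
\[
(T_i-1)g=-\tfrac{1}{n+1}\,b\bigl((y-1)^{n+1}-y^{n+1}\bigr)
=b\,y^n+\sum_{j<n}c_j\,b\,y^j,\qquad c_j\in\bQ .
\]
Each error term $b\,y^j$ with $j<n$ lies in $(W_{m-n}B)\,y^j\sbs(W_{m-j}B)\,y^j$, hence in $W_m\cC_r$, and it has strictly smaller $y$-degree.

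Given $f=\sum_{n\le N}b_ny^n\in W_m\cC_r$ with $b_n\in W_{m-n}B$, I induct on $N$. I subtract $(T_i-1)$ applied to $-\frac{1}{N+1}b_Ny^{N+1}$. What remains is again of the form $\sum_{n<N}b'_ny^n$ with $b'_n\in W_{m-n}B$, so it lies in $W_m\cC_r$ with $y$-degree below $N$. The base case $N=0$ is covered by the same step with $n=0$. This gives surjectivity of $T_i-1:W_{m+1}\cC_r\to W_m\cC_r$. Surjectivity on all of $\cC_r$ follows because $\cC_r=\bigcup_mW_m\cC_r$.

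The only step needing care is the decomposition of $W_m\cC_r$ as $\sum_{n\ge0}(W_{m-n}B)\,y^n$. It requires checking that the substitution $\Phi_1(z_i/z_j)=u_j+y$, $\Phi_1(z_j/z_i)=v_j-y$ is weight-preserving term by term, which holds because all terms have weight $1$, and that the quasi-periodicity signs are as listed, so that $u_j$ and $v_j$ are genuinely $T_i$-invariant. Everything else is the elementary computation above.
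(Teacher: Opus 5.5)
Your proof is correct and follows essentially the same route as the paper. You use the same $T_i$-invariant subalgebra $B$, the same distinguished generator $\pm\Phi_1(z_i)$, and the same filtered decomposition $W_m\cC_r=\sum_n (W_{m-n}B)\,y^n$, which the paper uses implicitly when it writes $f=\sum_k b_k\binom{g}{k}$. The only difference is the last step: the paper uses the binomial basis in $g=-y$ to write down the preimage $h=\sum_k b_k\binom{g}{k+1}$ in closed form, whereas you invert $T_i-1$ by a triangular induction on the $y$-degree.
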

\begin{proof}
Let $B\subset\cC_r$ be the subalgebra generated by $\cE_r$,
the functions $\Phi_1(z_j)$ for $j\ne i$,
the functions $\Phi_1(z_i/z_j)-\Phi_1(z_i)$ for $j\ne i$,
and the functions $\Phi_1(z_j/z_k)$ for $j,k\ne i$ with $j\ne k$.
Then every element of $B$ is $T_i$-invariant.

Let $g=-\Phi_1(z_i)$.
Then $T_ig=g+1$ and $\cC_r=B[g]$.
Since the change of basis from $1,g,g^2,\dots$ to
$1,\binom g1,\binom g2,\dots$ is triangular and preserves the weight filtration, every $f\in W_m\cC_r$ can be written as
\[f=\sum_{k\ge0}b_k\binom{g}{k},\qquad
b_k\in B\cap W_{m-k}\cC_r.
\]
Then
\[h=\sum_{k\ge0}b_k\binom{g}{k+1}\in W_{m+1}\cC_r.
\]
Using $\binom{x+1}{k+1}-\binom{x}{k+1}=\binom{x}{k}$,
we obtain
\[(T_i-1)h
=\sum_{k\ge0}b_k\rbr{\binom{g+1}{k+1}-\binom{g}{k+1}}=f.
\qedhere
\]
\end{proof}

The poles of the functions in $\cC_r$ occur only on the divisors
$z_j/z_i=q^n$ for $0\le i\ne j\le r$.
Therefore
\[\cC_r\sbs\cO(U_r)\cap\cM(X_r)\sbs\cM(U_r).\]
The function $\Phi_1(z,q)$ has an expansion
\begin{equation}
\Phi_1(z,q)
=\sum_{n>0}\frac{q^nz^{-n}-z^n}{1-q^n}
\in\bQ(z)\pser q.
\end{equation}
The same applies to $\Phi_k(z,q)$ for $k\ge2$.
Since $\cC_r$ is generated by $\Phi_k(z_j/z_i)$,
we obtain $\cC_r\sbs\bQ(z_1,\dots,z_r)\pser q$.
There is an expansion map
(corresponding to $1>\n {z_1}>\dots>\n{z_r}>\n q>0$)
\begin{equation}
\io_r:\cC_r\emb\cL_r:=\bQ\lser{z_1}\dots\lser{z_r}\pser q.
\end{equation}

\subsection{Constant term theorem}
Define the constant term of $f\in\cC_r$ to be
\begin{equation}
\CT_r(f)=[z_1^0\dots z_r^0]\io_r(f).
\end{equation}
The goal of this section is to prove the following result.

\begin{theorem}[Constant term theorem]
\label{CTT}
If $F\in W_{m}\cC_r$, then
\[\CT_r(F)\in W_{m}\cC_0.\]
In particular, if $F\in\cC_r$, then $\CT_r(F)\in\cC_0$.
\end{theorem}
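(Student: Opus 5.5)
We argue by induction on $r$, taking the constant term in one variable at a time. For $r=0$ there is nothing to prove. For $r\ge1$ we first extract $[z_r^0]$ from the innermost expansion. The plan is to show that this partial constant term sends $W_m\cC_r(z_0,\dots,z_r)$ into $W_m\cC_{r-1}(z_0,\dots,z_{r-1})$, compatibly with the expansions $\io_r$ and $\io_{r-1}$. Iterating this then gives $\CT_r(F)\in W_m\cC_0$.

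To compute $[z_r^0]$ we use the surjectivity of \cref{T is surj}. Given $F\in W_m\cC_r$, choose $h\in W_{m+1}\cC_r$ with $(T_r-1)h=F$. Work in the expansion region $|z_{r-1}|>|z_r|>|q|$, viewing $F$ as a meromorphic function of $z_r$ with the other variables fixed. Then $[z_r^0]\io_r(F)$ equals the contour integral $\frac1{2\pi i}\oint_{|z_r|=\rho}F\,\frac{dz_r}{z_r}$ over a circle with $|q|<\rho<|z_{r-1}|$ that avoids the poles. Substituting $F=T_rh-h$ turns this integral into the difference of the integrals of $h$ over $|z_r|=|q|\rho$ and over $|z_r|=\rho$. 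By the residue theorem, that difference is minus the sum of the residues of $h\,dz_r/z_r$ in the annulus $|q|\rho<|z_r|<\rho$.

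Inside the fundamental annulus, poles occur only at $z_r=0$ (which lies outside the annulus), at $z_r=q z_i$ coming from $\Phi_k(z_i/z_r)$ with the factor $q$ absorbed, and at $z_r=z_i\cdot q^{n}$ for the appropriate $n$. The choice of $\rho$ places exactly one representative of each family $z_r\in z_iq^{\bZ}$ (with $z_0=1$) in the annulus. So
\[
\CT_{z_r}(F)=-\sum_{i=0}^{r-1}\operatorname{Res}_{z_r=z_iq^{n_i}}\frac{h\,dz_r}{z_r}.
\]
A residue is the $s^{-1}$-coefficient after substituting $z_r=z_iq^{n_i}e^s$.

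The main obstacle is controlling these residues. One has to show that, after this substitution, each generator $\Phi_k(z_j/z_r)$ or $\Phi_k(z_r/z_j)$ with $j\ne i$ expands as a power series in $s$. Its coefficients are $D$-derivatives $\Phi_{k+l}(z_j/z_i)/l!$, up to signs and the constant shifts from $\Phi_1(qz)=\Phi_1(z)-1$, and these have weight $k+l$. The generators $\Phi_k(z_i/z_r)$, by contrast, become $\Phi_k(e^{-s})$ up to shifts, which lies in $\cC_0\lser s$ with $[s^l]$ of weight $\le k+l$ by \cref{weights of Phi_k}. Under the filtration on $\cC_{r-1}\lser s$ in which $s$ has weight $-1$, every generator of weight $k$ then maps into weight $\le k$. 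Hence $h\in W_{m+1}$ maps into $W_{m+1}(\cC_{r-1}\lser s)$, and its $s^{-1}$-coefficient lies in $W_m\cC_{r-1}$, exactly offsetting the weight gained by inverting $T_r-1$. The bookkeeping to check is that the shifts ($\Phi_1\mapsto\Phi_1\pm1$, the factor $1/z_r$, and $dz_r/z_r=ds$) all preserve weights, and that the formal identity agrees with the expansion $\io_r$. For the latter, the contour argument is legitimate on $U_r$ because $\cC_r\sbs\cO(U_r)\cap\cM(X_r)$ and all Laurent expansions converge there.
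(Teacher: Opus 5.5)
Your proposal is correct and follows essentially the same route as the paper. You invert $T_r-1$ with a one-step weight increase via \cref{T is surj}, then compare the integrals over $|z_r|=\rho$ and $|z_r|=|q|\rho$ to write $[z_r^0]F$ as a sum of residues at $z_r=qz_i$ (so $n_i=1$ in your notation). The substitution $z_r=qz_ie^s$ shows each residue lies in $W_m\cC_{r-1}$, and you iterate using the compatibility $\io_{r-1}([z_r^0]F)=[z_r^0]\io_r(F)$.
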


A closely related result appears in \cite[Theorem 5.8]{goujard_counting}.

Recall that
\[U_r=\sets{(z_1,\dots,z_r,q)\in \bC^{r+1}}
{1>\n{z_1}>\cdots>\n{z_r}>\n q>0}.\]
We will introduce the coefficient map $[z_r^0]:\cO(U_r)\to\cO(U_{r-1})$
and show that $\cC_r$ is mapped to~$\cC_{r-1}$.
Applying this operation $r$ times, we will obtain
$\CT_r(f)\in\cC_0$.
\medskip

In what follows we will write $x=z_r$ and $z_0=1$.
The fiber of the projection $U_r\to U_{r-1}$
over $u=(z_1,\dots,z_{r-1},q)\in U_{r-1}$ is the annulus
\[
A_u=\sets{x\in\bC}{\n q<\n x<\n{z_{r-1}}}.
\]
For every $f\in\cO(U_r)$, the restriction $f(u,-)$
is holomorphic on $A_u$, and therefore has a unique Laurent expansion
\[
f(u,x)=\sum_{n\in\bZ}a_n(u)x^n,\qquad x\in A_u.
\]
The coefficients $a_n(u)$ are holomorphic on $U_{r-1}$.
Thus one has well-defined linear maps
\[
[x^n]:\cO(U_r)\longrightarrow\cO(U_{r-1}),\qquad
[x^n]f:=a_n.
\]

Let $a:X_{r-1}\to\bC$ be a holomorphic map such that its graph
\[D_a=\sets{(u,a(u))}{u\in X_{r-1}}\]
is contained in $X_r$.
For \(f\in\cM(X_r)\), write locally near \(D_a\)
\[
\frac{f(u,x)}{x}
=
\sum_{n\gg-\infty} f_n(u)\bigl(x-a(u)\bigr)^n.
\]
Then define
\[
\Res_{x=a} f\,\frac{dx}{x}:=f_{-1}\in\cM(X_{r-1}).
\]

\begin{lemma}\label{residue ident}
For $f\in\cC_r$ we have
an equality in $\cM(U_{r-1})$
\[
[x^0](1-T_r)f
=\sum_{i=0}^{r-1}\Res_{x=qz_i}f\,\frac{dx}{x}.
\]
\end{lemma}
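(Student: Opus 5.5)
The plan is to compute $[x^0]f$ and $[x^0]T_rf$ as contour integrals and compare them via the residue theorem. Fix $u=(z_1,\dots,z_{r-1},q)\in U_{r-1}$ and write $x=z_r$. For $f\in\cC_r\sbs\cO(U_r)\cap\cM(X_r)$, the coefficient $[x^0]f$ is
\[
[x^0]f=\frac1{2\pi i}\oint_{\n x=\rho}f(u,x)\,\frac{dx}{x},
\]
for any $\rho$ with $\n q<\rho<\n{z_{r-1}}$. This uses the Laurent expansion on the annulus $A_u$ described above.

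For $T_rf(u,x)=f(u,qx)$, substitute $y=qx$; the measure $dx/x=dy/y$ is invariant. This gives
\[
[x^0]T_rf=\frac1{2\pi i}\oint_{\n y=\n q\rho}f(u,y)\,\frac{dy}{y}.
\]
Here one must note that $T_rf$ is holomorphic on $U_r$, because $T_r$ preserves $\cC_r$. So $[x^0]T_rf$ is defined, and the formula holds because $f(u,-)$ is meromorphic on $\bC^\xx$ as an element of $\cM(X_r)$.

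The difference $[x^0](1-T_r)f$ is therefore the integral of $f\,dy/y$ over the boundary of the annulus $\n q\rho<\n y<\rho$. By the residue theorem it equals the sum of the residues of $f\,dy/y$ at the poles in that annulus.

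Next I locate the poles. Functions in $\cC_r$ have poles only where $z_j/z_i=q^n$. Those involving $x=z_r$ are at $x=q^nz_i$ for $0\le i\le r-1$ (with $z_0=1$, from $\Phi_k(x/z_i)$ and $\Phi_k(z_i/x)$). The point $x=0$ is not in the annulus, so $1/x$ contributes nothing extra. Choose $\rho$ close to $\n{z_{r-1}}$, i.e. $\n{z_r}$ just below $\n{z_{r-1}}$, with $\rho>\n q$. The condition to check is
\[
\n q\rho<\n q\,\n{z_i}<\rho\quad\text{for all } i\le r-1,
\]
which requires $\rho<\n{z_i}<\rho/\n q$. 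The left inequality holds since $\n{z_i}\ge\n{z_{r-1}}>\rho$. The right one, $\n{z_i}<\rho/\n q$, holds if $\rho>\n q$, since $\n{z_i}\le 1<\rho/\n q$.

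So the points $qz_i$ lie in the annulus. The other candidates $q^nz_i$ with $n\ne1$ do not, because $\n{q^nz_i}\ge\n{z_i}>\rho$ for $n\le0$, and $\n{q^nz_i}\le\n q^2<\n q\rho$ for $n\ge2$ (using $\rho>\n q$). Hence the sum of residues is exactly $\sum_i\Res_{x=qz_i}f\,dx/x$, evaluated pointwise on $U_{r-1}$. This is where I expect the main care to be needed: the bookkeeping of which poles lie in which annulus, uniformly in $u$.

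Finally, to get an identity in $\cM(U_{r-1})$, check that the residue at the moving pole $x=a(u)=qz_i$, defined algebraically via the Laurent expansion in $x-a(u)$, agrees pointwise with the analytic residue. Some $qz_i$ may coincide for special $u$; on the dense open set where they are distinct the identity holds pointwise. Both sides are meromorphic, so it extends to all of $U_{r-1}$. The left side is even holomorphic there.
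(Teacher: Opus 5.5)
Your proposal is correct and follows the paper's proof essentially step for step. You write $[x^0]f$ and $[x^0]T_rf$ as contour integrals of $f\,dx/x$ over $|x|=\rho$ and $|x|=|q|\rho$, then apply the residue theorem after checking that only the poles $x=qz_i$, $0\le i\le r-1$, lie in the annulus between them. Your worry that some $qz_i$ might coincide is unnecessary, since the $|z_i|$ are pairwise distinct on $U_{r-1}$, but the density argument you add is harmless.
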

\begin{proof}
Let $u=(z_1,\dots,z_{r-1},q)\in U_{r-1}$.
Choose $\n q<R<\n{z_{r-1}}$, so that
\[\setc x{\n x=R}\sbs A_u.\]
Since $f\in\cC_r\subset\cO(U_r)$,
the function $f(u,-)$ is holomorphic on $A_u$,
hence
\[
\frac{1}{2\pi i}\int_{\n x=R}f(u,x)\frac{dx}{x}
=[x^0]f(u,-).
\]
Likewise, since $T_rf\in\cC_r$,
\[
\frac{1}{2\pi i}\int_{|y|=R}(T_rf)(u,y)\frac{dy}{y}
=
[x^0]T_rf(u,-).
\]
After the change of variables \(x=qy\), we obtain
\[
\frac{1}{2\pi i}\int_{|y|=R}(T_rf)(u,y)\frac{dy}{y}
=
\frac{1}{2\pi i}\int_{|x|=|q|R}f(u,x)\frac{dx}{x}.
\]
Therefore
\[
[x^0](1-T_r)f(u)
=\frac{1}{2\pi i}\int_{|x|=R}f(u,x)\frac{dx}{x}
-\frac{1}{2\pi i}\int_{|x|=|q|R}f(u,x)\frac{dx}{x}.
\]
As a meromorphic function of $x$,
the only possible poles of $f(u,x)$ are $x=q^nz_i$ for $0\le i<r$.
If $x=q^nz_i$ is in the annulus $|q|R\le |x|\le R$, then
$\n q^2<\n q^n\n {z_i}<\n {z_i}$, hence $n=1$.
The points $x=qz_i$ lie in the interior of the annulus.
Now we apply the residue theorem.
\end{proof}

\begin{proposition}[Residue closure]
\label{resid closure}
Let $f\in W_m\cC_r$ and let $\set{x=a}$ be a pole divisor of $f$.
Then
\[
\Res_{x=a}f\,\frac{dx}{x}\in W_{m-1}\cC_{r-1}.
\]
\end{proposition}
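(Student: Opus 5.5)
The plan is to compute the residue by a local change of variables that turns it into a coefficient extraction in a formal Laurent series. The pole divisors of $f$ in $x=z_r$ are among $x=q^nz_i$ with $0\le i\le r-1$ and $n\in\bZ$, where $z_0=1$. So fix $a=q^nz_i$ and substitute
\[
x=q^nz_ie^s,\qquad \frac{dx}{x}=ds .
\]
Then $\Res_{x=a}f\,\frac{dx}{x}$ is the coefficient $[s^{-1}]$ of the Laurent expansion of $f(u,q^nz_ie^s)$ in $s$. This is because $x\mapsto s$ is a local biholomorphism near $s=0$ with $dx/x=ds$. I will equip $\cC_{r-1}\lser s$ with the filtration of the Remark after \cref{Phi coeff}, in which $s$ has weight $-1$. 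It then suffices to show that the substitution sends $W_m\cC_r$ into $W_m(\cC_{r-1}\lser s)$. Indeed, taking $[s^{-1}]$ then lands in $W_{m-1}\cC_{r-1}$ by the definition of that filtration.

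Since $\cC_r$ is generated over $\cC_0$ by the $\Phi_k(z_j/z_l)$, and the substitution is an algebra map, it is enough to check the generators. The filtration on $\cC_{r-1}\lser s$ is multiplicative, and by \cref{Phi(1/z)} it suffices to treat $\Phi_k(x/z_j)$ for $0\le j\le r-1$; the generators not involving $x$ are unchanged. I will use repeatedly that $\Phi_k(qw)=\Phi_k(w)-\de_{k1}$, which follows from \cref{Phi(1/z)}.

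\emph{Case $j=i$ (the singular generator).} Here
\[
\Phi_k(q^ne^s)=\Phi_k(e^s)-n\de_{k1}.
\]
By \cref{Phi coeff} and \cref{weights of Phi_k}, this lies in $\cC_0\lser s$ and has weight $\le k$.

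\emph{Case $j\ne i$ (regular generators).} The function $\Phi_k(q^n(z_i/z_j)e^s)$ is holomorphic at $s=0$ for generic $u$. Its Taylor coefficients are
\[
[s^l]=\frac1{l!}D^l\Phi_k\big(q^nz_i/z_j\big)=\frac1{l!}\Phi_{k+l}(z_i/z_j)-n\de_{k1}\de_{l0}.
\]
This lies in $W_{k+l}\cC_{r-1}$, so the whole series has weight $\le k$ in the $s$-filtration. The same argument covers $z_i=z_0=1$ or $z_j=z_0=1$, giving $\Phi_{k+l}(q^nz_i)$-type terms handled by the same shift identity. Multiplying out, the image of any monomial of weight $\le m$ has weight $\le m$, which proves the claim.

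The step I expect to need the most care is justifying this formal computation as an identity of meromorphic functions on $X_{r-1}$. The Taylor expansion of the regular factors is only valid off the other divisors, for instance where $z_i/z_j\in q^\bZ$ is excluded. One must check that $[s^{-1}]$ of the product of convergent Laurent/Taylor series equals $f_{-1}$ as defined via $(x-a)^n$-expansions. Both expansions compute the same contour integral around $x=a$ on a dense open set, so they agree as meromorphic functions. Independence of the chosen polynomial representation of $f$ is automatic, since the residue is intrinsic to $f$.
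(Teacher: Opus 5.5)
Your proposal is correct and follows the paper's own proof essentially step for step. The shared steps are the substitution $x=q^nz_ie^s$ that turns the residue into $[s^{-1}]$, and the reduction to $f(ae^s)\in W_m(\cC_{r-1}\lser s)$ by checking the generators $\Phi_k(x/z_j)$. The two cases are the same as well: $j\neq i$ via the shift identity and the Taylor coefficients $\Phi_{k+l}$, and $j=i$ via \cref{weights of Phi_k}. Your closing paragraph, identifying the formal coefficient $[s^{-1}]$ with the analytic residue on a dense open set, is a justification the paper leaves implicit.
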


\begin{proof}
Let $a=q^nz_j$ for $n\in\bZ$, $0\le j<r$ and let $x=ae^s$, where $s$ is a local coordinate around the divisor.
Then
\[\Res_{x=a}f(x)\frac{dx}x=\Res_{s=0}f(ae^s)ds=[s\i]f(ae^s).\]
It suffices to show that
$[s^\ell]f(ae^s)\in W_{m+\ell}\cC_{r-1}$ for all $\ell\in\bZ$
or, equivalently, $f(ae^s)\in W_m(\cC_{r-1}\lser s)$.
It is enough to check this for the generators of $\cC_r$,
by the multiplicativity of the filtration.
Generators which do not involve $x$ are independent of $s$.

By \cref{Phi(1/z)}, among generators involving $x$,
it is enough to consider generators
of the form $\Phi_k(x/z_i)$.
If $i\ne j$, then
\[
\Phi_k(q^ne^sz_j/z_i)
=\Phi_k(e^sz_j/z_i)-n\delta_{k,1}
=\sum_{\ell\ge0}
\frac{\Phi_{k+\ell}(z_j/z_i)}{\ell!}s^\ell -n\delta_{k,1}.
\]
Hence the coefficient of $s^\ell$ belongs to $W_{k+\ell}\cC_{r-1}$.
If $i=j$, then
$\Phi_k(q^ne^s)=\Phi_k(e^s)-n\delta_{k,1}$.
The coefficient of $s^\ell$ belongs to
$W_{k+\ell}\cC_{r-1}$ by \cref{weights of Phi_k}.
\end{proof}

\begin{proof}[Proof of \cref{CTT}]
Let $f\in W_{m}\cC_r$.
By \cref{T is surj}, there exists $g\in W_{m+1}\cC_r$
such that $(1-T_r)g=f$.
By \cref{residue ident},
\[
[z_r^0]f
=
\sum_{i=0}^{r-1}
\Res_{x=qz_i}g\,\frac{dx}{x}.
\]
By \cref{resid closure}, every term on the right belongs to
$W_{m}\cC_{r-1}$, hence $[z_r^0]f\in W_{m}\cC_{r-1}$.
The ordered expansions are compatible with taking the last constant
coefficient:
\[
\io_{r-1}\rbig{[z_r^0]f}
=
[z_r^0]\io_r(f).
\]
Iterating this argument for $z_{r-1},\dots,z_1$, we obtain
$\CT_r(f)=[z_1^0\dots z_r^0]\io_r(f)\in W_{m}\cC_0$.
\end{proof}

\section{Quasi-modularity of \tpdf{$q$}{q}-traces}
In this section we work, for simplicity, with ordinary vector spaces.
The superspace version needed for the Hilbert scheme application is
recorded in \cref{rem:super qm}.

\subsection{Cumulants}
Let $V\in\vect_K^\bZ$ be a bounded below vector space and let $A=\iend(V)$.
In \cref{norm q-trace} we constructed the normalized $q$-trace $\ta:A\to K\lser q$,
which intuitively may be viewed as the expectation on the algebra of observables $A$.

More generally, let~$A$ be a polarized algebra
and let
\[\ta:A\to B\]
be a linear map to a commutative algebra $B$ such that $\ta(1)=1$.
Given a tuple $a=(a_1,\dots,a_r)\in A^r$, consider the series
\begin{equation}
Z_a(t)=Z_a(t_1,\dots,t_r)
=\tau\rbr{\prod*_{i=1}^r\nop {e^{t_ia_i}}}
\in B\pser{t_1,\dots,t_r}
\end{equation}
and define the cumulants
\begin{gather}
\kp_m(a)=\dd_{t_1}^{m_1}\dots \dd_{t_r}^{m_r}\log Z_a(t)\big|_{t=0}\in B,\qquad m\in\bN^r,\\
\log Z_a(t)=\sum_{m\in\bN^r}\frac{\kp_m(a)}{\prod_i m_i!}t^{m}.
\end{gather}

\begin{example}
For $a,b\in A$, we have
\[\kp_1(a)=\ta(a),\qquad \kp_{1,1}(a,b)=\ta(ab)-\ta(a)\ta(b),\qquad
\kp_2(a)=\ta(\nop{a^2})-\ta(a)^2.
\]
\end{example}

\begin{definition}
A subspace $U\sbs A$ is called \idef{Gaussian} if,
for all $a\in U^r$ and $m\in\bN^r$ with $\n m=\sum*_i m_i\ge3$,
we have $\kp_m(a)=0$.
This means that $\log Z_a(t)$ is a polynomial of degree $\le 2$ for all $a\in U^r$.
\end{definition}

\subsection{Cumulants of linear Heisenberg fields}
Let $H$ be a quadratic space over a field $K$.
Let $\hh$ be the corresponding Heisenberg algebra and $V=V(H)$ be the Fock space.
For every $n\in\bZ$, consider the linear map
$\rho_n:Ht^n\to\End(V)$ (see \cref{rho}).

\begin{definition}
A \idef{linear Heisenberg field (of degree $0$}) is a field
\[
X(z)=\sum*_{n\in\bZ}x(n)z^{-n}\in\Fo(V)\iso\prod*_{n\in\bZ}\iend_n(V)
\]
such that $x(n)\in \Im(\rho_n)$ for all $n$.
Let $\Ho(V)\sbs\Fo(V)$ be the space of all such fields.
\end{definition}

Given linear Heisenberg fields $X_1,\dots,X_r\in\Ho(V)$,
by \cref{exp field} we can construct the vertex operator
\[\nop{e^{X_1(z_1)}}\dots \nop{e^{X_r(z_r)}}
\in \hom_0(V,V\lser {z_1}\dots \lser{z_r}).
\]
Since $V$ is bounded below and has finite-dimensional graded components, we may consider the normalized $q$-trace \cref{norm q-trace}
\[
\ta\rbr{\prod_{i=1}^r\nop{e^{X_i(z_i)}}}
\in K\lser {z_1}\dots \lser{z_r}\pser q.
\]

\begin{theorem}\label{th-gauss2}
Let $X_i(z)=\sum_{n\in\bZ} x_i(n)z^{-n}$, $i\in[r]$,
be linear Heisenberg fields.
Then
\begin{gather}
\log\ta\rbr{\prod_{i=1}^r\nop{e^{t_iX_i(z_i)}}}
=\sum_{n>0}\sum_{i<j}t_it_jc_{ij}^{(n)}
+\sum_{n>0}\sum_{i,j}\frac{q^n}{1-q^{n}}t_it_jc_{ij}^{(n)},\\
\pr cn_{ij}=[x_i(n),x_j(-n)]\cdot (z_j/z_i)^n,\qquad
i,j\in[r],\,n>0. \label{cn}
\end{gather}
In particular,
the space $\Ho(V)$ of linear Heisenberg fields is Gaussian.
\end{theorem}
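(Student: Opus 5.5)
We decompose each linear Heisenberg field into modes. Write $X_i(z_i)=X_i^+ + X_i^-$, where $X_i^+=\sum_{n>0}x_i(-n)z_i^{n}$ is the creation part and $X_i^-=\sum_{n>0}x_i(n)z_i^{-n}$ is the annihilation part. The zero mode vanishes, since $x_i(0)\in\Im(\rho_0)=0$ by \cref{rho}. Then
\[\prod_i\nop{e^{t_iX_i(z_i)}}=\prod_i e^{t_iX_i^+}e^{t_iX_i^-}.\]
Every commutator $[x_i(m),x_j(-m')]$ is a scalar, by the Heisenberg relations, and it vanishes unless $m=m'$. So we may use the Baker--Campbell--Hausdorff identity $e^{A}e^{B}=e^{B}e^{A}e^{[A,B]}$ for scalar $[A,B]$. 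Moving all annihilation exponentials $e^{t_iX_i^-}$ to the right past the creation exponentials $e^{t_jX_j^+}$ with $j>i$, we obtain
\[\prod_i\nop{e^{t_iX_i(z_i)}}=\exp\Bigl(\sum_{i<j}t_it_j[X_i^-,X_j^+]\Bigr)\, e^{\sum_i t_iX_i^+}\,e^{\sum_i t_iX_i^-}.\]
The scalar prefactor is
\[[X_i^-,X_j^+]=\sum_{n>0}[x_i(n),x_j(-n)](z_j/z_i)^n=\sum_{n>0}c^{(n)}_{ij}.\]
This produces the first sum in the claimed formula. The series converges in the ordered expansion ring, since for $i<j$ it is a series in $z_j/z_i$.

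It remains to compute $\ta(e^{a}e^{b})$ with $a=\sum_it_iX_i^+$ and $b=\sum_it_iX_i^-$. Split by mode: $a=\sum_{n>0}a_n$ and $b=\sum_{n>0}b_n$, where $a_n=\sum_it_ix_i(-n)z_i^n\in A_n$ and $b_n=\sum_jt_jx_j(n)z_j^{-n}\in A_{-n}$. Modes with different $|n|$ commute, and $[b_n,a_n]=\sum_{i,j}t_it_jc^{(n)}_{ij}$ is a scalar. Truncating to $n\le N$, we can reorder $e^ae^b=\prod_{n\le N}e^{a_n}e^{b_n}$. We then apply \cref{ta-exp} iteratively, with $k=n$, $s=t=1$ formally (or with auxiliary parameters), taking $d$ to be the product of the remaining factors. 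This $d$ commutes with $a_n,b_n$. The result is
\[\ta(e^ae^b)=\prod_{n>0}\exp\Bigl(\frac{q^n}{1-q^n}\sum_{i,j}t_it_jc^{(n)}_{ij}\Bigr),\]
and passing to $N\to\infty$ coefficientwise in $q$ (as in \cref{ta(f)}) gives the limit. One point needs care: \cref{ta-exp} is stated with $s,t$ formal and $a,b$ fixed. Here $a_n,b_n$ are linear combinations with coefficients in $K[t]\lser z$, so we should extend scalars to this ring. The proof of \cref{ta-exp} is purely algebraic and goes through verbatim.

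Taking logarithms yields the stated formula. The right-hand side is a quadratic polynomial in the $t_i$ with coefficients in the expansion ring. Hence all cumulants $\kp_m$ with $\n m\ge3$ vanish, and $\Ho(V)$ is Gaussian.

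The main obstacle is making the formal manipulations rigorous in the ring $K\lser{z_1}\dots\lser{z_r}\pser q$. This concerns the convergence of the infinite products over $n$ and the legitimacy of BCH reordering with infinitely many modes. I would handle it by truncation: at each fixed power of $q$ and fixed degrees in the $z_i$, only finitely many modes contribute when acting on a graded piece $V_m$, because $V$ is bounded below with finite-dimensional components.
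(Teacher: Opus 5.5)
Your proposal is correct and follows essentially the same route as the paper. Both use BCH reordering to extract the scalar prefactor $\exp\bigl(\sum_{i<j}t_it_jc_{ij}\bigr)$, factor $e^{\sum_i t_iX_i^+}e^{\sum_i t_iX_i^-}$ into commuting mode blocks $e^{a_n}e^{b_n}$, and apply \cref{ta-exp} successively. The only differences are cosmetic: the paper truncates all fields to modes $|n|\le N$ at the outset so that every manipulation is finite, whereas you truncate at the trace step and justify the limit via the grading, and your remark on extending scalars in \cref{ta-exp} spells out what the paper leaves implicit.
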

\begin{proof}
For $N\ge1$, truncate each $X_i$ to the modes $\n n\le N$.
The computation below proves the asserted identity for the truncated fields.
Passing to the limit $N\to\infty$ coefficientwise in the formal variables gives the general case.
Let
\[c_{ij}=[X_i(z_i)_-,X_j(z_j)_+]
=\sum_{m,n>0}[x_i(m)z_i^{-m},x_j(-n)z_j^n]
=\sum_{n>0}c_{ij}^{(n)}.\]
Then
\[e^{t_iX_i(z_i)_-}e^{t_jX_j(z_j)_+}
=e^{t_it_jc_{ij}}\cdot e^{t_jX_j(z_j)_+}e^{t_iX_i(z_i)_-}.
\]
Therefore
\def\aa#1#2{X_{#1,#2}}
\[\prod_{i=1}^r \nop{e^{t_iX_i(z_i)}}
=\exp\rbigg{\sum_{i<j}t_it_jc_{ij}}
e^{\sum_it_iX_i(z_i)_+}e^{\sum_i t_i X_i(z_i)_-}
=\exp\rbr{\sum_{i<j}t_it_jc_{ij}}\prod_{n>0}e^{x_n}e^{y_n},
\]
where $x_n=\sum_{i=1}^r t_ix_{i}(-n)z_i^n$ and $y_n=\sum_{i=1}^rt_ix_{i}(n)z_i^{-n}$.
For $m\ne n$, the elements $x_m,y_m$ commute with $x_n,y_n$,
while $[y_n,x_n]=\sum_{i,j}t_it_jc_{ij}^{(n)}$.
Applying \cref{ta-exp} successively to the factors $e^{x_n}e^{y_n}$,
we obtain
\[\ta\rbr{\prod_{n>0}e^{x_n}e^{y_n}}
=\prod_{n>0}\exp\rbr{\frac{q^n\sum_{i,j}t_it_j c_{ij}^{(n)}}{1-q^n}}
=\exp\rbr{\sum_{n>0}\sum_{i,j}\frac{q^n}{1-q^n}t_it_jc_{ij}^{(n)}}.
\]
Therefore
\[\log \ta\rbr{\prod_{i=1}^r\nop{e^{t_iX_i(z_i)}}}
=\sum_{i<j}t_it_jc_{ij}
+\sum_{n>0}\sum_{i,j}\frac{q^n}{1-q^n}t_it_jc_{ij}^{(n)}.
\qedhere
\]
\end{proof}

\begin{corollary}\label{cumul-cor}
Let $X_1,\dots,X_r$ be linear Heisenberg fields.
Then
\[
\log\ta\rbr{\prod_{i=1}^r\nop{e^{t_iX_i(z_i)}}}
=\sum_{i<j}\kp_{11}(X_i,X_j)t_it_j
+\oh\sum_{i}\kp_2(X_i)t_i^2,
\]
where (see \cref{cn})
\begin{gather}
\kp_{11}(X_1,X_2)
=\sum_{n>0}\frac{\pr cn_{12}+q^n\pr cn_{21}}{1-q^n}
,\qquad
\kp_2(X_1)
=2\sum_{n>0}\frac{q^n\pr cn_{11}}{1-q^n}
\end{gather}
\end{corollary}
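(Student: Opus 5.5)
The corollary is a rewriting of \cref{th-gauss2}. The plan is to take the explicit formula for $\log\ta\rbr{\prod_i\nop{e^{t_iX_i(z_i)}}}$ from that theorem and sort it by monomials in the $t$'s. By the definition of cumulants, $\log Z_a(t)=\sum_m\kp_m(a)t^m/\prod_i m_i!$. The coefficient of $t_it_j$ for $i<j$ is therefore $\kp_{11}(X_i,X_j)$, and the coefficient of $t_i^2$ is $\oh\kp_2(X_i)$. All higher cumulants vanish because the expression is quadratic in $t$; this is the Gaussianity already recorded in \cref{th-gauss2}.

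The steps are as follows.

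\emph{Off-diagonal terms.} Fix $i<j$ and collect the coefficient of $t_it_j$. The first sum in \cref{th-gauss2} contributes $\sum_{n>0}c^{(n)}_{ij}$. The second sum contributes $\sum_{n>0}\frac{q^n}{1-q^n}\rbr{c^{(n)}_{ij}+c^{(n)}_{ji}}$, since both ordered pairs $(i,j)$ and $(j,i)$ occur. Using
\[
1+\frac{q^n}{1-q^n}=\frac1{1-q^n},
\]
the total is $\sum_{n>0}\frac{c^{(n)}_{ij}+q^nc^{(n)}_{ji}}{1-q^n}$. This is the claimed $\kp_{11}(X_i,X_j)$, read with indices $(1,2)$ replaced by $(i,j)$.

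\emph{Diagonal terms.} The $t_i^2$ terms come only from $i=j$ in the second sum, giving $\sum_{n>0}\frac{q^n}{1-q^n}c^{(n)}_{ii}$. Equating this with $\oh\kp_2(X_i)$ gives $\kp_2(X_i)=2\sum_{n>0}\frac{q^nc^{(n)}_{ii}}{1-q^n}$.

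\emph{Consistency check.} To confirm these coefficients really are the cumulants as defined, note that $\kp_{11}(X_1,X_2)$ and $\kp_2(X_1)$ depend only on the pair, or single field, involved. One can specialise to $r=2$ or $r=1$, where $Z_a$ is exactly the expression in \cref{th-gauss2} with those fields.

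There is no real obstacle. The only point needing care is the bookkeeping of the ordered pairs $(i,j)$ versus $(j,i)$ in the second sum, which is what produces the asymmetric numerator $c^{(n)}_{12}+q^nc^{(n)}_{21}$. Convergence is not an issue: all identities hold coefficientwise in $K\lser{z_1}\dots\lser{z_r}\pser q$.
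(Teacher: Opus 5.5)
Your proposal is correct and matches the paper's argument: both read off the $t_it_j$ and $t_i^2$ coefficients of the quadratic expression in \cref{th-gauss2}, and both combine the off-diagonal contributions using $1+\frac{q^n}{1-q^n}=\frac1{1-q^n}$. The paper does the computation directly for $Z(t)=\ta\bigl(\nop{e^{t_1X_1(z_1)}}\nop{e^{t_2X_2(z_2)}}\bigr)$, which is exactly your specialisation step identifying these coefficients with the pair and single-field cumulants.
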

\begin{proof}
For $Z(t)=\ta\rbr{\nop{e^{t_1X_1(z_1)}} \nop{e^{t_2X_2(z_2)}}}$,
we have
\[\kp_{11}(X_1,X_2)=[t_1t_2]\log Z(t),\qquad
\kp_{2}(X_1)=2[t_1^2]\log Z(t).
\]
Therefore
$\kp_{11}(X_1,X_2)
=\sum_{n>0}\rbr{\pr cn_{12}+\frac{q^n}{1-q^n}\rbr{\pr cn_{12}+\pr cn_{21}}}=\sum_{n>0}\frac{\pr cn_{12}+q^n\pr cn_{21}}{1-q^n}$.
\end{proof}

\subsection{Cumulants of bosonic descendants}
Let $H$ be a quadratic space, $\hh$ be the Heisenberg algebra
and $V=V(H)$ be the Fock space.
For $a\in H$, consider the free boson field \cref{free bos}
\begin{equation}
\BF_a(z)=\sum_{n\ne0}\frac{a_{-n}}nz^n
\end{equation}
and its descendants $\BD k_a(z)=D^k\BF_a(z)$, where $D=z\dd_z$.

\begin{lemma}\label{cum des}
We have
\begin{gather}
\kp_{11}(\BD k_a(z_1),\BD l_b(z_2))
=(-1)^k(a,b)\Phi_{k+l}(z_2/z_1,q),\qquad k,l\ge0,\\
\kp_2(\BD k_a(z))=2(-1)^{k-1}(a,a)F_{2k}(q),\qquad k\ge0,
\end{gather}
where $\Phi_k$ and $F_k$ are functions defined in \cref{Phi_k,F_k}.
\end{lemma}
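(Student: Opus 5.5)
The plan is to specialize \cref{cumul-cor} to bosonic descendants. Those are linear Heisenberg fields, so it suffices to compute the commutators $\pr cn_{ij}$ of \cref{cn} and then compare the resulting series with the expansions of $\Phi_{k+l}$ and $F_{2k}$ from \cref{Phi_k,F_k}.

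\emph{Step 1: write the descendants in the form $\sum_n x(n)z^{-n}$.} By \cref{descend}, $\BD k_a(z)=\sum_{n\ne0}a_{-n}n^{k-1}z^n$. Substituting $n\mapsto -n$ gives
\[
x_1(n)=(-n)^{k-1}a_n .
\]
Each $x_1(n)$ lies in $\Im(\rho_n)$, so $\BD k_a\in\Ho(V)$. Similarly $\BD l_b$ has modes $x_2(n)=(-n)^{l-1}b_n$. This holds for every $k\ge0$, including $k=0$, where $n^{-1}$ appears.

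\emph{Step 2: compute the commutators.} For $n>0$, the bracket \cref{H-bracket} gives $[a_n,b_{-n}]=n(a,b)$. Hence
\[
\pr cn_{12}=(-1)^{k-1}(a,b)\,n^{k+l-1}(z_2/z_1)^n,\qquad
\pr cn_{21}=(-1)^{l-1}(a,b)\,n^{k+l-1}(z_1/z_2)^n .
\]
Here the symmetry of the pairing is used in the second formula. Put $w=z_2/z_1$. \cref{cumul-cor} then gives
\[
\kp_{11}=(-1)^{k-1}(a,b)\sum_{n>0}n^{k+l-1}\frac{w^n+(-1)^{k+l}q^nw^{-n}}{1-q^n}.
\]

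\emph{Step 3: match with $\Phi_{k+l}$.} Start from
\[
\Phi_0(w,q)=-\sum_{n>0}\frac1n\frac{w^n+q^nw^{-n}}{1-q^n}.
\]
Applying $D^j$ multiplies $w^{\pm n}$ by $(\pm n)^j$, so
\[
\Phi_j(w,q)=-\sum_{n>0}n^{j-1}\frac{w^n+(-1)^jq^nw^{-n}}{1-q^n}.
\]
With $j=k+l$ this gives $\kp_{11}=(-1)^k(a,b)\Phi_{k+l}(z_2/z_1,q)$. This identity holds as expansions in the region $|z_1|>|z_2|>|q z_1|$, which is the ordered expansion $\io$ used throughout.

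\emph{Step 4: the self-cumulant.} Taking $b=a$ and $l=k$ gives $\pr cn_{11}=(-1)^{k-1}(a,a)n^{2k-1}$; note that $z$ cancels. \cref{cumul-cor} then yields
\[
\kp_2=2(-1)^{k-1}(a,a)\sum_{n>0}n^{2k-1}\frac{q^n}{1-q^n}=2(-1)^{k-1}(a,a)F_{2k}(q).
\]

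There is no real obstacle. The only delicate point is the sign bookkeeping: the factor $(-n)^{k-1}$ from reindexing, the order of the commutator in \cref{cn}, and the sign $(-1)^j$ produced by $D^j$ acting on $w^{-n}$. The $k=0$ and $j=0$ cases also need checking, where $n^{-1}$ appears; they follow from the same formulas.
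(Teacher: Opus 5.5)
Your proposal is correct and follows essentially the same route as the paper: you specialize \cref{cumul-cor} by computing the commutators $c^{(n)}_{ij}$ and compare the result with the series for $\Phi_j$ and $F_{2k}$, and your $\kappa_2$ computation is identical to the paper's. The only difference is organizational: you compute $c^{(n)}_{12}$ and $c^{(n)}_{21}$ for general $k,l$ and match them against the termwise $D^{k+l}$-derivative of $\Phi_0$, whereas the paper computes only the case $k=l=0$ (obtaining $(a,b)\Phi_0(z_2/z_1,q)$) and then applies $D_{z_1}^kD_{z_2}^l$, using $D_{z_1}=-D_x$ and $D_{z_2}=D_x$ for $x=z_2/z_1$.
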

\begin{proof}
Let $X_i(z_i)=\sum_n x_i(n)z_i^{-n}=\BF_{a_i}(z_i)$,
where $a_1=a$ and $a_2=b$.
Then
\[\pr cn_{ij}=[x_i(n),x_j(-n)](z_j/z_i)^n
=\sbr{\frac{a_{i,n}}{-n},\frac{a_{j,-n}}n}
(z_j/z_i)^n
=-\frac{(a_i,a_j)}{n}(z_j/z_i)^n.
\]
Using $x=z_2/z_1$, we have by \cref{cumul-cor}
\[
\kp_{11}(\BF_a(z_1),\BF_b(z_2))
=-(a,b)\sum_{n>0}\frac1n\frac{x^n+q^nx^{-n}}{1-q^n}
=(a,b)\Phi_0(x,q).
\]
We have $D_{z_1}=-D_x$ and $D_{z_2}=D_x$.
Therefore
\[\kp_{11}(D^k\BF_a(z_1),D^l\BF_b(z_2))
=(-1)^kD^{k+l}_x\kp_{11}(\BF_a(z_1),\BF_b(z_2))
=(-1)^k(a,b)\Phi_{k+l}(x,q).
\]
Similarly, for $X_1(z)=\sum_n x_1(n)z^{-n}=D^k\BF_a(z)$,
\[\pr cn_{11}
=[x_1(n),x_1(-n)]
=[(-n)^{k-1}a_n,n^{k-1}a_{-n}]
=(-1)^{k-1}n^{2k-1}(a,a).\]
Therefore by \cref{cumul-cor}
\[\kp_2(D^k\BF_a(z))
=2(-1)^{k-1}(a,a)\sum_{n>0}n^{2k-1}\frac{q^n}{1-q^n}
=2(-1)^{k-1}(a,a)F_{2k}(q).
\qedhere
\]
\end{proof}

\begin{example}
We have
\[\log\ta\rbr{\nop{e^{t \BD k_a(z)}}}
=\oh\kp_2(D^k\BF_a(z))t^2
=(-1)^{k-1}(a,a)F_{2k}(q)t^2.\]
In particular, if $(a,a)=0$, then $f(z)=\nop{e^{\BD k_a(z)}}$
satisfies $\ta(f)=1$ (\cf \cref{ta(f)}).
\end{example}

Define the spaces of bosonic descendants
\begin{equation}\label{log des}
\begin{gathered}
\D=\spn\sets{\BD k_a(z)}{a\in H,\, k\ge1}\sbs\Ho(V)\sbs\Fo(V),\\
\Db=\spn\sets{\BD k_a(z)}{a\in H,\, k\ge0}\sbs\Ho(V)\sbs\Fo(V).
\end{gathered}
\end{equation}
We assign weight $k$ to the bosonic descendant $\BD k_a(z)$.


\def\Fil{W}
\begin{lemma}\label{cumul in Cr}
For $k\ge0$, $l\ge1$, and $a,b\in H$, we have
\[
\kp_{11}(\BD k_a(z_0),\BD l_b(z_1))\in W_{k+l}\cC_1(z_0,z_1).
\]
For $X(z)=\sum_{i=1}^m s_i\BD{k_i}_{a_i}(z)$,
where $k_i\ge1$ and $s_i$ are commuting variables,
we have
\[
[s_is_j]\kp_2(X)\in W_{k_i+k_j}\cC_0.
\]
\end{lemma}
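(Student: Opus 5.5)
The plan is to read both claims off \cref{cum des} and then check the weight filtrations on $\cC_1$ and $\cC_0$.

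\emph{First claim.} By \cref{cum des},
\[
\kp_{11}(\BD k_a(z_0),\BD l_b(z_1))=(-1)^k(a,b)\Phi_{k+l}(z_1/z_0,q).
\]
Since $l\ge1$, we have $k+l\ge1$. Hence $\Phi_{k+l}(z_1/z_0)$ is one of the generators of $\cC_1(z_0,z_1)$ listed in \cref{Cr}, and by definition it has weight $k+l$. The scalar $(-1)^k(a,b)\in K$ has weight $0$. So the cumulant lies in $W_{k+l}\cC_1$. The hypothesis $l\ge1$ is needed here, because $\Phi_0=\log\Te$ is not in $\cC_1$.

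\emph{Second claim.} I would first expand $\kp_2(X)$ bilinearly. From \cref{cumul-cor}, $\kp_2(X)=2\sum_{n>0}\frac{q^n}{1-q^n}\pr cn_{11}$, and $\pr cn_{11}=[x(n),x(-n)]$ is bilinear in the modes of $X$. Therefore $\kp_2$ is the quadratic form attached to a symmetric bilinear form $\kp_2(X,Y)$ with $\kp_2(X,X)=\kp_2(X)$. Writing $X=\sum_i s_iX_i$ with $X_i=\BD{k_i}_{a_i}(z)$, the coefficient $[s_is_j]\kp_2(X)$ equals $\kp_2(X_i)$ if $i=j$ and $2\kp_2(X_i,X_j)$ if $i\ne j$.

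Next I would compute the mixed term directly, as in the proof of \cref{cum des}. Let $x_i(n)$ denote the modes of $X_i$. With $x_i(n)=(-n)^{k_i-1}a_{i,n}$, we get
\[
[x_i(n),x_j(-n)]=(-1)^{k_i-1}n^{k_i+k_j-1}(a_i,a_j).
\]
This gives
\[
2\kp_2(X_i,X_j)=2\rbr{(-1)^{k_i-1}+(-1)^{k_j-1}}(a_i,a_j)F_{k_i+k_j}(q).
\]
If $k_i+k_j$ is odd, the signs $(-1)^{k_i-1}$ and $(-1)^{k_j-1}$ are opposite, so the coefficient vanishes. If $k_i+k_j$ is even, then $k_i+k_j\ge2$ because $k_i,k_j\ge1$, and so $F_{k_i+k_j}\in W_{k_i+k_j}\cC_0$. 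The diagonal case $i=j$ is the special case $F_{2k_i}$, which is exactly \cref{cum des}. Alternatively, one can obtain $\kp_2(X)$ as the limit $z_1\to z_0$ of the sum of $\kp_{11}$ terms; the direct computation avoids the pole of $\Phi$ at $1$.

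The main obstacle I expect is the parity cancellation in the mixed term. Odd-index series such as $F_{k_i+k_j}$ with $k_i+k_j$ odd are not quasi-modular, so the argument relies on the symmetric sum of the two orderings cancelling them. The condition $k_i\ge1$ is also used essentially, to exclude $F_0$, which is not quasi-modular.
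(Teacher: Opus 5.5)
Your proposal is correct and follows the paper's proof. The first claim is read off directly from \cref{cum des}. For the second, the paper likewise uses \cref{cumul-cor} to write $\kp_2(X)=2\sum_{i,j}(-1)^{k_i-1}s_is_j(a_i,a_j)F_{k_i+k_j}(q)$ and observes that the terms with $k_i+k_j$ odd cancel, so your symmetrized bilinear form is just a repackaging of that same double sum.
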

\begin{proof}
By \cref{cum des},
\[
\kp_{11}(\BD k_a(z_0),\BD l_b(z_1))
=(-1)^k(a,b)\Phi_{k+l}(z_1/z_0,q)
\in W_{k+l}\cC_1(z_0,z_1).
\]

For $X(z)=\sum_n x(n)z^{-n}=\sum_i s_i\BD{k_i}_{a_i}(z)$,
\cref{cumul-cor} implies
\[\kp_2(X)=2\sum_{n>0}\frac{q^n}{1-q^n}\sum_{i,j}c_{ij}^{(n)},\]
where
\[c_{ij}^{(n)}
=[x_i(n),x_j(-n)]
=(-1)^{k_i-1}s_is_j(a_i,a_j)n^{k_i+k_j-1}.
\]
Therefore
\[
\kp_2(X)
=2\sum_{i,j}(-1)^{k_i-1} s_is_j(a_i,a_j)F_{k_i+k_j}(q).
\]
If $k_i+k_j$ is odd, then the corresponding summands cancel.
Otherwise,
$F_{k_i+k_j}\in W_{k_i+k_j}\cC_0$,
hence $[s_is_j]\kp_2(X)\in W_{k_i+k_j}\cC_0$.
\end{proof}

\subsection{Traces of admissible operators}
In this section we will assume that $H$ is a quadratic space over $K=\bQ$, although all arguments work also for quadratic superspaces.
Let $V=V(H)$ be the Fock space corresponding to $H$.

\begin{definition}\label{adm oper:def}
An \idef{admissible field} is a linear combination of the fields
\begin{equation}
\nop{X_1(z)\dots X_k(z)}\in \Fo(V),
\end{equation}
where $X_i\in \D$.
An \idef{admissible operator} is an operator of the form
\begin{equation}\label{adm oper}
T=[z^0]\cT(z)\in\iend_0(V),
\end{equation}
where $\cT(z)$ is an admissible field.
\end{definition}

We call $\sum_{i=1}^r k_i$ the weight of the normally ordered monomial
\[
\nop{\BD{k_1}_{a_1}(z)\dots \BD{k_r}_{a_r}(z)}.
\]
An admissible field has weight at most $w$ if it is a linear
combination of such monomials of weight at most $w$.
An admissible operator $T=[z^0]\cT(z)$ has weight at most $w$
if $\cT(z)$ has weight at most $w$.

We will study $q$-traces of admissible fields and admissible operators.
However, for our applications to Hilbert schemes,
we need a minor modification of the normalized $q$-trace
$\ta:\iend(V)\to \bQ\pser q$.
Let $X_0\in\Db$ be a fixed even field of weight zero and let
\eq{W(z)=\nop{e^{X_0(z)}}\in\Fo(V).
}
Define the modified $q$-trace
\eq{
\ta_W(f)=\frac{\ta(W(z_0)f)}{\ta(W(z_0))}
\in \bQ\pser q[z_0^{\pm1}],\qquad f\in\iend(V).}
If $f\in\iend_0(V)$, then $\ta_W(f)\in \bQ\pser q$.
Note that $\ta_W=\ta$ for $X_0=0$.

\begin{lemma}\label{gauss-mod trace}
Let $X_1,\dots,X_k\in \Ho(V)$.
Then
\[
\log\ta_W\rbr{\prod_{i=1}^k\nop{e^{t_iX_i(z_i)}}}
=\sum_{i=1}^k t_i\kp_{11}(X_0,X_i)
+\sum_{0<i<j}t_it_j\kp_{11}(X_i,X_j)
+\oh\sum_{i=1}^k t_i^2\kp_2(X_i).
\]
\end{lemma}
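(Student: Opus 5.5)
We reduce the statement to \cref{th-gauss2} applied to the $k+1$ linear Heisenberg fields $X_0(z_0),X_1(z_1),\dots,X_k(z_k)$, with an extra formal parameter $t_0$ attached to $X_0$. Since $X_0\in\Db\sbs\Ho(V)$, the field $W(z_0)=\nop{e^{X_0(z_0)}}$ is the specialization $t_0=1$ of $\nop{e^{t_0X_0(z_0)}}$. By \cref{th-gauss2}, or rather its reformulation \cref{cumul-cor} with indices running over $\{0,1,\dots,k\}$, we obtain
\[
\log\ta\rbr{\prod_{i=0}^k\nop{e^{t_iX_i(z_i)}}}
=\sum_{0\le i<j\le k}\kp_{11}(X_i,X_j)t_it_j+\oh\sum_{i=0}^k\kp_2(X_i)t_i^2 .
\]
Setting all $t_i=0$ for $i\ge1$ gives $\log\ta(W(z_0))=\oh\kp_2(X_0)t_0^2$.

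Next we subtract the two expressions and set $t_0=1$. The $t_0^2$ terms cancel, and the terms with $i=0<j$ become $\sum_{i\ge1}t_i\kp_{11}(X_0,X_i)$. The remaining terms are exactly $\sum_{0<i<j}t_it_j\kp_{11}(X_i,X_j)+\oh\sum_{i\ge1}t_i^2\kp_2(X_i)$. Since $\ta_W(f)=\ta(W(z_0)f)/\ta(W(z_0))$, this difference of logarithms is precisely $\log\ta_W\rbr{\prod_{i=1}^k\nop{e^{t_iX_i(z_i)}}}$, which is the claimed formula.

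The main obstacle is justifying the specialization $t_0=1$. \Cref{th-gauss2} is an identity in $K\lser{z_0}\dots\lser{z_k}\pser q\pser{t_0,\dots,t_k}$, and substituting $t_0=1$ is not automatically legitimate there. We would handle this by working at the truncated level used in the proof of \cref{th-gauss2}. After truncating to modes $\n n\le N$, both sides are finite in each $q$-degree, and the exponential identity $\ta(e^{sa}e^{tb}d)=\exp(q^kstc/(1-q^k))\ta(d)$ of \cref{ta-exp} holds with $s,t$ specialized to any value once one works $q$-adically. Indeed, since $k>0$, the coefficient of $q^N$ receives only finitely many contributions. So we would rerun the proof of \cref{th-gauss2} with $t_0=1$ directly, using \cref{ta-exp} factor by factor, and then pass to the limit $N\to\infty$ coefficientwise as before.

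Alternatively, we can observe that the left-hand side is polynomial in $t_0$ in each fixed $(q,z)$-coefficient, so the $t_0=1$ substitution commutes with taking logarithms. This uses that $\ta(W(z_0))=\exp(\oh\kp_2(X_0))$ is invertible in $\bQ\pser q$ with constant term $1$.
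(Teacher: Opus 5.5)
Your proposal is correct and is essentially the paper's proof: attach $t_0$ to $X_0$, apply \cref{cumul-cor} to the $k+1$ fields $X_0(z_0),\dots,X_k(z_k)$, subtract $\log\ta\rbr{\nop{e^{t_0X_0(z_0)}}}=\oh\kp_2(X_0)t_0^2$, and set $t_0=1$. The paper performs this specialization without comment, so your justification fills in a step the paper leaves implicit rather than changing the argument: each coefficient is polynomial in $t_0$, and $\ta(W(z_0))=\exp\rbr{\oh\kp_2(X_0)}$ is a unit of $\bQ\pser q$, so one may specialize the exponentiated identity and then divide.
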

\begin{proof}
We have by \cref{cumul-cor} 
\begin{multline*}
\log\ta_W\rbr{\prod_{i=1}^k\nop{e^{t_iX_i(z_i)}}}
=\log\ta\rbr{\nop{e^{t_0X_0(z_0)}}\prod_{i=1}^k\nop{e^{t_iX_i(z_i)}}}
\bigg|_{t_0=1}
-\log\ta\rbr{\nop{e^{t_0X_0(z_0)}}}
\Big|_{t_0=1}
\\
=\rbr{\sum_{i=1}^k t_0t_i\kp_{11}(X_0,X_i)
+\sum_{0<i<j}t_it_j\kp_{11}(X_i,X_j)
+\oh\sum_{i=1}^k t_i^2\kp_2(X_i)}
\bigg|_{t_0=1}.
\tag*{\qedhere}
\end{multline*}
\end{proof}

\begin{theorem}
\label{adm fields:th}
Let $\cT_1,\dots,\cT_r\in\Fo(V)$ be admissible fields
of weights at most $w_1,\dots,w_r$, respectively, and let $w=\sum_i w_i$.
Then
\[\ta_W(\cT_1(z_1)\dots\cT_r(z_r))\in W_{w}\cC_r.
\]
\end{theorem}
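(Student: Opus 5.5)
By multilinearity it suffices to treat the case where each $\cT_i(z_i)$ is a single normally ordered monomial
\[
\cT_i(z_i)=\nop{X_{i1}(z_i)\dots X_{i k_i}(z_i)},\qquad X_{ij}=\BD{m_{ij}}_{a_{ij}}\in\D,\quad m_{ij}\ge1,\quad \sum_j m_{ij}\le w_i .
\]
We extract such a monomial from a normally ordered exponential by a coefficient:
\[
\nop{X_{i1}\dots X_{ik_i}}=[s_{i1}\cdots s_{ik_i}]\,\nop{e^{Y_i(z_i)}},\qquad Y_i=\sum_j s_{ij}X_{ij}.
\]
Here $s_{ij}$ are formal commuting variables, and $Y_i$ is again a linear Heisenberg field with coefficients in $\bQ[s]$. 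This holds because the normally ordered product of linear fields is symmetric, so the multilinear coefficient of $\nop{e^{Y}}$ is exactly the normally ordered monomial. Consequently
\[
\ta_W\rbr{\cT_1(z_1)\dots\cT_r(z_r)}=\Big[\prod_{i,j}s_{ij}\Big]\,\ta_W\rbr{\prod_{i=1}^r\nop{e^{Y_i(z_i)}}} .
\]

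Next we apply \cref{gauss-mod trace} with $t_i=1$ and the fields $Y_i$. The logarithm becomes a quadratic polynomial in the $s$ variables:
\[
\log\ta_W=\sum_{i}\kp_{11}(X_0,Y_i)+\sum_{0<i<j}\kp_{11}(Y_i,Y_j)+\tfrac12\sum_i\kp_2(Y_i).
\]
By bilinearity of $\kp_{11}$ in its arguments (visible from \cref{cumul-cor}), each piece expands as follows.
\begin{itemize}
\item The linear terms are $s_{ij}\,\kp_{11}(X_0,X_{ij})$. By \cref{cumul in Cr} with $k=0$, each lies in $W_{m_{ij}}\cC_r$; here $X_0$ sits at $z_0$, which is fixed to $1$ in $\cC_r$.
\item The mixed terms are $s_{ij}s_{i'j'}\,\kp_{11}(X_{ij},X_{i'j'})$. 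By \cref{cum des} they equal $\pm(a,b)\Phi_{m_{ij}+m_{i'j'}}(z_{i'}/z_i)$, which lies in $W_{m_{ij}+m_{i'j'}}\cC_r$.
\item The diagonal terms $[s_{ij}s_{ij'}]\kp_2(Y_i)$ lie in $W_{m_{ij}+m_{ij'}}\cC_0$, again by \cref{cumul in Cr}.
\end{itemize}
Assign the variable $s_{ij}$ weight $-m_{ij}$. Then every term of $\log\ta_W$ has total weight $\le0$, meaning each coefficient of $s^\alpha$ lies in $W_{\sum\alpha_{ij}m_{ij}}\cC_r$.

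Exponentiating preserves this property, since the filtration is multiplicative and the set of such series is closed under products. Because every monomial in $\log\ta_W$ has positive degree in $s$, extracting a fixed $s$-coefficient involves only finitely many terms of the exponential. Therefore $[\prod s_{ij}]\exp(\cdots)$ lies in $W_{\sum_{i,j}m_{ij}}\cC_r\subseteq W_w\cC_r$. Concretely, this is Wick's theorem: the result is a sum over matchings of the $X_{ij}$, where a factor is either paired with another factor or paired with $X_0$.

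The remaining care lies in the formal justification that the identity
\[
\ta_W\rbr{\prod_i\nop{e^{Y_i(z_i)}}}=\exp(\text{quadratic})
\]
holds as an equality of expansions in $\cL_r$. The point is that the cumulants, which are a priori formal series in $\bQ\lser{z_1}\dots\lser{z_r}\pser q$, coincide with the expansions $\io_r$ of the meromorphic functions $\Phi_k(z_j/z_i)$ in the region $|z_1|>\dots>|z_r|>|q|$. This is read off from the explicit series in \cref{cum des}. The variable $z_0=1$ lies outside this region, and this is the potentially delicate point: we must check that the pairing of $X_0(z_0)$ with $X_i(z_i)$ expands in the correct region $|z_0|>|z_i|$, so that it gives $\Phi_{m}(z_i/z_0)$ with $z_0=1$. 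Since $\iend$-degree-zero considerations make $\ta_W$ independent of $z_0$, setting $z_0=1$ is legitimate. Finally, in the superspace case the same argument goes through with Koszul signs, which do not affect the weight bookkeeping.
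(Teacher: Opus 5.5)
Your proposal is correct and follows the paper's proof essentially step for step. It uses the same generating variables $s_{ij}$ weighted by descendant order, the Gaussian formula of \cref{gauss-mod trace}, and the weight bounds of \cref{cumul in Cr} for the three kinds of contractions. It then extracts the multilinear coefficient after exponentiating, exactly as the paper does.

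One small correction to your closing remark. At the field level $\ta_W(\cT_1(z_1)\dots\cT_r(z_r))$ is not independent of $z_0$: for $r=1$, $\cT_1=\BD1_b$ and $X_0=\BF_a$ it equals $(a,b)\Phi_1(z_1/z_0)$. It is only homogeneous of degree zero in $(z_0,\dots,z_r)$. So putting $z_0=1$ is just the identification built into the definition of $\cC_r$. Nor does $z_0=1$ lie outside the expansion region, since $U_r$ is exactly $|z_0|=1>|z_1|>\dots>|z_r|>|q|$. Independence of $z_0$ holds only after taking constant terms.
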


\begin{proof}
We can assume that $\cT_i(z)=\nop{X_{i1}(z)\dots X_{ip_i}(z)}$,
where $X_{ij}=\BD{k_{ij}}_{a_{ij}}$ and $\sum_{j=1}^{p_i} k_{ij}\le w_i$.
Let $P=\sets{(i,j)}{i\in [r],\, j\in[p_i]}$.
For $(i,j)\in P$, introduce a variable $s_{ij}$ of degree $k_{ij}$.
Let
\begin{gather*}
Z(\bz,\bs)=\ta_W\rbr{\prod_{i=1}^r\nop{e^{L_i(z_i)}}},\qquad
L_i(z_i)=\sum_{j=1}^{p_i}s_{ij}X_{ij}(z_i).
\end{gather*}
By \cref{gauss-mod trace}, we have
\[\log Z(\bz,\bs)
=\sum_{i=1}^r \kp_{11}(X_0,L_i)
+\sum_{i<j}\kp_{11}(L_i,L_j)
+\oh\sum_{i=1}^r\kp_2(L_i).
\]
By \cref{cumul in Cr},
\begin{enumerate}
\item
the coefficient of $s_{ij}$ in
$\kp_{11}(X_0,L_i)$ belongs to
$W_{k_{ij}}\cC_r$,
\item
the coefficient of $s_{ij}s_{kl}$ in $\kp_{11}(L_i,L_k)$ belongs to
$W_{k_{ij}+k_{kl}}\cC_r$,
\item
the coefficient of $s_{ij}s_{il}$ in $\kp_2(L_i)$ belongs to
$W_{k_{ij}+k_{il}}\cC_0$.
\end{enumerate}
Thus every coefficient of $\log Z(\bz,\bs)$
has weight at most its degree in the variables $s_{ij}$.
The same is therefore true for $Z(\bz,\bs)$.
Therefore
\[
\sbr{\prod*_{(i,j)\in P}s_{ij}}
Z(\bz,\bs)=\ta_W(\cT_1(z_1)\dots\cT_r(z_r))
\]
has weight at most $\sum_{i,j}k_{ij}\le\sum_i w_i=w$.
\end{proof}

\begin{theorem}\label{qm for adm oper:th}
Let $T_1,\dots,T_r\in\iend_0(V)$ be admissible operators
of weights at most $w_1,\dots,w_r$, respectively, and let $w=\sum_i w_i$.
Then
\[
\ta_W(T_1\dots T_r)
\in W_w\cC_0.
\]
\end{theorem}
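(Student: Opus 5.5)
The plan is to combine \cref{adm fields:th} with the constant term theorem \cref{CTT}. Write $T_i=[z^0]\cT_i(z)$, where each $\cT_i$ is an admissible field of weight at most $w_i$. Take independent variables $z_1,\dots,z_r$. The product $\cT_1(z_1)\dots\cT_r(z_r)$ is a well-defined element of $\hom_0(V,V\lser{z_1}\dots\lser{z_r})$, since each $\cT_i$ has degree zero and $V$ is bounded below. Its coefficient of $z_1^0\cdots z_r^0$ should be exactly $T_1\cdots T_r$. This holds because each graded component $V_n\to V_n$ of the product only involves finitely many mode combinations for fixed total exponents, so extracting coefficients commutes with composition.

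The first step is to establish the identity
\[
\ta_W(T_1\dots T_r)=[z_1^0\dots z_r^0]\,\ta_W\rbr{\cT_1(z_1)\dots\cT_r(z_r)}.
\]
On the left the trace is an element of $\bQ\pser q$; note that $W(z_0)$ contributes only through its degree-zero part once multiplied against a degree-zero operator. On the right the trace is a formal series in $\bQ\lser{z_1}\dots\lser{z_r}\pser q$, possibly with $z_0$. The identity follows because $\ta_W$ is computed degreewise by finite-dimensional traces and is linear in the coefficients.

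We may fix the normalisation $z_0=1$. Since $X_0$ has weight zero and everything in sight has total degree zero, $\ta_W$ of a degree-zero product is independent of $z_0$. This matches the convention $z_0=1$ in the isomorphism in \cref{Cr}.

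The second step is to apply \cref{adm fields:th}, which gives $F:=\ta_W(\cT_1(z_1)\dots\cT_r(z_r))\in W_w\cC_r$. Here we must check that the formal series produced by the trace computation coincides with the expansion $\io_r(F)$ in the region $1>\n{z_1}>\dots>\n{z_r}>\n q$. In the proof of \cref{th-gauss2} the contraction terms $c_{ij}=\sum_n c^{(n)}_{ij}$ with $i<j$ involve powers $(z_j/z_i)^n$ with $n>0$. The $q^n/(1-q^n)$ terms, and the $X_0$ terms with $z_0=1$, likewise expand as $\Phi_k$ does in \cref{poch}. Hence the formal Gaussian formula is precisely $\io_r$ applied to the product of $\Phi_k(z_j/z_i)$, $F_{2k}$ and exponentials of these. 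Extracting the coefficient in $s$ preserves this compatibility.

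Finally, \cref{CTT} gives $\CT_r(F)\in W_w\cC_0$, which equals $\ta_W(T_1\cdots T_r)$.

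I expect the main obstacle to be the bookkeeping in the second step. We must verify that the ordering of the formal expansion, with the variables iterated as $\lser{z_1}\dots\lser{z_r}$ and operators composed left to right, matches the chosen analytic region. The $X_0$ insertion also needs care, since it sits at $z_0=1$ to the left, which corresponds to $\n{z_0}>\n{z_1}$. The $z_0$-independence claim can be checked directly from \cref{gauss-mod trace}: $\kp_{11}(X_0,X_i)$ depends only on $z_i/z_0$. Once $z_0=1$ is set, the generator $\Phi_k(z_i/z_0)$ becomes $\Phi_k(z_i)$, consistent with the definition of $\cC_r$.
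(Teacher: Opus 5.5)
Your proposal is correct and follows the paper's proof: write $T_i=[z_i^0]\cT_i(z_i)$, use \cref{adm fields:th} to get $\ta_W(\cT_1(z_1)\dots\cT_r(z_r))\in W_w\cC_r$, and then take simultaneous constant terms via \cref{CTT}. The extra bookkeeping you add is left implicit in the paper but consistent with it; this covers matching the formal Gaussian expansion with $\io_r$ and setting $z_0=1$, which is justified by degree-zero homogeneity.
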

\begin{proof}
Choose admissible fields $\cT_i(z_i)$ of weight at most $w_i$ such that
$T_i=[z_i^0]\cT_i(z_i)$.
By \cref{adm fields:th},
\[\ta_W(\cT_1(z_1)\dots \cT_r(z_r)) \in W_{w}\cC_r.
\]
Hence, by the constant term Theorem \ref{CTT},
\[
\ta_W(T_1\dots T_r)
=[z_1^0\dots z_r^0]\ta_W(\cT_1(z_1)\dots \cT_r(z_r))\in
W_{w}\cC_0.
\qedhere
\]
\end{proof}

\begin{remark}\label{rem:super qm}
\Cref{adm fields:th,qm for adm oper:th} remain valid for a quadratic
superspace $H$ and its super Fock space, with $q$-traces replaced by
super $q$-traces, provided that the fixed field $X_0$ defining
$W=\nop{e^{X_0}}$ is even.
In the super setting, an admissible field is a finite linear combination of normally ordered monomials
$\nop{X_1(z)\dots X_k(z)}$,
where the $X_i\in\D$ are homogeneous.
In the proof of \cref{adm fields:th}, for every homogeneous $X_{ij}$,
we introduce a supercommuting auxiliary variable $s_{ij}$ with
$p(s_{ij})=p(X_{ij})$.
Then $s_{ij}X_{ij}$ is even, so the exponential computations used above
apply without change. The required normally ordered monomials are
recovered by extracting the coefficients of the corresponding ordered
monomials in the variables $s_{ij}$.
The resulting pairwise contractions are, up to the Koszul signs,
the same scalar functions $\Phi_k$ and $F_{2k}$ as above, multiplied
by the pairing. Hence the proofs of
\cref{adm fields:th,qm for adm oper:th} carry over to the super setting.
\end{remark}

\section{Applications}

\subsection{Application to Hilbert schemes}
\label{app Hilb}
Given vector bundles $E_1,\dots,E_r$ on a smooth projective surface $X$, we consider the generating series
\[
\ang{\ch_{k_1}^{E_1}\dots\ch_{k_r}^{E_r}}
=\sum_{n\ge0}q^n\int_{\hb Xn}
\ch_{k_1}(\hb En_1)\dots\ch_{k_r}(\hb En_r)\, \fe_t(T_{\hb  Xn})
\]
and its reduced (or normalized) form
\[\ang{\ch_{k_1}^{E_1}\dots\ch_{k_r}^{E_r}}'
=\ang{\ch_{k_1}^{E_1}\dots\ch_{k_r}^{E_r}}/\ang{}
=\ph q^{\hi(X)}\ang{\ch_{k_1}^{E_1}\dots\ch_{k_r}^{E_r}}.
\]

\begin{remark}
For $d=\sum_{i=1}^r k_i$, we have
\[\ang{\ch_{k_1}^{E_1}\dots\ch_{k_r}^{E_r}}'
=t^d\ang{\ch_{k_1}^{E_1}\dots\ch_{k_r}^{E_r}}'_{t=1}.
\]
Therefore, in what follows, we assume that $t=1$.
\end{remark}

\begin{theorem}
If the canonical class $K_X$ is numerically trivial, then the series
\[\ang{\ch_{k_1}^{E_1}\dots\ch_{k_r}^{E_r}}'\]
is quasi-modular of weight at most $\sum_i(k_i+2)$.
\end{theorem}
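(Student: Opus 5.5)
The plan is to reduce the statement to \cref{qm for adm oper:th}, in its super version from \cref{rem:super qm}, applied to the quadratic superspace $H=H^*(X,\bQ)$ and the Fock space $V\iso\bH$. By the preceding remark we may take $t=1$. Since $K_X$ is numerically trivial, $c_1(K_X)=0$ in $H^2(X,\bQ)$. This has two consequences.
\begin{enumerate}
\item By \cref{W for K=0}, $\W(L,z)=\nop{e^{\BF_L(z)}}$, where $L=\cO_X\ts e^t$ and $c_1(L)=t\cdot1_X\in H^0$.
\item The hypothesis $K_Xa=0$ of \cref{LQW:th} holds for every $a\in H$.
\end{enumerate}
I take $X_0=\BF_L\in\Db$, which is even and of weight zero, and $W(z)=\W(L,z)$. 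We have $\ta(W)=1$, either by the computation after \cref{W for K=0} or because $(1_X,1_X)=-\int_X1=0$. Hence $\ta_W(f)=\ta(W(z_0)f)$. For $f\in\iend_0(\bH)$, only $[z_0^0]W$ contributes to this, because the other coefficients have nonzero degree and have zero trace by \cref{ta=0}. Combining this with \cref{int as trace} and the Summary gives
\[\ang{\ch_{k_1}^{E_1}\dots\ch_{k_r}^{E_r}}'=\ta_W\rbr{\prod_i\bch^{E_i}_{k_i}}.\]

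Next I expand each $\bch^{E_i}_{k_i}$ using \cref{ch as G} as $\sum_{j\le k_i}\fG_{k_i-j}(\ch_j(E_i))$. By multilinearity it then suffices to show that each $\fG_k(a)$ is an admissible operator of weight at most $k+2$, since $k_i-j+2\le k_i+2$. For $k\ge1$, \cref{LQW:th} writes $\fG_k(a)$ in terms of two kinds of sums over $\n\la=0$.
\begin{enumerate}
\item The sums $\sum_{\ell(\la)=k+2}\fa_\la(a)/\la!$. By \cref{pk:cor} with exponent $0$, these are combinations of $[z^0]\nop{\AD_{a_1}(z)\cdots\AD_{a_{k+2}}(z)}$. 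Since $\AD_b=\BD1_b\in\D$, each such field has weight $k+2$.
\item The sums $\sum_{\ell(\la)=k}(p_2(\la)-2)\fa_\la(e_Xa)/\la!$. By \cref{pk:cor} with exponent $2$, the $p_2$ part is a combination of $[z^0]\nop{D^2\AD_{a_1}\AD_{a_2}\cdots\AD_{a_k}}$. Here $D^2\AD_b=\BD3_b$, so the weight is $3+(k-1)=k+2$. The constant $-2$ part has weight $k$.
\end{enumerate}
For $k=0$, the formula for $\fG_0$ gives weight $2$. The sums for $\ell(\la)=1$ with $\n\la=0$ are empty, so the case $k=1$ causes no trouble. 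All the coefficients involved are rational, as are the pairings $(a_i,a_j)$. So \cref{qm for adm oper:th} yields $\ta_W\in W_w\cC_0$ with $w\le\sum_i(k_i+2)$.

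I expect the main obstacle to be the super bookkeeping. The classes $a_i$ and $\ch_j(E_i)$ can be odd, and the fields $\AD_{e_i}$ appearing in \cref{pk:cor} through the Casimir carry Koszul signs. We must check that the super form of \cref{th-gauss2}, with supercommuting auxiliary variables $s_{ij}$ as in \cref{rem:super qm}, produces the same contractions $\Phi_k$ and $F_{2k}$ up to sign. This is what keeps the weight filtration argument intact. A second, minor point is to check that $W$ is even. It is, because $c_1(L)\in H^0$.
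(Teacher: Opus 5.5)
Your proposal is correct and follows essentially the same route as the paper: you identify the normalized series with $\ta_\W(\Phi_\bul)$ via $\ta(\W)=1$, expand through \cref{ch as G}, \cref{LQW:th} and \cref{pk:cor} to show that each $\fG_k(a)$ is admissible of weight at most $k+2$, and conclude by \cref{qm for adm oper:th} together with \cref{rem:super qm}. One small correction: the classes $\ch_j(E_i)\in H^{2j}(X,\bQ)$ are always even, as the paper notes, so only the Casimir components $a_1,\dots,a_r$ produced by \cref{pk:cor} can be odd, and that case is exactly what \cref{rem:super qm} covers.
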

\begin{proof}
By assumption $c_1(K_X)=0$, in $H^2(X,\bQ)$.
Let $\Phi(n)=\ch_{k_1}(\hb En_1)\dots\ch_{k_r}(\hb En_r)\in\bH_n$ for $n\ge0$.
By \cref{int as trace}, we have
\[
\ang{\ch_{k_1}^{E_1}\dots\ch_{k_r}^{E_r}}
=\str\rbr{\qd \W\Phi_\bul}=\str_q(\W\Phi_\bul),
\]
where $\W=\W(L,z)=\nop{e^{\BF_L(z)}}$
by \cref{W for K=0}
for $L=\cO_X\ts e^t$.

Let $\ta(f)=\frac{\str_q(f)}{\str_q(1)}$ be the normalized (super) $q$-trace.
Since $a=c_1(L)=t$ satisfies $(a,a)=0$,
we have $\ta(\W)=1$ by \cref{ta(f)},
hence $\ta_\W(f)=\frac{\ta(\W f)}{\ta(\W)}=\ta(\W f)$.
Therefore
\[
\ang{\ch_{k_1}^{E_1}\dots\ch_{k_r}^{E_r}}'
=\ta(\W\Phi_\bul)
=\ta_\W(\Phi_\bul).
\]

By \cref{ch as G}, $\Phi_\bul$ is a linear combination of products
\[
\fG_{\ell_1}(a_1)\dots\fG_{\ell_r}(a_r),
\qquad
0\le \ell_i\le k_i,
\]
where the $a_i$ are even cohomology classes.
By \cref{LQW:th}, every operator $\fG_k(a)$ is a linear combination
of operators \cref{p-expr}
\[
f_{j,r}(b)
=\sum_{\ov{\n\la=0}{\ell(\la)=r}}\frac{p_j(\la)}{\la!}\fa_\la(b),
\]
for $(j,r)=(0,k+2)$, $(0,k)$ or $(2,k)$ if $k\ge1$ and $(j,r)=(0,2)$ if $k=0$.
By \cref{pk:cor}, every such operator
is a linear combination of operators
\[T=[z^0]\nop{D^jA_{a_1}(z)\cdot A_{a_2}(z)\dots A_{a_r}(z)}\]
for suitable $a_1,\dots,a_r\in H$,
where
\[A_a(z)=\sum_{n\ne0}a_nz^{-n}=D\BF_a(z)\]
and $D^jA_a(z)=D^{j+1}\BF_a(z)$
are contained in the space $\D$ of bosonic descendants \cref{log des}.
The operator $T$ is admissible of weight $\le j+r$.
Therefore the operator $\fG_k(a)$ is admissible
of weight $\le k+2$.
By \cref{qm for adm oper:th} (see also \cref{rem:super qm}),
we obtain $\ta_\W(\fG_{\ell_1}(a_1)\dots\fG_{\ell_r}(a_r))\in\cC_0$
of weight at most $\sum_i(\ell_i+2)\le\sum_{i}(k_i+2)$.
Therefore $\ta_\W(\Phi_\bul)$ is quasi-modular of weight at most
$\sum_{i}(k_i+2)$.
\end{proof}

\subsection{Relation with zero modes of the Heisenberg vertex algebra}

A graded vertex algebra
(see \eg \cite{kac_vertex,frenkel_vertex,zhu_modular})
is a graded vector space $V=\bop_{n\ge0}V_n$
equipped with a linear map
\[Y:V\to\F^\gr(V),\qquad
a\mto Y(a,z)=\sum_{n\in\bZ}a_{(n)} z^{-n-1},
\]
and an element $\one\in V_0$ such that
\begin{enumerate}
\item $Y$ has degree zero, meaning that $\deg a_{(n)}=\deg a-n-1$ for homogeneous $a\in V$.
\item $Y(\one,z)=\id_V$.
\item $a_{(n)}\one=0$ for $n\ge0$ and $a_{(-1)}\one=a$.
\item $[T,Y(a,z)]=\dd_z Y(a,z)$,
where $T\in\End(V)$ is defined by $T(a)=a_{(-2)}\one$.
\item $(z-w)^n [Y(a,z),Y(b,w)]=0$ for $n\gg0$ (locality).
\end{enumerate}

For $a\in V_d$, the degree $\deg a=d$ is also called the conformal weight of $a$.
We have $\deg a_{(n)}=d-n-1$.
In particular, the operator
\eq{o(a):=a_{(d-1)}}
has degree zero.
Equivalently,
\[o(a)=[z^0]\wtl Y(a,z),\qquad \wtl Y(a,z)=Y(z^\bd a,z)\in\Fo(V),\]
where $\d:V\to V$ is the degree operator.
This defines a linear map
\[o:V\to\iend_0(V),\]
called the zero-mode map.
For homogeneous $a,b\in V$, the zero modes satisfy
\[
[o(a),o(b)]
=\sum_{j\ge0}\binom{\deg(a)-1}{j}o\rbr{a_{(j)}b}.
\]
Thus the image $o(V)\sbs\iend_0(V)$ is a Lie algebra.

\begin{remark}
The same expression represents the commutator in Zhu's algebra
$A(V)$ \cite{zhu_modular}
\[
[a]*[b]-[b]*[a]
=\sum_{j\ge0}\binom{\deg(a)-1}{j}\sbr {a_{(j)}b}.
\]
The zero-mode map on the space $V$, however, does not in general
factor through $A(V)$.
\end{remark}

Assuming that $\dim V_n<\infty$ for all $n\ge0$,
we define the $q$-trace
$\tr_q:\iend(V)\to K\lser q$
and the normalized $q$-trace $\ta:\iend(V)\to K\lser q$
in the same way as before.

\begin{remark}
If $V$ is moreover a vertex operator algebra with conformal vector
$\om$ and $Y(\om,z)=\sum_{n\in\bZ}L_nz^{-n-2}$,
then $\d=L_0$.
The central charge $c$ is defined by
\[
[L_m,L_n]=(m-n)L_{m+n}
+\frac{c}{12}(m^3-m)\de_{m,-n}\id_V.
\]
In \cite{zhu_modular}, for a graded module $M$ over $V$,
one considers the one-point trace functions
\[
Z_M(v,\rho)=\tr_M\bigl(q^{L_0-c/24}o(v)\bigr),
\qquad q=e^{2\pi i\rho},
\]
and proves modular transformation properties under suitable hypotheses.
For $M=V$,
\[Z_V(v,\rho)=q^{-c/24}\tr_V(q^\d o(v))=q^{-c/24}\tr_q(o(v)).\]
\end{remark}

In particular,
let $V=V(H)$ be the Fock space for a quadratic space $H$.
Then $V$ has a VOA structure, called the Heisenberg vertex algebra
(see \eg \cite{frenkel_vertex}).
Identifying $a\in H$ with $a_{-1}\one\in V$,
we have (note that $a_0=0\in\End(V)$)
\[Y(a,z)=\sum_{n\in\bZ}a_nz^{-n-1}.
\]
More generally, for $a^1,\dots,a^r\in H$ and $n_1,\dots,n_r\ge1$,
one has \cite[2.3.8]{frenkel_vertex}
\eq{\label{Y}
Y(a^1_{-n_1}\dots a^r_{-n_r}\one,z)
=\nop{\dd_z^{(n_1-1)}Y(a^1,z)\dots \dd_z^{(n_r-1)}Y(a^r,z)},
}
where $\dd_z^{(k)}=\dd_z^k/k!$.
The standard conformal vector is
$\om=\frac12\sum_i(e_i)_{-1}(e^i)_{-1}\one$,
where $(e_i)_i$ and $(e^i)_i$ are dual bases of $H$.
With this choice, the central charge is $c=\dim H$.

Using the Dedekind eta function $\eta(\rho)=q^{1/24}\ph q$, we obtain
\[
\ta(o(v))=\frac{\tr_q(o(v))}{\tr_q(1)}
=\ph q^c\tr_q(o(v))=\eta(\rho)^cZ_V(v,\rho).
\]

\begin{theorem}
The Lie algebra $o(V)$ coincides with the space of admissible operators
\[
\spn_K \sets{[z^0]\nop{X_1(z)\dots X_r(z)}}{r\ge0,\, X_i\in\D}.
\]
\end{theorem}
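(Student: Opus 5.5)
The plan is to compute $\wtl Y(v,z)=Y(z^\d v,z)$ explicitly on a spanning set of $V$ and compare it with the admissible fields. Since $V=S(\hh_-)$ is spanned by monomials $v=a^1_{-n_1}\dots a^r_{-n_r}\one$ with $a^i\in H$ and $n_i\ge1$, and $o$ is linear, it suffices to treat such monomials. For such $v$ we have $\deg v=\sum_i n_i$, so $\wtl Y(v,z)=z^{\sum n_i}Y(v,z)$. By \cref{Y},
\[
\wtl Y(v,z)=\nop{\prod_{i=1}^r z^{n_i}\dd_z^{(n_i-1)}Y(a^i,z)}.
\]
Here the powers of $z$ are scalars and can be moved inside the normal ordering.

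The key step is to rewrite each factor $z^{n}\dd_z^{(n-1)}Y(a,z)$ as a combination of bosonic descendants. We have $zY(a,z)=\sum_m a_mz^{-m}=\AD_a(z)=\BD1_a(z)$, using $a_0=0$. Moreover, $z^{n}\dd_z^{n-1}z^{-1}=\prod_{j=1}^{n-1}(D-j)$ as operators, where $D=z\dd_z$. This follows from the identity $z^k\dd_z^k=D(D-1)\cdots(D-k+1)$ together with conjugation by $z^{-1}$, which shifts $D\mapsto D-1$.

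Combining these, each factor becomes
\[
\frac{1}{(n-1)!}\prod_{j=1}^{n-1}(D-j)\,\BD1_a(z)=\frac{1}{(n-1)!}\prod_{j=1}^{n-1}(D-j)\,D\BF_a(z),
\]
which is a linear combination of $\BD k_a(z)$ with $1\le k\le n$. Its leading term is $\frac{1}{(n-1)!}\BD n_a(z)$. Hence $\wtl Y(v,z)$ is a normally ordered polynomial in elements of $\D$, i.e.\ an admissible field, and $o(v)$ is admissible. This gives $o(V)\subseteq$ admissible operators.

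For the reverse inclusion, I argue that the relevant change of basis is triangular. For fixed $a$, the span of $\{z^n\dd_z^{(n-1)}Y(a,z)\}_{n\ge1}$ equals the span of $\{\BD k_a(z)\}_{k\ge1}$, by a unitriangular (up to nonzero scalars) change of basis. Inverting it, each $\BD k_a(z)$ is a finite combination of the fields $z^n\dd_z^{(n-1)}Y(a,z)$. Normal ordering is multilinear, so $\nop{X_1(z)\dots X_r(z)}$ with $X_i\in\D$ expands, via \cref{Y}, into a combination of fields $\wtl Y(v,z)$ with $v$ monomials as above. The case $r=0$ gives $\wtl Y(\one,z)=\id$. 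Taking $[z^0]$ yields the reverse inclusion.

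The claim that $o(V)$ is a Lie algebra was already noted. The only delicate point is checking the operator identity $z^n\dd_z^{n-1}z^{-1}=\prod_{j=1}^{n-1}(D-j)$, which can be verified on monomials $z^m$. This step also shows that the fields $\wtl Y(v,z)$ are exactly the admissible fields.
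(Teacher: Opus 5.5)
Your proposal is correct and follows essentially the paper's own proof. The paper also writes $z^n\partial_z^{(n-1)}Y(a,z)=\binom{D-1}{n-1}A_a(z)$ with $A_a=D\phi_a$, which is your $\frac{1}{(n-1)!}\prod_{j=1}^{n-1}(D-j)\,D\phi_a(z)$, substitutes this into the normally ordered formula for $Y(a^1_{-n_1}\cdots a^r_{-n_r}\one,z)$, and for the converse uses that the polynomials $\binom{x-1}{n-1}$, $n\ge1$, form a basis of $K[x]$, which is exactly your triangular inversion.
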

%


\begin{proof}
Define the polynomials
\[
P_n(x)=\prod_{i=1}^{n-1}(x-i),\qquad
B_n(x)
=\frac{P_n(x)}{(n-1)!}=\binom{x-1}{n-1}.
\]
For $D=z\dd_z$, we have
\[P_n(D)z^m=(m-1)\dots(m-n+1)z^m=z^n\dd_z^{n-1}z^{m-1},\]
hence $z^n\dd_z^{n-1}z\i=P_n(D)$.

For $a\in H$, we have $A_a(z)=D\BF_a(z)=\sum_{n\ne0}a_n z^{-n}$,
hence $Y(a,z)=z\i A_a(z)$.
Therefore
\[z^n\dd_z^{(n-1)}Y(a,z)=B_n(D) A_a(z).\]

The Fock space $V=V(H)$ is spanned by the states
\[
v=a^1_{-n_1}\cdots a^r_{-n_r}\one,
\qquad
a^i\in H,\, n_i\ge1,
\]
of degree (conformal weight) $d=n_1+\cdots+n_r$.
Applying \cref{Y}, we obtain
\[z^{d}Y(v,z)
=\nop{\prod*_{i=1}^r z^{n_i}\dd_z^{(n_i-1)}Y(a^i,z)}
=\nop{\prod*_{i=1}^rB_{n_i}(D)A_{a^i}(z)}.
\]
Therefore
\[
o(v)=[z^0]z^{d}Y(v,z)
=[z^0]\nop{\prod*_{i=1}^rB_{n_i}(D)A_{a^i}(z)}
\]
is an admissible operator.

Conversely, the polynomials $B_n(x)=\binom{x-1}{n-1}$ for $n\ge1$ form a basis of $K[x]$.
Let $c_{k,n}\in K$ be such that
$x^k=\sum_{n=1}^{k+1}c_{k,n}B_n(x)$ for $k\ge0$.
Then
\[
D^kA_a(z)=\sum*_{n=1}^{k+1}c_{k,n}B_n(D)A_a(z).
\]
Therefore every normally ordered monomial
\[
\nop{D^{k_1}A_{a^1}(z)\cdots D^{k_r}A_{a^r}(z)}
\]
is a finite linear combination of fields
\[\nop{\prod*_{i=1}^r B_{n_i}(D)A_{a^i}(z)}
=z^{\sum_i n_i}
\nop{\prod*_{i=1}^r \dd_z^{(n_i-1)}Y(a^i,z)}
=z^d Y(v,z)
\]
for $v=a^1_{-n_1}\cdots a^r_{-n_r}\one$ and $d=n_1+\dots+n_r$.
This implies that the admissible operator
\[[z^0]\nop{D^{k_1}A_{a^1}(z)\cdots D^{k_r}A_{a^r}(z)}
\]
is a linear combination of operators $o(v)$.
\end{proof}

The above result implies that if $v\in V_d$, then $o(v)$ is an admissible operator of weight at most $d$.

\begin{remark}
The same argument implies that the space of admissible fields
coincides with the image of the linear map $\wtl Y:V\to\Fo(V)$,
where $\wtl Y(v,z)=z^dY(v,z)$ for $v\in V_d$.
\end{remark}

\begin{corollary}
For all operators $f$ in the algebra $\ang{o(V)}_{alg}\sbs\iend_0(V)$ generated by $o(V)$, the normalized $q$-trace $\ta(f)$ is quasi-modular.
Moreover, if $v_i\in V_{d_i}$ for $i\in[r]$, then
$\ta(o(v_1)\dots o(v_r))$ is quasi-modular of weight at most $\sum_i d_i$.
\end{corollary}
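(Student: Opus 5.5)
The plan is to reduce both assertions to \cref{qm for adm oper:th} with $X_0=0$, so that $\ta_W=\ta$. The second assertion comes first, since the first follows from it by linearity.

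For the weight bound, let $v_i\in V_{d_i}$. By the preceding theorem and the remark after it, $o(v_i)$ is an admissible operator of weight at most $d_i$. Concretely, write $v_i$ as a linear combination of monomial states $a^1_{-n_1}\cdots a^s_{-n_s}\one$ with $\sum_j n_j=d_i$. The proof of that theorem expresses the $o$-image of such a monomial as
\[[z^0]\nop{\prod*_j B_{n_j}(D)A_{a^j}(z)}.\]
Each factor $B_{n_j}(D)A_{a^j}(z)$ is a linear combination of $D^kA_{a^j}=\BD{k+1}_{a^j}$ with $k\le n_j-1$, so each has weight at most $n_j$. Expanding the normally ordered product multilinearly therefore gives monomials of weight at most $\sum_j n_j=d_i$. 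Applying \cref{qm for adm oper:th} with $w_i=d_i$ then yields $\ta(o(v_1)\cdots o(v_r))\in W_{\sum_i d_i}\cC_0$.

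For the first assertion, note that the algebra $\ang{o(V)}_{alg}$ is spanned by the identity and by products $o(v_1)\cdots o(v_r)$. We may take the $v_i$ homogeneous, since $o$ is linear and $V=\bop_d V_d$. The identity is itself $o(\one)$, and $\ta(1)=1\in\cC_0$. By linearity of $\ta$, each such product has quasi-modular normalized $q$-trace by the previous paragraph, hence so does every $f$ in the algebra.

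The only step requiring care is the weight bookkeeping: checking that $B_n(x)=\binom{x-1}{n-1}$ has degree $n-1$, so that $B_n(D)A_a$ has weight at most $n$, and that weight is additive under normal ordering. Both follow directly from the definitions given before \cref{adm fields:th}.
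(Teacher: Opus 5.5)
Your proposal is correct and follows the paper's own route. The paper also combines three ingredients: the preceding theorem identifying $o(V)$ with the admissible operators, the resulting bound that $o(v)$ has weight at most $d$ for $v\in V_d$ (since $\deg B_n=n-1$), and \cref{qm for adm oper:th} with $X_0=0$, finishing the first assertion by linearity of $\ta$. Your explicit weight bookkeeping and the observation $o(\one)=\id$ supply details the paper leaves implicit.
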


\begin{remark}
Dong--Mason--Nagatomo \cite{dong_quasi}
prove for the Heisenberg vertex operator algebra that
\[
\ta(o(v))=\eta(\rho)^c Z_V(v,\rho)
\]
is quasi-modular.
Thus their theorem gives precisely the quasi-modularity of our
normalized $q$-trace for a single zero-mode insertion.
Our corollary gives the corresponding statement for arbitrary products
\[
\ta\rbr{o(v_1)\cdots o(v_r)}\in\QM(1,\bQ).
\]
This is a normalized zero-mode $r$-point correlator,
equal to the simultaneous constant term of the normalized $r$-point function
\[[z_1^0\dots z_r^0]
\frac{\tr\rbr{q^{L_0}\prod_iY(z_i^{L_0}v_i,z_i)}}{\tr\rbr{q^{L_0}}}.
\]
\end{remark}
\defcite\BO{bloch_character}
\subsection{Relation with Bloch--Okounkov quasi-modularity results}

Let $H=\bQ a$ with $(a,a)=1$, and let $V=V(H)$ be the corresponding
rank-one bosonic Fock space.  We identify
\[
V=S\left(\bigoplus_{n>0}\bQ a_{-n}\right)
\simeq \bQ[p_1,p_2,\dots]=\La,
\]
where $\La$ is the algebra of symmetric functions and
$a_{-n}=p_n$ are power sum symmetric functions.
For $n>0$, the  operator $a_{-n}$ acts by multiplication by $p_n$ and
$a_n=n\frac{\dd}{\dd p_n}$.

The vector space $\La$ has a basis
consisting of Schur functions
\[\ket\la=s_\la\in\La,\qquad \la\in\Par,
\]
corresponding to the standard charge-zero infinite-wedge basis under the boson--fermion correspondence (see \eg
\cite[\S5.2]{kac_vertex} or \cite[\S5.3]{frenkel_vertex}).
The element $\ket\la$ has degree (conformal weight) $\n\la=\sum_{i\ge1}\la_i$.
Given a function $f:\Par\to\bQ$,
define the operator
\eq{\hat f\in\iend_0(V),\qquad
\hat f\ket\la=f(\la)\ket\la.}
Then the $q$-bracket \BO[(0.16)]
\eq{\ang f_q=\sum_\la f(\la)q^{\n\la}/\sum_\la q^{\n\la}
=\ta\rbig{\hat f}}
is precisely the normalized $q$-trace of $\hat f$.
In \BO one proves quasi-modularity of $q$-brackets of a certain class of functions $f:\Par\to\bQ$.
We are going to relate this statement to our quasi-modularity result for normalized $q$-traces.

The Bloch--Okounkov operators $\tD_k$, $k\ge1$,
act on $\La$ by
\eq{
\tD_k\ket\la =\rbr{p_k(\la)-\ksi(-k)} \ket\la,
}
where \BO[(0.11)]
\[
\ksi(s)=(2^s-1)\ze(s)
\]
and $p_k(\la)$ are the shifted power sums \cite[(0.13)]{bloch_character},
which can be characterized as follows.
Let
\eq{ \label{W la}
W_\la(s)
=\sum_{i\ge1}e^{s(\la_i-i+\frac12)}
=\sum_{k\ge0}Q_k(\la)s^{k-1},
}
in particular, $Q_0(\la)=1$ and $Q_1(\la)=0$.
Then \cite[(0.18)]{bloch_character}
\eq{\label{Q}
p_k(\la)-\ksi(-k)
=\left. \dd_s^k\rbr{W_\la(s)-\frac1s} \right|_{s=0}
=k!Q_{k+1}(\la).
}
Consequently the generating operator
\[
\cT(s)
=
\frac1s\id+\sum_{k\ge1}\tD_k\frac{s^k}{k!}
\]
is characterized by
\[
\cT(s)\ket\la=W_\la(s)\ket\la.
\]

\begin{proposition}\label{BO bosonization}
Under the boson--fermion correspondence, we have
\[
\cT(s)=\frac1{\vsi(s)} [z^0]\nop{e^{X_s(z)}},
\]
where
\[
\vsi(s)=e^{s/2}-e^{-s/2},\qquad
X_s(z)
=\sum_{n\ne0}\frac{\vsi(ns)}n a_nz^{-n}.
\]
\end{proposition}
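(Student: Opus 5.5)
The plan is to transport the identity through the boson--fermion correspondence and compare both sides as operators on the fermionic Fock space. Both sides are degree-zero operators, and the Schur basis is the charge-zero wedge basis, so it suffices to show that $[z^0]\nop{e^{X_s(z)}}$ acts diagonally on $\ket\la$ with eigenvalue $\vsi(s)W_\la(s)$.

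The natural fermionic object is the Bloch--Okounkov generating operator
\[
\cE_0(s)=\sum_{k\in\bZ+\frac12}e^{sk}\nop{\psi_k\psi_k^*}+\frac1{\vsi(s)},
\]
which satisfies $\cE_0(s)\ket\la=W_\la(s)\ket\la$. This is essentially the content of \cite[(0.18)]{bloch_character}: the normally ordered sum counts $e^{s(\la_i-i+1/2)}$ over occupied positions minus the vacuum contribution, and $\sum_{i\ge1}e^{s(-i+1/2)}=1/\vsi(s)$. So the claim reduces to the operator identity
\[
\vsi(s)\,\cE_0(s)=[z^0]\nop{e^{X_s(z)}}.
\]

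To obtain it, I would use the standard bosonization of the fermion bilinear $\psi(x)\psi^*(y)$. With $\psi(x)=\sum\psi_kx^{k}$ and similar conventions, one has
\[
\nop{\psi(x)\psi^*(y)}=\frac{1}{x-y}\Bigl(\nop{\exp\Bigl(\sum_{n\ne0}\frac{x^{-n}-y^{-n}}{-n}\,a_n\Bigr)}-1\Bigr)
\]
up to powers of $x,y$ fixed by conventions; see \cite[\S5.2]{kac_vertex} and \cite[\S5.3]{frenkel_vertex}. I would then specialize $x=ze^{s/2}$ and $y=ze^{-s/2}$. The exponent becomes
\[
\sum_{n\ne0}\frac{z^{-n}(e^{-ns/2}-e^{ns/2})}{-n}\,a_n=X_s(z).
\]
The full two-point function $\psi(x)\psi^*(y)$ includes the vacuum contraction, and this contraction should combine with the prefactor $\frac{(xy)^{1/2}}{x-y}=\frac1{\vsi(s)}$. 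This should give
\[
\vsi(s)\,\psi(ze^{s/2})\psi^*(ze^{-s/2})\cdot(\text{monomial in } z)=\nop{e^{X_s(z)}}.
\]
Taking $[z^0]$ of the left-hand side picks out the diagonal terms $\sum_k e^{sk}\psi_k\psi_k^*$, with the vacuum normalization producing exactly the constant $1/\vsi(s)$ term. Hence it yields $\vsi(s)\cE_0(s)$.

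The main obstacle is keeping the conventions consistent: the indexing of $\psi_k$ by half-integers, the charge-zero shift operators $e^{\pm\alpha}$ and $z^{\pm a_0}$ (which cancel in $\psi\psi^*$ on charge zero), the sign and normalization $a_n\leftrightarrow n\partial_{p_n}$, and the factor $(xy)^{1/2}$. I would fix these by checking the identity on the vacuum. There, $[z^0]\nop{e^{X_s}}\one=\one$ must equal $\vsi(s)W_\emptyset(s)=\vsi(s)\cdot\frac1{\vsi(s)}=1$.

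As a second check I would compare the coefficient of $s$ against the energy operator. The linear term of $\nop{e^{X_s}}$ is $\sum a_nz^{-n}$, whose constant term vanishes. The quadratic terms should reproduce $|\la|$ up to $Q_2$, consistent with \cref{Q}. These checks pin down any residual sign ambiguity, and the proposition then follows by applying the boson--fermion isomorphism to both sides.
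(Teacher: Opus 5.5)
Your proposal is correct and follows the paper's proof. You bosonize $\psi(ze^{s/2})\psi^*(ze^{-s/2})$ as $\vsi(s)^{-1}\nop{e^{X_s(z)}}$, take $[z^0]$ to isolate the diagonal fermion bilinear, and read off $W_\lambda(s)$ from the occupied positions $\lambda_i-i+\frac12$. Your normally ordered $\mathcal E_0(s)$, with the constant $1/\vsi(s)$ split off, handles more carefully the vacuum regularization that the paper leaves implicit in the formally divergent sum $\sum_r e^{sr}\psi_r\psi_r^*$.
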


\begin{proof}
Let
\[
\psi(z)=\sum_{r\in\bZ+\frac12}\psi_rz^r,
\qquad
\psi^*(z)=\sum_{r\in\bZ+\frac12}\psi_r^*z^{-r}
\]
be the fermionic fields.
The standard boson--fermion correspondence gives
\[
\psi(e^{s/2}z)\psi^*(e^{-s/2}z)
=
\frac1{\vsi(s)}
\nop{\exp\left(
\sum_{n\ne0}\frac{\vsi(ns)}n a_nz^{-n}
\right)}.
\]
Taking the constant coefficient in $z$ gives
\[
\frac1{\vsi(s)}
[z^0]\nop{e^{X_s(z)}}
=
\sum_{r\in\bZ+\frac12}e^{sr}\psi_r\psi_r^*.
\]
The occupied positions of $\ket\la$ are
$r_i=\la_i-i+\frac12$ for $i\ge1$.
Hence
\[
\sum_{r\in\bZ+\frac12}e^{sr}\psi_r\psi_r^*\ket\la
=\rbr{\sum_{i\ge1}e^{s(\la_i-i+\frac12)}} \ket\la
=W_\la(s)\ket\la.
\]
The result follows from the characterization of $\cT(s)$ above.
\end{proof}

The preceding formula expresses the Bloch--Okounkov operators $\tD_k$ directly in terms of our bosonic descendants.
Indeed, consider $D=z\dd_z$ and the bosonic descendant
\[
A(z)=D\BF_a(z)=\sum_{n\ne0}a_nz^{-n}.
\]
Then
\[
X_s(z)
=\sum_{m\ge0} \frac{s^{2m+1}}{2^{2m}(2m+1)!}\,
D^{2m}A(z).
\]
Thus $X_s(z)$ is a formal linear combination of elements of $\D$,
and the coefficient of every power of $s$ in
$[z^0]\nop{e^{X_s(z)}}$
is a finite linear combination of admissible operators.
Since
\[
\frac1{\vsi(s)}=\frac1s+\sum_{j\ge1}c_js^{2j-1},
\qquad c_j\in\bQ,
\]
we obtain the following.

\begin{corollary}\label{BO operators admissible}
For every $k\ge1$, the Bloch--Okounkov operator $\tD_k$ is admissible,
of weight at most $k+1$.
\end{corollary}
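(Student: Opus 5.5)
We extract the coefficient of $s^k/k!$ on both sides of \cref{BO bosonization}. By definition of $\cT(s)$, $\tD_k=k![s^k]\cT(s)$, so $\tD_k$ is determined by $[s^k]$ of $\frac1{\vsi(s)}[z^0]\nop{e^{X_s(z)}}$.

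First expand $\nop{e^{X_s(z)}}=\sum_{r\ge0}\frac1{r!}\nop{X_s(z)^r}$. Using $X_s(z)=\sum_{m\ge0}\frac{s^{2m+1}}{2^{2m}(2m+1)!}D^{2m}A(z)$, we have $D^{2m}A(z)=\BD{2m+1}_a(z)\in\D$, which has weight $2m+1$ and comes with $s^{2m+1}$. Multilinearity of the normally ordered product then gives
\[
[s^N]\nop{e^{X_s(z)}}=\sum c\,\nop{\BD{2m_1+1}_a(z)\cdots\BD{2m_r+1}_a(z)},
\]
a finite sum over tuples with $\sum_i(2m_i+1)=N$. The sum is finite because each factor contributes at least one power of $s$, so $r\le N$. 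Each such monomial is admissible of weight exactly $N$. In other words, the power of $s$ equals the weight. For $N=0$ the only term is the identity, i.e. the empty normally ordered product of weight $0$, which is admissible.

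Next we multiply by $\frac1{\vsi(s)}=\frac1s+\sum_{j\ge1}c_js^{2j-1}$. Every monomial here has the form $s^{e}$ with $e\ge-1$. Hence
\[
[s^k]\frac1{\vsi(s)}[z^0]\nop{e^{X_s(z)}}=\sum_{e\ge-1}c'_e\,[z^0][s^{k-e}]\nop{e^{X_s(z)}},
\]
where $k-e\le k+1$. Each term is therefore $[z^0]$ of an admissible field of weight at most $k+1$, and so $\tD_k$ is admissible of weight at most $k+1$. Note that taking $[z^0]$ commutes with extracting $s$-coefficients, because each coefficient involves only finitely many fields.

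The main point needing care is the bookkeeping that $s$-degree equals weight, which is exactly what the explicit expansion of $X_s(z)$ provides. The identity term ($e=-1$, $N=k+1$) is what makes the bound $k+1$ rather than $k$. We should also check that scalar multiples of the identity count as admissible of weight $0$; this holds by the $r=0$ case of \cref{adm oper:def}. There is no constant-term subtlety, since the $\frac1s$ pole only shifts degrees.
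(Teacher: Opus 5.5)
Your proposal is correct and is essentially the paper's argument: in the normally ordered exponential of $X_s(z)$, the power of $s$ equals the weight, because $s^{2m+1}$ comes with the weight-$(2m+1)$ descendant $D^{2m}A(z)$, and the leading $s^{-1}$ term of $1/\varsigma(s)$ shifts this by one, giving the bound $k+1$. Your extra checks make explicit what the paper leaves implicit: finiteness of each coefficient, the identity as the empty weight-zero monomial, and $\widetilde D_k=k!\,[s^k]\mathcal T(s)$.
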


\begin{proof}
The coefficient of $s^{2m+1}$ in $X_s(z)$ is a scalar multiple of
$D^{2m}A(z)$, which has weight $2m+1$.
Hence the coefficient of $s^N$ in
$\nop{e^{X_s(z)}}$ is a linear combination of normally ordered
monomials of weight $N$.
Multiplication by $\vsi(s)^{-1}$ shows that the coefficient of $s^k$
in $\cT(s)$ has weight at most $k+1$.
\end{proof}

By \cref{qm for adm oper:th},
we conclude that for $k_1,\dots,k_r\ge1$,
the $q$-trace
\eq{
\ta\rbr{\tD_{k_1}\cdots\tD_{k_r}}
=\ang{
\prod*_{i=1}^r \rbr {p_{k_i}-\ksi(-k_i)} }_q
}
is quasi-modular of weight at most $\sum_i(k_i+1)$.
Using the functions $Q_k:\Par\to\bQ$,
which satisfy $k!Q_{k+1}(\la)=p_k(\la)-\ksi(-k)$ \cref{Q},
we obtain that
\eq{
\ang{Q_{k_1}\dots Q_{k_r}}_q,\qquad k_i\ge2,
}
is quasi-modular of weight at most $\sum_i k_i$.

Consider the algebra of shifted symmetric functions
\[
\La^*=\bQ[Q_2,Q_3,\dots],
\qquad
\wt(Q_k)=k.
\]
We conclude that for every homogeneous $f\in\La^*$ of weight $k$,
the $q$-bracket $\ang f_q$ is quasi-modular of weight at most $k$.
Thus the Bloch--Okounkov quasi-modularity theorem for shifted
symmetric functions follows from our quasi-modularity theorem for
admissible operators.

\bibliographystyle{halpha}
\bibliography{q-trace}

@Article{zhu_modular,
  author     = {Zhu, Yongchang},
  journal    = {J. Amer. Math. Soc.},
  title      = {{M}odular invariance of characters of vertex operator algebras},
  year       = {1996},
  doi        = {10.1090/S0894-0347-96-00182-8},
  issn       = {0894-0347},
  number     = {1},
  pages      = {237--302},
  url        = {http://dx.doi.org/10.1090/S0894-0347-96-00182-8},
  volume     = {9},
  eprinttype = {arxiv},
  fjournal   = {Journal of the American Mathematical Society},
  mrclass    = {17B69 (11F22 17B68)},
  mrnumber   = {1317233 (96c:17042)},
  mrreviewer = {Chong Ying Dong},
}

@Book{lepowsky_introduction,
  author     = {Lepowsky, James and Li, Haisheng},
  title      = {Introduction to vertex operator algebras and their representations},
  doi        = {10.1007/978-0-8176-8186-9},
  eprinttype = {arxiv},
  isbn       = {0-8176-3408-8},
  pages      = {xiv+318},
  publisher  = {Birkh\"{a}user Boston, Inc., Boston, MA},
  series     = {Progress in Mathematics},
  url        = {https://doi.org/10.1007/978-0-8176-8186-9},
  volume     = {227},
  mrclass    = {17B69 (17B67 17B68 81R10)},
  mrnumber   = {2023933},
  mrreviewer = {Dra\v{z}en Adamovi\'{c}},
  year       = {2004},
}

@Article{alhwaimel_hilbert,
  author     = {Alhwaimel, Mazen M. and Qin, Zhenbo},
  journal    = {Pure Appl. Math. Q.},
  title      = {Hilbert schemes of points on surfaces and multiple {$q$}-zeta values},
  year       = {2024},
  doi        = {10.4310/pamq.241112043526},
  eprint     = {2310.10812},
  issn       = {1558-8599,1558-8602},
  number     = {6},
  pages      = {2615--2646},
  url        = {https://doi.org/10.4310/pamq.241112043526},
  volume     = {20},
  MRCLASS    = {14C05 (11F23 11M32)},
  MRREVIEWER = {Scott\ R.\ Nollet},
  eprinttype = {arxiv},
  fjournal   = {Pure and Applied Mathematics Quarterly},
  mrnumber   = {4829861},
}

@Article{bloch_character,
  author     = {Bloch, Spencer and Okounkov, Andrei},
  journal    = {Adv. Math.},
  title      = {The character of the infinite wedge representation},
  year       = {2000},
  doi        = {10.1006/aima.1999.1845},
  eprint     = {alg-geom/9712009},
  issn       = {0001-8708,1090-2082},
  number     = {1},
  pages      = {1--60},
  url        = {https://doi.org/10.1006/aima.1999.1845},
  volume     = {149},
  MRCLASS    = {11F22 (17B66)},
  MRREVIEWER = {Mirko\ Primc},
  fjournal   = {Advances in Mathematics},
  mrnumber   = {1742353},
}

@Article{okounkov_hilbert,
  author     = {Okounkov, Andrei},
  journal    = {Funktsional. Anal. i Prilozhen.},
  title      = {Hilbert schemes and multiple {$q$}-zeta values},
  year       = {2014},
  doi        = {10.1007/s10688-014-0054-z},
  eprint     = {1404.3873},
  issn       = {0374-1990},
  number     = {2},
  pages      = {79--87},
  url        = {https://doi.org/10.1007/s10688-014-0054-z},
  volume     = {48},
  eprinttype = {arxiv},
  fjournal   = {Funktsional\cprime ny\u{\i} Analiz i ego Prilozheniya},
  mrclass    = {14C05 (11B65 14N10)},
  mrnumber   = {3288178},
  mrreviewer = {\c{S}erban B\u{a}rc\u{a}nescu},
}

@Article{carlsson_vertex,
  author        = {Carlsson, Erik},
  title         = {Vertex Operators and Moduli Spaces of Sheaves},
  year          = {2009},
  eprint        = {0906.1825},
  note          = {PhD thesis},
  archiveprefix = {arXiv},
}

@Article{alhwaimel_qins,
  author        = {Alhwaimel, Mazen M.},
  journal       = {Manuscripta Math.},
  title         = {Toward {Q}in's conjecture on {H}ilbert schemes of points and quasi-modular forms},
  year          = {2025},
  doi           = {10.1007/s00229-025-01630-1},
  eprint        = {2407.01554},
  issn          = {0025-2611,1432-1785},
  number        = {3},
  pages         = {Paper No. 31, 40},
  url           = {https://doi.org/10.1007/s00229-025-01630-1},
  volume        = {176},
  MRCLASS       = {14C05 (11M32)},
  MRREVIEWER    = {Roberto\ Notari},
  archiveprefix = {arXiv},
  fjournal      = {Manuscripta Mathematica},
  mrnumber      = {4900530},
}

@Misc{qin_quick,
  author = {Qin},
  title  = {A quick survey from {S}-duality conjecture of {V}afa-{W}itten to a conjecture of {O}kounkov},
  note   = {Presentation at the SQuaRE Workshop “Moduli of sheaves on surfaces via Bridgeland stability”, American Institute of Mathematics, San Jose, CA},
  year   = {2022},
}

@Article{carlsson_vertexa,
  author   = {Carlsson, Erik},
  journal  = {Adv. Math.},
  title    = {Vertex operators and quasimodularity of {C}hern numbers on the {H}ilbert scheme},
  year     = {2012},
  doi      = {10.1016/j.aim.2011.10.003},
  issn     = {0001-8708},
  number   = {5},
  pages    = {2888--2907},
  url      = {https://doi.org/10.1016/j.aim.2011.10.003},
  volume   = {229},
  fjournal = {Advances in Mathematics},
  mrclass  = {14C05 (17B69)},
  mrnumber = {2889150},
}

@Book{deligne_quantum,
  title      = {{Q}uantum fields and strings: a course for mathematicians. {V}ol. 1},
  editor     = {Deligne, Pierre and Etingof, Pavel and Freed, Daniel S. and Jeffrey, Lisa C. and Kazhdan, David and Morgan, John W. and Morrison, David R. and Witten, Edward},
  eprinttype = {arxiv},
  isbn       = {0-8218-1198-3},
  note       = {Material from the Special Year on Quantum Field Theory held at the Institute for Advanced Study, Princeton, NJ, 1996--1997},
  pages      = {Vol. 1: xxii+723 pp.; Vol. 2: pp. i--xxiv and 727--1501},
  publisher  = {American Mathematical Society},
  address    = {Providence, RI},
  mrclass    = {81-06 (81T30 81Txx)},
  mrnumber   = {1701618 (2000e:81010)},
  year       = {1999},
}

@Article{dong_quasi,
  author   = {Dong, Chongying and Mason, Geoffrey and Nagatomo, Kiyokazu},
  journal  = {International Mathematics Research Notices},
  title    = {Quasi-modular forms and trace functions associated to free boson and lattice vertex operator algebras},
  year     = {2001},
  doi      = {10.1155/S1073792801000204},
  issn     = {1073-7928,1687-0247},
  number   = {8},
  pages    = {409--427},
  mrnumber = {1827085},
}

@Book{kac_vertex,
  author    = {Kac, Victor},
  title     = {Vertex algebras for beginners},
  doi       = {10.1090/ulect/010},
  edition   = {Second},
  isbn      = {0-8218-1396-X},
  pages     = {vi+201},
  publisher = {American Mathematical Society, Providence, RI},
  series    = {University Lecture Series},
  url       = {https://doi.org/10.1090/ulect/010},
  volume    = {10},
  MRCLASS   = {17B69},
  mrnumber  = {1651389},
  year      = {1998},
}

@Book{frenkel_vertex,
  author     = {Frenkel, Edward and Ben-Zvi, David},
  title      = {Vertex algebras and algebraic curves},
  doi        = {10.1090/surv/088},
  edition    = {Second},
  eprinttype = {arxiv},
  isbn       = {0-8218-3674-9},
  pages      = {xiv+400},
  publisher  = {American Mathematical Society, Providence, RI},
  series     = {Mathematical Surveys and Monographs},
  url        = {https://doi.org/10.1090/surv/088},
  volume     = {88},
  mrclass    = {17B69 (14D21 14H81)},
  mrnumber   = {2082709},
  year       = {2004},
}

@Article{lehn_cup,
  author     = {Lehn, Manfred and Sorger, Christoph},
  journal    = {Invent. Math.},
  title      = {The cup product of {H}ilbert schemes for {$K3$} surfaces},
  year       = {2003},
  doi        = {10.1007/s00222-002-0270-7},
  eprint     = {math/0012166},
  issn       = {0020-9910,1432-1297},
  number     = {2},
  pages      = {305--329},
  url        = {https://doi.org/10.1007/s00222-002-0270-7},
  volume     = {152},
  eprinttype = {arxiv},
  fjournal   = {Inventiones Mathematicae},
  mrclass    = {14C05 (14J28)},
  mrnumber   = {1974889},
  mrreviewer = {Roy\ M.\ Skjelnes},
}

@Article{carlsson_exts,
  author        = {Carlsson, Erik and Okounkov, Andrei},
  journal       = {Duke Math. J. 161, no. 9 (2012), 1797-1815},
  title         = {Exts and Vertex Operators},
  year          = {2012},
  date          = {2008-01-16},
  doi           = {10.1215/00127094-1593380},
  eprint        = {0801.2565},
  issn          = {0012-7094},
  number        = {9},
  volume        = {161},
  archiveprefix = {arXiv},
  copyright     = {arXiv.org perpetual, non-exclusive license},
  month         = jun,
  publisher     = {Duke University Press},
}

@Article{li_hilbert,
  author     = {Li, Wei-Ping and Qin, Zhenbo and Wang, Weiqiang},
  journal    = {Intern. Math. Res. Notices 27 (2002) 1427--1456.},
  title      = {Hilbert schemes and {W} algebras},
  year       = {2001},
  eprint     = {math/0111047},
  eprinttype = {arxiv},
}

@Article{mason_torus,
  author        = {Mason, Geoffrey and Tuite, Michael P.},
  journal       = {Commun.Math.Phys. 235 (2003) 47-68},
  title         = {Torus Chiral n-Point Functions for Free Boson and Lattice Vertex Operator Algebras},
  year          = {2002},
  date          = {2002-04-26},
  doi           = {10.1007/s00220-002-0772-6},
  eprint        = {math/0204323},
  issn          = {1432-0916},
  number        = {1},
  pages         = {47--68},
  volume        = {235},
  archiveprefix = {arXiv},
  copyright     = {Assumed arXiv.org perpetual, non-exclusive license to distribute this article for submissions made before January 2004},
  month         = Apr,
  primaryclass  = {math.QA},
  publisher     = {Springer Science and Business Media LLC},
}

@Article{alhwaimel_equivariant,
  author        = {Alhwaimel, Mazen M. and Qin, Zhenbo},
  journal       = {Pure Appl. Math. Q.},
  title         = {Equivariant {C}hern character operators and {O}kounkov's conjecture},
  year          = {2026},
  doi           = {10.4310/pamq.260610205021},
  eprint        = {2505.14626},
  issn          = {1558-8599,1558-8602},
  number        = {3},
  pages         = {1075--1113},
  url           = {https://doi.org/10.4310/pamq.260610205021},
  volume        = {22},
  MRCLASS       = {14C05 (05E10 11B65 11M32 17B69)},
  archiveprefix = {arXiv},
  fjournal      = {Pure and Applied Mathematics Quarterly},
  mrnumber      = {5106144},
}

@Misc{alekseev_qins,
  author        = {Alekseev, Victor and Chakravarty, Avik and Choi, Daebeom and Xu, Shengjing},
  title         = {Qin's quasimodularity conjecture for {H}ilbert schemes of points},
  year          = {2026},
  archiveprefix = {arXiv},
  eprint        = {2609.33884},
}

@Article{goujard_counting,
  author     = {Goujard, Elise and M\"{o}ller, Martin},
  journal    = {J. Eur. Math. Soc. (JEMS)},
  title      = {Counting {F}eynman-like graphs: quasimodularity and {S}iegel-{V}eech weight},
  year       = {2020},
  issn       = {1435-9855,1435-9863},
  number     = {2},
  pages      = {365--412},
  volume     = {22},
  MRCLASS    = {05E10 (11F11 14J33 14T15 32G15)},
  MRREVIEWER = {Ayberk\ Zeytin},
  doi        = {10.4171/jems/924},
  fjournal   = {Journal of the European Mathematical Society (JEMS)},
  mrnumber   = {4049220},
  url        = {https://doi.org/10.4171/jems/924},
}

\end{document}